\documentclass[11pt,letterpaper]{amsart}

\usepackage[usenames,dvipsnames]{color}
\usepackage{amsthm,amsfonts,amssymb,amsmath,amsxtra, appendix}
\usepackage[all]{xy}
\SelectTips{cm}{}
\usepackage{xr-hyper}
\usepackage{verbatim}
\usepackage[margin=1.25in]{geometry}
\usepackage{mathrsfs}
\usepackage{diagbox}
\usepackage{enumerate}
\usepackage[shortlabels]{enumitem}
\usepackage{tikz}
\usepackage{tkz-euclide}
\usepackage{setspace} 
\usepackage{hyperref}
\usepackage{csquotes}
\usepackage{wrapfig}
\newcommand{\remind}[1]{{\bf ** #1 **}}

\RequirePackage{xspace}
\RequirePackage{etoolbox}
\RequirePackage{varwidth}
\RequirePackage{enumitem}
\RequirePackage{tensor}
\RequirePackage{mathtools}
\RequirePackage{longtable}
\RequirePackage{multirow}

\usepackage{tikz}
\usepackage{tikz-cd}
\usepackage{csquotes}

\usepackage[url=false,doi=false,isbn=false,sorting=nyt,giveninits=true,maxbibnames=4]{biblatex}
\def\ge{\geqslant}
\def\le{\leqslant}
\def\a{\alpha}
\def\b{\beta}
\def\g{\gamma}
\def\G{\Gamma}

\def\D{\Delta}
\def\L{\Lambda}

\def\s{\sigma}
\def\t{\tau}
\def\th{\theta}

\def\l{\lambda}

\def\i{^{-1}}

\def\pr{\mathrm{pr}}
\def\aff{\mathrm{aff}}

\def\QBG{\mathrm{QBG}}

\def\<{\langle}
\def\>{\rangle}

\newcommand{\bG}{\mathbf G}

\newcommand{{\BG}}{\ensuremath{\mathbb {G}}\xspace}

\newcommand{{\BK}}{\ensuremath{\mathbb {K}}\xspace}

\newcommand{\BS}{\ensuremath{\mathbb {S}}\xspace}

\newcommand{\BZ}{\ensuremath{\mathbb {Z}}\xspace}

\newcommand{\CC}{\ensuremath{\mathcal {C}}\xspace}

\newcommand{\CR}{\ensuremath{\mathcal {R}}\xspace}

\newcommand{\Ad}{{\mathrm{Ad}}}

\DeclareMathOperator{\Adm}{Adm}

\DeclareMathOperator{\Gal}{Gal}
\newcommand{\GL}{\mathrm{GL}}

\newcommand{\id}{\ensuremath{\mathrm{id}}\xspace}

\DeclareMathOperator{\Cox}{Cox}

\newcommand{\wt}{\mathrm{wt}}

\def\tW{\tilde W}
\def\ta{\tilde \a}
\def\tb{\tilde \b}

\DeclareMathOperator{\supp}{supp}

\newtheorem{theorem}{Theorem}
\newtheorem{alphatheorem}{Theorem}
\newtheorem{proposition}[theorem]{Proposition}
\newtheorem{lemma}[theorem]{Lemma}
\newtheorem {conjecture}[theorem]{Conjecture}
\newtheorem{corollary}[theorem]{Corollary}

\theoremstyle{definition}
\newtheorem{definition}[theorem]{Definition}
\newtheorem{example}[theorem]{Example}
\newtheorem*{example*}{Example}

\newtheorem{remark}[theorem]{Remark}

\newtheorem*{function*}{Function}

\numberwithin{equation}{section}
\numberwithin{theorem}{section}

\setitemize[0]{leftmargin=*,itemsep=\the\smallskipamount}
\setenumerate[0]{leftmargin=*,itemsep=\the\smallskipamount}

\renewcommand{\to}{%
   \ifbool{@display}{\longrightarrow}{\rightarrow}%
   }
\let\shortmapsto\mapsto
\renewcommand{\mapsto}{%
   \ifbool{@display}{\longmapsto}{\shortmapsto}%
   }
\newlength{\olen}
\newlength{\ulen}
\newlength{\xlen}
\newcommand{\xra}[2][]{%
   \ifbool{@display}%
      {\settowidth{\olen}{$\overset{#2}{\longrightarrow}$}%
       \settowidth{\ulen}{$\underset{#1}{\longrightarrow}$}%
       \settowidth{\xlen}{$\xrightarrow[#1]{#2}$}%
       \ifdimgreater{\olen}{\xlen}%
          {\underset{#1}{\overset{#2}{\longrightarrow}}}%
          {\ifdimgreater{\ulen}{\xlen}%
             {\underset{#1}{\overset{#2}{\longrightarrow}}}
             {\xrightarrow[#1]{#2}}}}%
      {\xrightarrow[#1]{#2}}
   }
\makeatother
\newcommand{\xyra}[2][]{%
   \settowidth{\xlen}{$\xrightarrow[#1]{#2}$}%
   \ifbool{@display}%
      {\settowidth{\olen}{$\overset{#2}{\longrightarrow}$}%
       \settowidth{\ulen}{$\underset{#1}{\longrightarrow}$}%
       \ifdimgreater{\olen}{\xlen}%
          {\mathrel{\xymatrix@M=.12ex@C=3.2ex{\ar[r]^-{#2}_-{#1} &}}}%
          {\ifdimgreater{\ulen}{\xlen}%
             {\mathrel{\xymatrix@M=.12ex@C=3.2ex{\ar[r]^-{#2}_-{#1} &}}}
             {\mathrel{\xymatrix@M=.12ex@C=\the\xlen{\ar[r]^-{#2}_-{#1} &}}}}}%
      {\mathrel{\xymatrix@M=.12ex@C=\the\xlen{\ar[r]^-{#2}_-{#1} &}}}%
   }
\makeatletter
\newcommand{\xla}[2][]{%
   \ifbool{@display}%
      {\settowidth{\olen}{$\overset{#2}{\longleftarrow}$}%
       \settowidth{\ulen}{$\underset{#1}{\longleftarrow}$}%
       \settowidth{\xlen}{$\xleftarrow[#1]{#2}$}%
       \ifdimgreater{\olen}{\xlen}%
          {\underset{#1}{\overset{#2}{\longleftarrow}}}%
          {\ifdimgreater{\ulen}{\xlen}%
             {\underset{#1}{\overset{#2}{\longleftarrow}}}
             {\xleftarrow[#1]{#2}}}}%
      {\xleftarrow[#1]{#2}}
   }
\newcommand{\isoarrow}{%
   \ifbool{@display}{\overset{\sim}{\longrightarrow}}{\xrightarrow\sim}%
   }

\def\le{\leq}
\def\ge{\geq}
\begin{document}

\title[]{Cohen-Macaulayness of Local Models via Shellability of the Admissible Set}

\author[Xuhua He]{Xuhua He}
\address{Department of Mathematics and New Cornerstone Science Laboratory, The University of Hong Kong, Pokfulam, Hong Kong, Hong Kong SAR, China}
\email{xuhuahe@hku.hk}

\author[Felix Schremmer]{Felix Schremmer}
\address{Department of Mathematics and New Cornerstone Science Laboratory, The University of Hong Kong, Pokfulam, Hong Kong, Hong Kong SAR, China}
\email{schremmer@hku.hk}

\author[Qingchao Yu]{Qingchao Yu}
\address{Institute for Advanced Study, Shenzhen University, Nanshan District, Shenzhen, Guangdong, China}
\email{qingchao\_yu@outlook.com}

\thanks{}

\keywords{Local models, Shellability, Admissible set}
\subjclass[2020]{11G25, 20G25}


\begin{abstract}
We prove that for any dominant cocharacter $\mu$ and any parahoric level $K$, the augmented
admissible set $\widehat{\Adm(\mu)^K}$
in the Iwahori–Weyl group is dual EL-shellable. This resolves a conjecture of G\"ortz \cite{Go01} and provides a new proof 
of the Cohen–Macaulay property for the special fibres of local models with parahoric level structure. In particular, the result settles the previously open cases of residue characteristic $2$ and non-reduced root systems. 

This approach is  characteristic-free and intrinsic to the structure of admissible sets. Moreover, our construction yields an explicit shelling, which translates into an inductive, component-by-component building procedure for the special fibre that preserves Cohen–Macaulayness at each step. 

As a consequence, we obtain the Cohen–Macaulayness of many local models of Shimura varieties considered in the literature, most notably those satisfying the He–Pappas–Rapoport \cite{HPR20} description, as well as the local models characterized by Scholze–Weinstein \cite{SW20} and constructed by Anschütz–Gleason–Lourenço–Richarz \cite{AGLR}. Via the usual local model diagram, these results imply the Cohen–Macaulay property for the corresponding integral models of Shimura varieties whenever available. This gives a new proof that the integral models constructed by Kisin–Pappas–Zhou \cite{KPZ24} are Cohen-Macaulay.
\end{abstract}

\maketitle


\section*{Introduction}

\subsection{Local models and their singularities} 
Shimura varieties occupy a central position in the Langlands program as geometric incarnations of automorphic representations and their associated Galois representations. For most arithmetic applications one is forced to work with integral models of Shimura varieties and to understand their geometry at primes of bad reduction. The local structure of these integral models, most importantly the nature of their singularities, is controlled by certain linear-algebraic moduli problems, known as local models. In a seminal solution to the Langlands program of Harris-Taylor \cite{HT01}, the main objects of interest are certain \enquote{simple} Shimura varieties. One critical feature of these particular Shimura varieties is that their integral models are smooth. This puts a spotlight on the question of what the worst possible singularities for general integral models, or equivalently local models, could be.

Local models are projective schemes over a DVR, encoding the \'etale-local structure of Shimura varieties \cite{KP18} and the moduli stacks of shtukas \cite{AH19} with parahoric level structures. Early work by Deligne–Pappas \cite{DP94}, Chai–Norman \cite{CN90, CN92}, and de Jong \cite{dJ93} linked Shimura variety singularities to these models, later formalized and expanded by Rapoport–Zink \cite{RZ96}. More general (though occasionally less explicit) constructions are due to Zhu \cite{Zhu14} and Pappas–Zhu \cite{PZ13} for tamely ramified groups, with extensions by Levin \cite{Le16}, Lourenço \cite{Lo19+}, Fakhruddin-Haines-Lourenço-Richarz \cite{FHLR} and Richarz \cite{Ri16}.

It was pointed out by Pappas-Rapoport-Smithling \cite[\S 2.1]{PRS} that \enquote{the question of Cohen-Macaulayness and normality of local models is a major open problem in the field}. De Jong \cite{dJ93} pioneered the study of local model singularities, later expanded by Faltings \cite{Fa01}, Görtz \cite{Go01}, Pappas-Rapoport-Smithling \cite{PRS} and Kisin \cite{Ki09}. For large residue characteristics, Pappas-Zhu \cite{PZ13} proved flatness with reduced special fibres and showed that each irreducible component of the special fibre is normal and Cohen–Macaulay. In particular, they deduced normality of local models. For small residue characteristics, it was discovered by Haines-Lourenço-Richarz \cite{HLR} that affine Schubert varieties may not be normal and thus the seminormalization is used in subsequent constructions.

Establishing the Cohen-Macaulayness is more challenging. When the special fibre is irreducible, flatness ensures the model is Cohen–Macaulay. This applies to special parahoric levels and a few other cases classified in \cite{HY24}. In general, the special fibre is the union of multiple Schubert varieties in the partial affine flag variety, indexed by the admissible set in the affine Weyl group. G\"ortz \cite{Go01} proposes to attack the question of Cohen-Macaulayness through a combinatorial problem: more precisely, the dual EL-shellability of these admissible sets. He verified this condition for $\mathrm{GL}_n$ with $n \le 6$ using computer-assisted calculations. 

{\bf Progress since has relied on geometric methods}. In \cite{He13}, the first author related the local model associated with unramified groups and minuscule coweights to the De Concini–Procesi compactification of reductive groups, and deduced Cohen–Macaulayness in these cases from the Cohen–Macaulayness of the group compactification.

A major breakthrough was due to Haines-Richarz \cite{HR23}. For parahoric local models in the sense of Pappas–Zhu \cite{PZ13} (and Levin's extension \cite{Le16}), they proved under minimal assumptions that the entire local model is normal with reduced special fibre and, when $p>2$, also Cohen–Macaulay. This established Cohen–Macaulayness in essentially all tamely ramified cases covered by the Kisin–Pappas–(Zhou) integral models \cite{KPZ24}. The equal-characteristic case was handled via the powerful method of Frobenius splittings on the whole local model, and the mixed characteristic case was deduced from the equal characteristic case by the Coherence conjecture proved by X. Zhu \cite{Zhu14}.

Moving beyond tame ramification, Fakhruddin-Haines-Lourenço-Richarz \cite{FHLR} constructed local models for many wildly ramified groups and proved reduced special fibres and Cohen–Macaulayness in broad generality—highlighting a precise exception for odd unitary groups in residue characteristic $2$. Methodologically, they developed parahoric group schemes over two-dimensional bases for wildly ramified groups and used perfect-geometry tools to analyze singularities of the attached Schubert varieties in positive characteristic.

Ansch\"utz-Gleason-Lourenço-Richarz \cite{AGLR} provide a unique projective flat weakly normal scheme, establishing a constructive solution to an influential conjecture of Scholze-Weinstein \cite{SW20}. Excluding certain root systems in small characteristic, they were able to prove Cohen-Macaulayness of the special fibre using a comparison to the seminormalization of Schubert varieties in equal characteristic. Works by Gleason-Lourenço \cite{GL24} and Cass-Lourenço \cite{CL25} further refined their analysis of the special fibre and establish the Cohen-Macaulayness in more cases (as well as reducedness of the special fibre and its normality in all cases).

Despite these substantial advances, Cohen–Macaulayness remained unsolved for residue characteristic $2$ and non-reduced root systems in the previous literature. Moreover, the geometric methods rely on intricate case analysis to handle small characteristics. This state of affairs left Görtz's original combinatorial conjecture \cite{Go01} as an appealing alternative: if one could prove that the admissible set is dual EL-shellable, then Cohen–Macaulayness would follow from a combinatorial argument. We remark that Gleason-Lourenço \cite[Section~3]{GL24} prove Serre's property $(S_2)$ for the special fibre by linking it to a combinatorial property of admissible sets. It is well known that Cohen-Macaulayness would imply $(S_2)$, and similarly the combinatorial property studied in their paper would be a simple consequence of dual EL-shellability.

\subsection{Dual EL-shellability of admissible sets} This paper resolves G\"ortz’s conjecture, establishing dual EL-shellability of admissible sets, and deduces the Cohen–Macaulayness for the special fibres of the local models with parahoric level structure for all reductive groups. This simplified approach circumvents geometric obstacles and handles all residue characteristics and all parahoric level structures uniformly—including the previously problematic cases of $p=2$ and non-reduced root systems. By the flatness, this also implies Cohen-Macaulayness of the entire local model.

The special fibre of a local model with parahoric level $K$ is a union of affine Schubert varieties in a partial affine flag variety, indexed by the $\mu$-admissible set $\Adm(\mu)^K$. This set was introduced by Kottwitz-Rapoport \cite{KR} in order to describe certain stratifications of Shimura varieties, and further studied by Haines-Ngô \cite{HN02}, and others. 

The closure relations of the affine Schubert varieties in the special fibre of the local model are determined by a partial order on $\Adm(\mu)^K$, known as the Bruhat order. This order can be described group-theoretically using the structure of the Iwahori-Weyl group $\tW$ as a quasi Coxeter group. Since affine Schubert varieties are Cohen-Macaulay (after taking seminormalizations when needed), understanding the structure of the poset $\Adm(\mu)^K$ is essential in establishing Cohen-Macaulayness of the special fibre.

Shellability, a foundational concept in combinatorial geometry and poset topology, originated in the study of polyhedral complexes. A simplicial complex is said to be shellable if its maximal faces can be ordered in such a way that each face intersects the union of the previous faces in a pure and codimension-one manner. This property has significant implications for the algebraic properties of the complex, particularly the regularity of the complex and the Cohen-Macaulayness. Applications extend to the theory of total positivity, where shellability underpins the regularity of the totally positive flag varieties, as demonstrated in \cite{GKL,BH}.

EL-shellability generalizes the notion of shellability for posets via labellings on the set of edges (i.e.\ covers): A poset is EL-shellable if its edges can be labelled such that each interval has a unique increasing and lexicographically minimal chain. This allows for a more flexible and powerful framework for studying the combinatorial and topological properties of posets. One significant application of (dual) EL shellability is in the study of the union of Schubert varieties, as in \cite{Go01}. The shellability requires that the poset has a unique minimal and unique maximal element. Thus we work on the augmented admissible set $\widehat{\mathrm{Adm}(\mu)^K}$, 
consisting of the subposet $\Adm(\mu)^K$ (with Bruhat order) together with an added formal top element $\hat 1$.

\subsection{Main results}
In this paper we prove Görtz’s conjecture. Our main result settles it constructively and in full generality.

\begin{alphatheorem}[Theorem \ref{thm:main}]\label{thmA} For any dominant cocharacter $\mu$ and any parahoric level $K$, the augmented admissible set $\widehat{\mathrm{Adm}(\mu)^K}$ is dual EL-shellable. 
\end{alphatheorem}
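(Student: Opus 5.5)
We prove Theorem~\ref{thmA} by constructing an explicit edge labelling of the dual poset of $\widehat{\Adm(\mu)^K}$ and verifying the EL conditions interval by interval, reducing as much as possible to Dyer's reflection-order EL-labelling of Bruhat intervals in the quasi-Coxeter group $\tW$.

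\textbf{Reductions.} First I would reduce to the Iwahori case $K=\emptyset$. Every element of $\Adm(\mu)^K$ has a unique minimal representative in $W_K\backslash \tW/W_K$-type cosets, and projection preserves covers in the appropriate sense. I expect a standard argument (in the style of Björner–Wachs for quotients) to transfer a compatible labelling from $\Adm(\mu)$ to $\Adm(\mu)^K = W_K\Adm(\mu)W_K$, viewed as a set of minimal-length double coset representatives. Second, I would reduce to the case where $\tW$ is an affine Weyl group of an irreducible, possibly non-reduced, root system times a length-zero part $\Omega$. Multiplying by $\tau\in\Omega$ is a poset isomorphism, so we may assume that $\mu$ lies in the coroot lattice.

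\textbf{The labelling.} Fix a reflection order $\prec$ on the affine reflections $T$ of $W_a$. For a cover $x\lessdot y$ in $\Adm(\mu)$ with $y=xt$, label the edge by $t$. For an edge $x\lessdot\hat 1$, where $x=t^{\lambda}$ with $\lambda\in W\mu$ is a maximal element, we need a special label. I would choose the reflection order so that the ordering of the translation elements $t^{\lambda}$ is compatible with the order in which they must be attached, namely a linear extension coming from $\lambda\mapsto$ the Bruhat order on $W\mu$ (the quantum Bruhat graph is the natural bookkeeping tool here). I would then assign to $t^{\lambda}\lessdot\hat 1$ a formal label smaller than every reflection that can label an edge ending at $t^\lambda$, with ties broken via the order on $W\mu$.

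\textbf{Verification.} For an interval $[x,y]$ with $y\ne\hat 1$, it is a Bruhat interval in $\tW$, since $\Adm(\mu)$ is an order ideal. Dyer's theorem then gives a unique increasing chain that is lexicographically first, and dualizing (reversing the reflection order) settles these intervals. The real content is the intervals $[x,\hat 1]$. In the dual, we need a unique chain from $\hat 1$ down to $x$ that is increasing, and it must be lex-minimal. Concretely, among the maximal elements $t^{\lambda}\ge x$, we pick a canonical one $\lambda(x)$ and then take Dyer's chain in $[x,t^{\lambda(x)}]$. We must show that any increasing chain through a different $t^{\lambda'}$ fails to be increasing at the junction. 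This in turn requires that the first edge below $t^{\lambda'}$ toward $x$ carries a label that forces a descent against the special label of $t^{\lambda'}$. This amounts to the shelling condition: $[x,t^{\lambda'}]\cap\bigcup_{\lambda''\text{ earlier}}[x,t^{\lambda''}]$ is generated by codimension-one elements, i.e.\ for each $\lambda'$ not canonical there is a reflection $t$ with $t^{\lambda'}t$ admissible below some earlier $t^{\lambda''}$ and above $x$.

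\textbf{Main obstacle.} The main obstacle is this last step: choosing $\lambda(x)$ and a reflection order so that the codimension-one condition holds uniformly. My first attempt is to define $\lambda(x)$ via the ``reduced'' description of $x=w t^{\nu} w'$ and to take $\lambda(x)$ as the Bruhat-minimal element of $W\mu$ whose translation lies above $x$. A unique such minimum should exist by the Haines–Ngô characterization of $\Adm(\mu)$ via $\mu$-permissibility and a lattice-type property of $\{\lambda : x\le t^\lambda\}$. I would then produce the required reflections by walking along edges of the quantum Bruhat graph from $\lambda'$ toward $\lambda(x)$. The non-reduced and $p$-independent cases require no separate geometry, but the lattice property must be checked for type $BC$ carefully.
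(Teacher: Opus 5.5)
Your global architecture matches the paper's: a canonical maximal translation above $x$, Dyer's chain below it, and a forced descent at the junction for every other maximal element. However, the labelling you actually write down cannot work, and the idea that would make it work is missing.

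\textbf{The augmented labels are placed wrongly.} You make the label of $\hat 1\gtrdot t^{\lambda}$ smaller than every reflection labelling a cover $t^{\lambda}\gtrdot x'$. Then the junction is an ascent below every maximal element. Concatenating with Dyer's increasing chain in $[x,t^{\lambda}]$ gives one increasing maximal chain of $[x,\hat 1]$ for each maximal $t^{\lambda}\geq x$. Already for the minimal element $\tau$, uniqueness fails once there are two maximal elements. Making the labels uniformly larger instead gives no increasing chain at all. This also contradicts your own later demand that the first label below a non-canonical $t^{\lambda'}$ produce a descent.

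Ordering the augmented labels among themselves by the Bruhat order on $W\mu$ is not enough. They must sit strictly inside the reflection order, separating the labels that occur directly below the canonical translation from those that occur below the others. The paper does this with a reflection order on $\Phi^+_{\aff}$ with two properties: it is $K$-compatible, and every type I root (one in $\Phi_K^+$ or in $(\Phi^-\setminus\Phi^-_{K\cap\Delta_0})\times\mathbb Z_{>0}$) precedes every type II root. All $\eta_a$ are inserted between the two classes. The existence of such an order is itself a lemma (Lemma~\ref{lem:ref_ord}). It uses Cellini--Papi for the union of the inversion sets of $w t^{n\lambda}$, followed by Dyer's modifications $\preceq_{\tilde s}$.

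\textbf{The two facts that carry the proof are only gestured at.} Your proposal offers only ``walking along the quantum Bruhat graph'' for both.
\begin{enumerate}
\item The first label of Dyer's chain in $[x,t^{a^K_{\min}(\mu)}]$ is of type II. This is Proposition~\ref{prop:top-two}(3), which describes the coatoms of $\Adm(\mu)^K$ by acute presentations $z_1t^{\lambda}z_2^{-1}$ attached to a single QBG edge $z_1\to z_2$. One also needs that $a^K_{\min}$ remains minimal for that coatom.
\item For every other $a\in\Sigma^{J,K}_x$ there is a cover $t^{a(\mu)}\gtrdot w'\geq x$ with a type I label. Then the lexicographically minimal chain of $[x,t^{a(\mu)}]$, which is also its increasing chain, starts below $\eta_a$. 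This is a statement about labels, not just the codimension-one condition you formulate. The paper builds $w'$ from a minimal $u\in W_J$ with $au\in\Sigma_x$ and splits into two cases.
\begin{itemize}
\item If $\wt(au,y^{-1})>0$, it uses the new down-up shortest paths of Proposition~\ref{prop:downup} and the invariance of $\wt(\cdot,y^{-1})$ under $W_{K\cap\Delta_0}$.
\item If $\wt(au,y^{-1})=0$, it uses a Deodhar lift and Lenart's lemma.
\end{itemize}
\end{enumerate}

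\textbf{The canonical element is also only conjectured.} The paper obtains $a^K_{\min}$ as the unique element of $\pr(W_K)z_{\min}W_J\cap W^{J,K}$. Here $z_{\min}$ comes from \cite[Corollary 3.4]{HY24}, combined with the QBG criterion for the Bruhat order (Theorem~\ref{thm:Felix}, Lemma~\ref{lem:min2}).

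\textbf{The reduction to $K=\emptyset$ is not a standard argument.} Quotient results of Bj\"orner--Wachs and Dyer handle intervals $[w,w']^K$, not the augmented top. Both the canonical element $a^K_{\min}$ and the type I/II split depend on $K$, and $a^K_{\min}$ generally differs from the Iwahori minimum. So the parahoric case has to be treated directly. Also, $\Adm(\mu)^K=\Adm(\mu)\cap\tW^K$ consists of minimal representatives of $\tW/W_K$, not of double cosets.
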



Via the general theory of shellable posets \cite{Bj, Go01}, Theorem \ref{thmA} translates directly into a uniform geometric result for local models
--—thereby subsuming and extending the known cases. This includes the local models whose existence was conjectured by Scholze-Weinstein \cite[Conjecture 21.4.1.]{SW20} (who gave a unique characterization) and constructed by Anschütz-Gleason-Lourenço-Richarz \cite{AGLR}. Via the usual local model diagram, these results imply the Cohen-Macaulay property for the corresponding integral models of Shimura varieties whenever available. The most important class of such integral models are the ones constructed by Kisin-Pappas-Zhou \cite{KPZ24}.

\begin{alphatheorem}[Theorem \ref{thm:CM}, Proposition \ref{prop:SWisCM}, Corollary \ref{cor:KPZisCM}]\label{thmB}
    \begin{enumerate}
        \item Any local model satisfying the characterization of He-Pappas-Rapoport \cite[Conjecture~2.1.3]{HPR20} is Cohen–Macaulay.
        \item The local models characterized by Scholze-Weinstein \cite{SW20} and constructed by Anschütz-Gleason-Lourenço-Richarz \cite{AGLR} are Cohen-Macaulay.
        \item The integral models constructed by Kisin-Pappas-Zhou \cite{KPZ24} are Cohen-Macaulay.
    \end{enumerate}
\end{alphatheorem}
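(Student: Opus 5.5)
Theorem B is deduced from Theorem A together with the standard machinery that turns shellability of a union of Schubert varieties into Cohen--Macaulayness, as in Görtz \cite{Go01}. Throughout, let $M$ be a local model over $\mathcal O_F$ with generic fibre $M_\eta$ and special fibre $M_s$.

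\textbf{Step 1: reduce to the special fibre.} In each of the three settings, $M$ is flat and projective over $\mathcal O_F$, and its generic fibre is smooth (a flag variety, or a union of such over a finite extension). Flatness means the uniformizer is a nonzerodivisor on $\mathcal O_M$. Hence if $M_s$ is Cohen--Macaulay, so is $M$: a regular sequence in a local ring of $M_s$ lifts, after adjoining the uniformizer, to a regular sequence of the correct length in the corresponding local ring of $M$.

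\textbf{Step 2: identify $M_s$.} Under the He--Pappas--Rapoport characterization \cite[Conjecture~2.1.3]{HPR20}, $M_s$ is reduced and equals the union
\[
\bigcup_{w\in \Adm(\mu)^K} S_w
\]
of (seminormalized, where needed) affine Schubert varieties $S_w$ in the partial affine flag variety. The same description holds for the models of \cite{SW20,AGLR}, using the results of \cite{AGLR,GL24} on reducedness of the special fibre and its identification with the $\mu$-admissible locus. Thus in all cases $M_s$ is a reduced union of Schubert varieties indexed by $\Adm(\mu)^K$.

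\textbf{Step 3: Görtz's criterion.} The core of the argument is the following criterion. Let $X=\bigcup_{w\in A} S_w$, where $A$ is an order ideal of $\tW^K$. Suppose each $S_w$ is Cohen--Macaulay, and each scheme-theoretic intersection $S_w\cap S_{w'}$ is reduced and equal to the union of the $S_v$ with $v\le w,w'$. Then $X$ is Cohen--Macaulay provided $\hat A$ is dual EL-shellable; in fact dual CL-shellability suffices.

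The proof is an induction along the shelling order $w_1,\dots,w_r$ of the maximal elements. Put $X_j=S_{w_1}\cup\dots\cup S_{w_j}$. Dual shellability forces $X_{j-1}\cap S_{w_j}$ to be a union of Schubert divisors of $S_{w_j}$. This intersection is itself indexed by a subposet whose augmentation is again shellable, because links and intervals in a shellable poset are shellable. By induction on dimension it is Cohen--Macaulay of codimension one. Then the exact sequence
\[
0\to \mathcal O_{X_j}\to \mathcal O_{X_{j-1}}\oplus\mathcal O_{S_{w_j}}\to \mathcal O_{X_{j-1}\cap S_{w_j}}\to 0
\]
and the depth lemma give that $X_j$ is Cohen--Macaulay.

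The hypotheses of the criterion are inputs from the literature:
\begin{itemize}
\item Cohen--Macaulayness of the (seminormalized) Schubert varieties, and
\item the scheme-theoretic intersection and union properties, from Frobenius splitting (or compatible splitting of the seminormalizations) in \cite{FHLR,AGLR,CL25}.
\end{itemize}
Applying Theorem \ref{thmA} with $A=\Adm(\mu)^K$ then proves (1) and (2).

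\textbf{Step 4: integral models.} For (3), use the local model diagram of \cite{KPZ24}. It gives, étale locally, an isomorphism between the Kisin--Pappas--Zhou integral model and the corresponding \cite{SW20}/\cite{AGLR} local model. Cohen--Macaulayness is étale local, so (3) follows from (2).

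\textbf{Main obstacle.} The hardest step is verifying the intersection hypotheses in Step 3 uniformly, including residue characteristic $2$ and non-reduced root systems. These require reducedness of unions and intersections of seminormalized Schubert varieties. My first attempt would be to rely on the characteristic-free compatible Frobenius splitting of the perfect or seminormalized affine flag variety, in the form established in \cite{AGLR,CL25}.
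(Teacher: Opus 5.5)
Your proposal is correct and follows essentially the paper's route: Theorem~\ref{thmA} together with Bj\"orner--Wachs recursive coatom orderings (Proposition~\ref{prop:dualELImpliesCM}) feeds G\"ortz's gluing criterion \cite[Proposition~4.24]{Go01} for the union of Schubert varieties, flatness lifts Cohen--Macaulayness from the special fibre to the local model, \cite{AGLR,CL25} identify the special fibre in the Scholze--Weinstein case, and the \'etale-local comparison of \cite{KPZ24} gives (3). There is one slip in your Step~3, in two parts. First, the poset indexing $X_{j-1}\cap S_{w_j}$ is a union of lower intervals $\bigcup_{k\le m}[\hat 0,y_k]$, not an interval or a link, so its shellability comes instead from $y_1,\dots,y_m$ being an initial segment of a recursive coatom ordering of $[\hat 0,w_j]$ (as in the proof of Proposition~\ref{prop:dualELImpliesCM}). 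Second, your exact sequence needs the scheme-theoretic intersection $X_{j-1}\cap S_{w_j}$ itself to be reduced, not merely the pairwise intersections $S_{w_i}\cap S_{w_j}$; the compatible splitting you propose does provide this.
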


The geometric ingredient we need in our proof is the Cohen-Macaulayness of single Schubert varieties. In type A, this result may be proved using a shellability argument \cite{KM05}, but it is not known how to generalize this approach to other types. Cohen-Macaulayness of Schubert varieties may be proved in general using Frobenius splittings.

A notable feature of our approach is that the explicit shelling yields an \emph{explicit recursive construction} of the special fibre: one may add irreducible components one by one in the order dictated by the shelling, preserving the Cohen–Macaulay property at each step. This provides not only a proof of Cohen-Macaulayness, but also a transparent combinatorial blueprint for the geometry of the special fibre. This technique is new even for split group. The possible order for this iterative procedure is detailed in Theorem~\ref{thm:CMConstructive}.

We emphasize that dual EL-shellability implies Cohen–Macaulayness, but the converse implication is not always true (see \cite[p. 183]{Bj}).


\subsection{Methods and Ingredients}

Our proof of Theorem~\ref{thmA} utilizes recent developments regarding structure of admissible sets and affine Weyl groups, as well as classical methods from the representation theory of Hecke algebras.
\begin{itemize}
\item \textbf{Construction of the dual EL-labeling (Section~\ref{sec:2}).} Here, we have to construct a suitable order, combining the following two ingredients:
\begin{enumerate}
\item \textbf{Reflection orders} were introduced by Dyer \cite{Dyer93} based on the theory of Kazhdan-Lusztig polynomials, which play a ubiquitous role in representation theory. A reflection order is a total order on the set of roots of a Coxeter group which is very well behaved for studying chains of Bruhat covers. For our paper, we introduce the notions of Separatedness and $K$-compatibility of reflection orders for the Iwahori-Weyl group, which are well-suited for studying admissible sets for any parahoric level (Lemma~\ref{lem:ref_ord}).
\item \textbf{Component order.} A priori, there is no natural order among the maximal elements of $\Adm(\mu)^K$, which are translation elements of the form $t^{a(\mu)}$. We introduce an order among these translation elements, i.e., the irreducible components of the special fibre of the local model, using the Bruhat order on the elements $a$ of the finite Weyl group $W_0$. This is the order in which we can assemble the special fibre in a component-wise manner, preserving Cohen-Macaulayness at every step. A similar ordering is also used for the irreducible components in level-changing maps in a forthcoming work of Görtz-He-Rapoport \cite{GHR2} on integral Hecke correspondences.
\end{enumerate}
\item
\textbf{Proof of the dual EL-shellability (Section~\ref{sec:4}).}  In order to establish that the construction satisfies the definition of a dual EL-labeling, we use the following two ingredients.
\begin{enumerate}
\item \textbf{The quantum Bruhat graph} was introduced by Brenti-Fomin-Postnikov \cite{BFP} as a particular solution of the quantum Yang-Baxter equations, and is motivated by the quantum Monk formula for the quantum cohomology of flag varieties. The link to the Bruhat order of affine Weyl groups was first described by Lam-Shimozono \cite{LS10}, and has since become a crucial tool in the study of admissible sets. Our paper combines recent developments on the quantum Bruhat graph with a novel construction of particular paths in it (Proposition~\ref{prop:downup}). Our new result generalizes previous work done by Mili\'cevi\'c-Viehmann \cite{MV20} in the context of the so-called \emph{cordial} affine Deligne-Lusztig varieties.
\item \textbf{Acute presentations} are very useful and versatile descriptions of elements of the Iwahori-Weyl group, which we introduce in this work. These presentations are particularly well adapted to the geometry of alcoves in the Bruhat-Tits building, the length function on $\tW$ and its Bruhat order. This notion unifies previous approaches, such as the notion of acute cones of Haines-Ngô and the notion of length positivity by the second author.
\end{enumerate}
\end{itemize}

Since the entire argument can be formulated in the general language of root data and affine Weyl groups, our proof does not require special care of particular root systems or ground fields at any point. We therefore provide a uniform proof of the Cohen-Macaulay property across all cases.

This combination of tools yields a proof that differs fundamentally from earlier geometric approaches. Our approach is  characteristic-free and intrinsic to the structure of admissible sets. The result is a uniform proof that works for all $p$, all parahoric levels, and all root systems—including the previously problematic cases of $p=2$ and non-reduced types.

\subsection{Further Directions}

We expect that the results and approach in this paper find applications beyond local models. 

The shellability property of Weyl groups plays a crucial role in the topological properties of totally nonnegative Schubert varieties (see \cite{GKL, BH}). We expect that the shellabilility of the admissible sets is likely to have implications in the study of total positivity on global Schubert varieties \cite{Zhu, HYZ}. 

We also propose the following conjectures related to the basic loci of Coxeter type \cite{GH15, GHN3}. 

Inside $\Adm(\mu)$, there is an interesting subset $\Cox(\mu)^K$ consisting of certain Coxeter elements. The poset $\Cox(\mu)^K$ is the index set of the EKOR-strata for basic loci of Shimura varieties associated with Shimura data of Coxeter type. We propose the following conjecture.
\begin{conjecture}
The poset $\widehat{\Cox(\mu)^K}=\Cox(\mu)^K \sqcup \{\hat 1\}$ is dual EL-shellable.
\end{conjecture}

The confirmation of this conjecture, together with the Cohen-Macaulayness of single EKOR strata, would imply the Cohen–Macaulayness of the special fibre of basic loci of Coxeter type, extending the reach of our methods to finer stratifications.

{\bf Acknowledgements: } We thank Thomas Haines for valuable suggestions and comments, and Dinakar Muthiah for helpful discussions on reflection orders. XH and FS are partially supported by the New Cornerstone Science Foundation through the New Cornerstone Investigator Program and the Xplorer Prize, and by Hong Kong RGC grant 14300122, both awarded to XH. QY is partially supported by the National Natural Science Foundation of China (grant no. 12501018).

\section{Preliminary}\label{sec:prelim}

\subsection{Shellability}\label{sec:1.1}
Let $(P,\le)$ be a finite poset (partially ordered set). For $w, w' \in P$, write $w'\gtrdot w$ if $w'$ covers $w$ (i.e., $w' > w$ and no $z$ satisfies $w' > z > w$). Denote by $\mathscr{E}(P)$ the set of covering relations (edges).

Let $w,w' \in P$. Define $[w,w'] = \{z\in P\mid w\le z\le w'\}$ and $[w,w')=\{z\in P\mid w\le z < w'\}$. A \textit{downward chain} from $w'$ to $w$ in $P$ of \textit{length} $r$ is a sequence of elements $w'>  w_{r-1} > \ldots > w_1 > w$ in $P$. In this paper, all chains are downward chains. We say a chain is \textit{maximal}, if it is not a subchain of any other chain. 

We say that a finite poset $P$ is \textit{pure}, if all maximal chains in $P$ have the same length. We call this common length the \emph{length} of $P$. If moreover, $P$ contains a unique minimal element $\hat{0}$ and a unique maximal element $\hat{1}$, we say that $P$ is a \textit{graded} poset.

\begin{definition}
    Let $P$ be a graded poset. An {\it edge labeling} of $P$ is a map $\eta : \mathscr{E}(P) \to (\Lambda, \preceq)$, where $(\Lambda, \preceq)$ is a totally ordered set.
\end{definition} 

Given a maximal chain $c : \hat{1} = w_r > w_{r-1} > \dots > w_0 = \hat{0}$, we obtain a word $\eta(c) = (\eta(w_r > w_{r-1}), \dots, \eta(w_1 > w_0))$ in $\Lambda^r$. We say $c$ is {\it label-increasing} if this word is weakly increasing. Two chains are compared lexicographically using the total order on $\Lambda$.

The following definition is due to Bj\"orner–Wachs.
\begin{definition}[{\cite[Definition 4.23]{BW83}}]\label{def:EL}
    An edge labeling $\eta: \mathscr{E}(P) \to (\L,\preceq)$ is said to be a \emph{dual EL-labeling}, if for any interval $[w, w']$ of $P$,
    \begin{enumerate}
        \item There exists a unique label-increasing maximal chain in $[w,w']$;
        \item This chain is lexicographically smallest among all maximal chains in $[w,w']$. 
    \end{enumerate}
If $P$ admits a dual EL-labeling, we say that $P$ is \textit{dual EL-shellable}.
\end{definition}

If $P$ is dual EL-shellable, then its dual poset $P^\vee$ (reversing the order) is said to be EL-shellable. 


\subsection{$N$-Cohen-Macaulay posets}\label{sec:1.2}
G\"ortz introduced the following recursive notion to bridge combinatorial and algebro-geometric properties related to posets.

\begin{definition}[{\cite[Definition 4.23]{Go01}}]\label{def:CM}
Let $Q$ be a pure poset with a unique minimal element $\hat{0}$. For any $z\in Q$, the length of $z$ is the length of any maximal chain in $[\hat{0},z]$. Define $N$-Cohen–Macaulayness recursively:
\begin{enumerate}
    \item If $Q$ has a unique maximal element $x$, then $Q$ is $\ell(x)$-Cohen-Macaulay. 
    \item If $Q$ has maximal elements $x_1, \dots, x_k$ ($k \geq 2$), then $Q$ is $N$-Cohen-Macaulay if all $x_i$ have the same length $N$, and (after reordering) for each $j = 2, \dots, k$, the intersection
  \[
  [\hat{0}, x_j] \cap \bigcup_{i<j} [\hat{0}, x_i]
  \]
  is $(N-1)$-Cohen-Macaulay.
\end{enumerate}
\end{definition}


We use the (combinatorial) $N$-Cohen-Macaulayness of admissible sets to establish the (geometric) Cohen-Macaulayness of the special fibres of local models (see Theorem \ref{thm:CM}). However, since its definition is recursive, it is difficult to verify. It turns out that the notion of dual EL-shellability is more accessible. We now relate these two notions.



\begin{proposition}\label{prop:dualELImpliesCM}
Let $P$ be a graded poset $P$ of length $N+1$. If $P$ is dual EL-shellable, then $P-\{\hat{1}\}$ is $N$-Cohen-Macaulay.
\end{proposition}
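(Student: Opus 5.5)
Since $P$ is dual EL-shellable, every interval of $P$ is again graded and dual EL-shellable with the restricted labeling. I will prove a slightly stronger statement by induction on the number of coatoms (elements covered by $\hat 1$). The statement: if $P$ is graded of length $N+1$ with a dual EL-labeling $\eta$, then $Q=P-\{\hat 1\}$ is $N$-Cohen--Macaulay. Moreover, I will produce an explicit ordering of the maximal elements of $Q$: the coatoms $x_1,\dots,x_k$ of $P$, sorted so that the labels $\eta(\hat 1\gtrdot x_j)$ are weakly increasing. Among coatoms with equal top label, any order will do.

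\textbf{Basic checks.} $Q$ is pure with unique minimum $\hat 0$, and all $x_i$ have length $N$, since $P$ is graded. If $k=1$, then $Q=[\hat 0,x_1]$ and there is nothing to show.

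\textbf{Key claim.} For $j\ge 2$, set $R_j=[\hat 0,x_j]\cap\bigcup_{i<j}[\hat 0,x_i]$. The claim is that $R_j$ is a union of intervals $[\hat 0,y]$ where $y$ runs over coatoms of $[\hat 0,x_j]$ (elements of length $N-1$). Concretely, I show that every $z\in R_j$ lies below some $y\lessdot x_j$ with $y\in R_j$. Take $z\le x_j$ and $z\le x_i$ with $i<j$, and consider the interval $[z,\hat 1]$.
\begin{itemize}
\item Its unique increasing chain is lexicographically minimal, so its first step goes to the coatom of $[z,\hat 1]$ with the smallest top label (ties handled via the full lex comparison).
\item If the chain $\hat 1>x_j>\dots>z$ built from the increasing chain of $[z,x_j]$ were itself increasing and lex-minimal, it would beat the chain through $x_i$. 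The sorted order together with lex-minimality rules this out, and forces the unique increasing chain of $[z,\hat 1]$ to pass through some $x_i$ with $i<j$.
\item Standard Björner--Wachs reasoning then gives a descent at $x_j$ in the lex-first chain of $[z,\hat 1]$ through $x_j$. That is, in $[z,\hat 1]$ restricted to chains through $x_j$, the edges $\hat1\gtrdot x_j\gtrdot y$ form a non-increasing pair.
\item Apply the dual EL property to the rank-2 interval $[y,\hat 1]$. Its increasing chain passes through some $x_{i'}$ with a label smaller than $\eta(\hat1\gtrdot x_j)$, hence $i'<j$. So $y\in R_j$ and $z\le y\lessdot x_j$.
\end{itemize}
This step, managing tied top labels and the lexicographic comparison, is the main obstacle. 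I would first try to mimic Björner--Wachs' proof that EL-shellability implies shellability of the order complex, translated to the "recursive atoms/coatoms" framework.

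\textbf{Conclusion.} Let $P_j$ be the poset $R_j\cup\{\hat 1_j\}$, with a new top element adjoined. It is graded of length $N$, and I need it to be dual EL-shellable in order to apply induction (on $N$). Here I use the Björner--Wachs result that dual EL-shellable implies dual CL-shellable, which implies "recursive coatom ordering". The recursive coatom ordering property is exactly of the form above and is inherited by the lower ideal generated by the earlier coatoms of $[\hat 0,x_j]$. So I will run the whole induction with "admits a recursive coatom ordering" as the hypothesis, which is weaker than and implied by dual EL-shellability \cite{BW83}. With this hypothesis, $R_j$ is generated by an initial segment of the coatom ordering of $[\hat 0,x_j]$, hence itself has a recursive coatom ordering. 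The induction on $N$ then gives that $R_j$ is $(N-1)$-Cohen--Macaulay, completing Definition~\ref{def:CM}(2).
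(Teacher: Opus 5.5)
Your final argument is the paper's proof. You invoke Bj\"orner--Wachs \cite{BW83} to get a recursive coatom ordering and induct on $N$ with that weaker property as the hypothesis, which is also what the paper's induction implicitly relies on; you then note that $\{x_j\}\sqcup R_j$ inherits a recursive coatom ordering from the coatoms of $[\hat 0,x_j]$ covered by some earlier $x_i$, which come first in that ordering. Once you switch to this hypothesis, your label-sorted ``key claim'' and its loosely argued tie-handling are no longer needed, since condition (ii) of a recursive coatom ordering gives that statement directly.
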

\begin{proof}
We first introduce a few more notions. A \emph{coatom} of a graded poset $R$ is an element covered by the maximal element. We say that $R$ \emph{admits a recursive coatom ordering} if the length of $R$ is $1$ or if the length of $R$ is greater than $1$ and there is an ordering  $x_1, x_2, \ldots, x_t $ of the coatoms of $R$ which satisfies:

\begin{enumerate}[(i)]
\item For any $ j = 1, 2, \ldots, t $, $ [\hat{0}_R,x_j] $ admits a recursive coatom ordering in which the coatoms of $[\hat{0}_R,x_j]$ that come first in the ordering are those that are covered by some $ x_i $ where $i < j$. Here, $\hat{0}_R$ is the minimal element of $R$.

\item If $w <x_i,x_j$ with $i<j$, then there exists $ k < j$ and an element $w' $ such that $w\le w'$ and $w' $ is covered by $x_k$ and $x_j$.

\end{enumerate}

We prove the Proposition by induction on $N$. If $N=1$, the statement is trivial. Suppose $N>1$. By \cite[Theorem 3.2]{BW83}, since $P$ is dual EL-shellable, $P$ admits a recursive coatom ordering. Let $x_1,x_2,\ldots,x_t$ be the ordering of the coatoms of $P$ as in the above definition. Let $j>1$. By conditions (i) (ii), the set $[\hat{0},x_j]$ admits a recursive coatom ordering $y_1,y_2,\ldots,y_m,y_{m+1},\ldots, y_{\ell}$ such that the maximal elements of $ [\hat{0},x_j] \cap \left(\cup_{i<j}[\hat{0},x_i]\right)$ are $y_1,y_2,\ldots,y_m$. The recursive coatom ordering of $[\hat{0},x_j]$ restricts to a recursive coatom ordering of $\{x_j\} \sqcup  \left( [\hat{0},x_j] \cap \left(\cup_{i<j}[\hat{0},x_i]\right) \right)$. By induction hypothesis, the set $  [\hat{0},x_j] \cap \left(\cup_{i<j}[\hat{0},x_i]\right)$ is $(N-1)$-Cohen-Macaulay. Hence, $P-\{\hat{1}\}$ is $N$-Cohen-Macaulay by definition \ref{def:CM}.
\end{proof}


\subsection{The Affine Weyl Group}\label{sec:1.3}
Let $\CR = (\Phi, X^*,\Phi^{\vee},X_* ,\D_0)$ be a based root datum, so that $X^\ast$ and $X_\ast$ come with a natural pairing $\langle \cdot,\cdot\rangle$. Let $W_0$ be its Weyl group and $\BS_0 = \{s_\alpha\mid \alpha\in \Delta_0\}$ be the set of simple reflections of $W_0$. Let 
$$\tW = X_*\rtimes W_0 = \{ t^{\l}z \mid \l \in X_*, z\in W_0\}$$ be the extended affine Weyl group. 

Let $\Phi = \Phi^+\sqcup \Phi^-$ be the sets of positive and negative roots respectively. The set of affine roots is defined as $\Phi_{\aff} = \Phi \times \BZ$. The set of positive affine roots is defined as $\Phi_{\aff}^+ = (\Phi^+ \times \BZ_{\ge0} )\sqcup (\Phi^- \times \BZ_{\ge1})$, and its complement are the negative affine roots. The \emph{simple affine roots} are
\begin{align*}
    \Delta_{\aff} =\{(\alpha,0)\mid \alpha \in \D_0\} \bigcup \{(-\theta,1)\mid \theta\text{ highest root of an irred.\ component of }\CR\}.
\end{align*}
By convention, we view $\Phi$ as a subset of $\Phi_{\aff}$ via the embedding $\a \to (\a,0)$. In particular, $\Delta_0\subseteq \Delta_{\aff}$.

For any $w = t^{\l}z \in   \tW$, the action on $\Phi_{\aff}$ is given by $$\tilde{\a} = (\a,k)\in \Phi\times\BZ\mapsto w(\a,k) = (z(\a), k-\<\l,z(\a)\>).$$ The affine reflection corresponding to $\tilde\a \in \Phi_{\aff}^+$ is $s_{\tilde{\a}} = s_{\a} t^{k\a^{\vee}} \in \tW$. Then the set of simple affine reflections is $\tilde{\BS} = \{s_{\tilde\alpha}\mid \tilde \alpha\in \Delta_{\aff}\}$. Then the pair $(\tW, \tilde{\BS})$ is a \emph{quasi-Coxeter group}, It behaves like a Coxeter group except for the possible presence of finitely many length-zero elements. We view $W_0$ as a natural subset of $\tW$, and get $\BS_0\subseteq \tilde{\BS}$ in this way.


\subsection{The admissible set}\label{sec:1.4}
Let $\mu \in X_*^+$ be a (not necessarily minuscule) dominant cocharacter. The $\mu$-admissible set is defined by
$$\Adm(\mu) = \{ w\in \tW \mid w \le t^{ z (\mu)} \text{ for some } z\in W_0\}.$$ 

For any subset $K$ of $\D_{\aff}$, let $W_K$ be the subgroup of $\tW$ generated by $\{s_{\tilde\alpha}\mid \ta \in K\}$ and $\tW^K$ be the set of minimal length coset representatives of $\tW/W_K$.  We say that $K$ is {\it spherical} if $W_K$ is finite. For spherical subset $K$ of $\D_{\aff}$, we define $$\Adm(\mu)^K = \Adm(\mu) \cap \tW^K.$$ By  \cite[Theorem 6.1]{He16} and \cite[Proposition 5.1]{HH17}, $\Adm(\mu)^K=W_K \Adm(\mu) W_K \cap \tW^K$. Geometrically, $\Adm(\mu)^K$ indexes the Schubert cells occurring in the special fibre of the local model with the corresponding parahoric level structure.

Note that $\Adm(\mu)^K$ has a unique minimal element, which we denote by $\tau$. By \cite[Proposition 2.1]{HN17}, the maximal elements in $\Adm(\mu)^K$ are $t^{ \mu'}$ with $\mu' \in W_0(\mu)$ such that $t^{\mu'} \in \tW^K$. For any Bruhat interval $[w,w']$ in $\tW$, denote $[w,w']^K = [w,w']\cap \tW^K$. By \cite[Theorem 2.5.5]{BB}, $[w,w']^K$ is a graded poset. Then $\Adm(\mu)^K = \bigcup_{\mu'} [\tau,t^{\mu'}]^K$ is a graded poset. 

Define $\widehat{\Adm(\mu)^K}=\Adm(\mu)^K \sqcup \{\hat 1\}$, where $\hat{1}$ is the added unique maximal element. It is clear that  $\widehat{\Adm(\mu)^K}$ is a graded poset.

\section{Shellings of the admissible set}\label{sec:2}



We fix a spherical subset $K\subseteq\D_{\aff}$. To prove the dual EL-shellability of $\widehat{\Adm(\mu)^K}$, we construct a suitable edge labeling. We handle the following two types of edges. 

\begin{enumerate}
    \item {\it Interior edges} between elements of $\Adm(\mu)^K$: We use Dyer's theory of reflection orders on affine roots $\Phi_{\aff}^+$.
    \item {\it Augmented edges} $\hat{1} \gtrdot t^{a(\mu)}$: We introduce new labels $\eta_a$ and use the Bruhat order on $a$.
\end{enumerate}

The key part is to choose a suitable total order on $\Phi^+_{\aff}$ that is ``compatible'' with the level $K$ and the Bruhat order of the elements $a \in W_0$. 

\subsection{Reflection order}\label{sec:2.1}
For any covering relation $w' \gtrdot w$ in $\tilde{W}$, there exists a unique positive affine root $\tilde{\alpha}$ such that $w' = w s_{\tilde{\alpha}}$. We label the interior edge $w' \gtrdot w$ by the affine root $\tilde \a$. 


\begin{definition}
    A total order $\preceq$ on $\Phi_{\mathrm{aff}}^+$ is called a {\it reflection order} if for any $\tilde{\alpha}, \tilde{\beta} \in \Phi_{\mathrm{aff}}^+$ and $a, b > 0$ such that $\tilde{\alpha} \prec \tilde{\beta}$ and $a\tilde{\alpha} + b\tilde{\beta} \in \Phi_{\mathrm{aff}}^+$, we have
\[
\tilde{\alpha} \prec a\tilde{\alpha} + b\tilde{\beta} \prec \tilde{\beta}.
\]
\end{definition} 


For our purpose, we also need to take into account the level $K \subseteq \D_{\aff}$. We introduce some notations. Let $\Phi_K$ be the root subsystem of $\Phi_{\aff}$ spanned by affine roots in $K$ and let $\Phi_{K}^+ = \Phi_{K}\cap\Phi_{\aff}^+$. 

The following result was established by Dyer \cite[Proposition 4.3]{Dyer93}, based on the theory of Hecke algebras and $R$-polynomials of Kazhdan-Lusztig \cite{KL79}.

\begin{proposition}\label{prop:Dyer}
Let $K \subseteq \Delta_{\mathrm{aff}}$. Suppose $\preceq$ is a reflection order such that every root in $\Phi_K^+$ precedes every root in $\Phi_{\mathrm{aff}}^+ \setminus \Phi_K^+$. Then the induced labeling on any interval $[w, w']^K \subseteq \tilde{W}^K$ is a dual EL-labeling.
\end{proposition}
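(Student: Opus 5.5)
The plan is to reduce to Dyer's theorem on reflection orders for the full Bruhat order on $\tW$. From there, the parabolic statement follows by projecting chains in $\tW$ to $\tW^K$ and checking that the initial segment $\Phi_K^+$ controls this projection.

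\emph{Step 1 (the case $K=\emptyset$).} For a reflection order $\preceq$, every Bruhat interval $[w,w']$ in $\tW$ has a unique label-increasing maximal chain, and it is lexicographically smallest. I would prove this through the $R$-polynomials of Kazhdan--Lusztig. The key fact is the $\tilde R$-polynomial formula:
\[
\tilde R_{w,w'}(q)=\sum_{\Delta} q^{\ell(\Delta)},
\]
where $\Delta$ runs over Bruhat paths from $w$ to $w'$ whose labels are increasing. I would establish this by induction on $\ell(w')$ via the recursion $\tilde R_{w,w'}=\tilde R_{ws,w's}$ or $\tilde R_{ws,w's}+q\tilde R_{w,w's}$, where $s$ is a simple reflection with $w's<w'$. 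The recursion is matched by taking a reflection order that begins with the simple root of $s$; one then shows that any two reflection orders yield the same count by switching adjacent roots in dihedral reflection subgroups. Comparing coefficients of $q^{\ell(w')-\ell(w)}$ (which equals $1$) gives uniqueness of the increasing maximal chain. Lexicographic minimality then follows by a standard dihedral exchange argument: if the first step of the lex-first chain is not increasing, a rank-two argument inside the reflection subgroup spanned by two consecutive labels replaces it by a smaller one.

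The quasi-Coxeter setting causes no trouble. Length-zero elements normalize $\tilde\BS$, and the Bruhat order on $\tW$ is a disjoint union of translated copies of the affine Coxeter group's Bruhat order. So each interval reduces to an interval in a Coxeter group, with the labels transported by conjugation by a length-zero element.

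\emph{Step 2 (parabolic reduction).} For $[w,w']^K$ with $w,w'\in\tW^K$, a cover $x\gtrdot y$ in $\tW^K$ is again a cover $x=ys_{\tilde\a}$ in $\tW$ (by \cite[Theorem 2.5.5]{BB} the grading is the length), so it carries the same label. In particular every label is outside $\Phi_K^+$, since $y s_{\tilde\a}\in yW_K$ would contradict minimality of coset representatives.

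Existence: take the increasing chain in $[w,w']$ in $\tW$. Its first label lies outside $\Phi_K^+$, otherwise $w's_{\tilde\a}<w'$ with $s_{\tilde\a}\in W_K$, contradicting $w'\in\tW^K$. Since all labels from $\Phi_K^+$ come first in the order and the chain is increasing, no later label lies in $\Phi_K^+$ either. I would then check that every element of this chain lies in $\tW^K$: if some $x$ on it had $xs<x$ with $s\in K$, the lexicographic-minimality property applied to $[w,x]$ would produce a chain starting with a $\Phi_K^+$ label, contradicting the fact that the increasing chain of $[w,x]$ is its lex-smallest chain.

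Uniqueness: an increasing chain in $[w,w']^K$ is an increasing maximal chain in $[w,w']$, so it is unique by Step 1.

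Minimality: given a maximal chain $c$ of $[w,w']^K$, compare it with the chain in $[w,w']$ whose labels agree with those of $c$. Lex-minimality in $\tW$ then gives lex-minimality in $\tW^K$.

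\emph{Main obstacle.} The hardest part is Step 1, namely the $R$-polynomial identity and its independence of the choice of reflection order; this is precisely Dyer's contribution. The delicate point in Step 2 is showing that the increasing chain stays inside $\tW^K$. The first thing I would try there is the argument above: show that an element on the chain leaving $\tW^K$ forces an earlier $\Phi_K^+$ label, using that $\Phi_K^+$ is an initial segment of the order.
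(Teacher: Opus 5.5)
Your proposal is correct, but it is organized differently from the paper. The paper gives no argument: it simply invokes Dyer's Proposition~4.3, which already proves the statement for intervals in a parabolic quotient, under the hypothesis that the reflections of the parabolic subgroup form an initial segment.

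You instead take only the case $K=\emptyset$ as the deep input, namely Dyer's $\tilde R$-polynomial formula and the resulting EL property of reflection orders on full Bruhat intervals of $\tW$. You then derive the parabolic case formally:
\begin{enumerate}
\item Maximal chains of $[w,w']^K$ are maximal chains of $[w,w']$, because $\tW^K$ is graded by $\ell$.
\item The increasing chain of $[w,w']$ has all labels outside $\Phi_K^+$. Its top label cannot lie in $\Phi_K^+$ since $w'\in\tW^K$, and $\Phi_K^+$ is an initial segment.
\item That chain never leaves $\tW^K$, by lex-minimality of its lower segments $[w,x]$.
\end{enumerate}
This makes it clear why $K$-compatibility is exactly the right hypothesis. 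It also reduces the citation to the better-known non-parabolic theorem.

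Two points should be tightened.

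First, in the membership step you must know $w\le xs$ before you can form a chain through $xs$. You do not say why this holds. It is the lifting property, which applies because $ws>w$ (as $w\in\tW^K$ and $s\in W_K$) while $xs<x$.

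Second, your sketch of order-independence in Step~1 via switching adjacent roots in dihedral subgroups is only heuristic for the infinite set $\Phi_{\aff}^+$. At the least you would have to restrict to the finitely many labels occurring in the interval, and it is not clear that the restricted orders are connected by such moves. Step~1 is therefore best presented as a citation of Dyer's theorem, which is no weaker than what the paper itself does.
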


Thus, for intervals contained entirely within $\Adm(\mu)^K$, we can use any reflection order with this \enquote{$K$-compatibility} property. To extend this labeling to the entire augmented poset $\widehat{\Adm(\mu)^K}$, we need to choose our reflection order carefully.


\subsection{Constructing a special reflection order}\label{sec:2.2}
We now construct a reflection order tailored to our needs.  We pay special attention to the set $K\cap \Delta_0$.

\begin{definition}
A positive affine root is called {\it of type I} if it is in $\Phi_{K}^+$ or is in $(\Phi^-\setminus \Phi_{K\cap \Delta_0}^-) \times \mathbb Z_{>0}$. It is called {\it of type II} if it is not of type I. 
\end{definition}

Now we establish the following key lemma. It ensures that our reflection order respects both the parahoric structure (via $K$-compatibility) and the dichotomy between type I and type II roots needed for the augmented edges.

\begin{lemma}\label{lem:ref_ord}
    There exists a reflection order $\preceq$ on $\Phi_{\aff}^+$ satisfying the following two properties:
    \begin{enumerate}
    \item Separation property: Every type I root precedes every type II root.
    \item $K$-compatibility property: Every root in $\Phi_K^+$ precedes every root in $\Phi_{\mathrm{aff}}^+ \setminus \Phi_K^+$.
    \end{enumerate}
\end{lemma}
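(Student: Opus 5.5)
The plan is to build $\preceq$ by concatenating three blocks,
\[
\Phi_K^+ \;\prec\; T\setminus\Phi_K^+ \;\prec\; \Phi_{\aff}^+\setminus T,
\]
where $T$ denotes the set of type I roots. Since $\Phi_K^+\subseteq T$ by definition, any such order satisfies both the separation property and $K$-compatibility. So the whole task is to show that this block decomposition is compatible with \emph{some} reflection order. I would use Dyer's characterization of initial sections: a set $A\subseteq\Phi^+_{\aff}$ is an initial section of a reflection order precisely when $A$ and its complement are both closed. Closed here means closed under positive combinations that are again positive affine roots; such sets are called biclosed. For affine root systems one must also restrict to the appropriate class of biclosed sets, for instance finite ones, cofinite ones, or the ``limit'' ones coming from a total order on $\Phi$. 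Given a nested chain $A_1\subseteq A_2$ of such sets with $A_2\setminus A_1$ closed, one can then concatenate reflection orders block by block. By the defining inequality, a concatenation is a reflection order exactly when every convex combination $a\tilde\alpha+b\tilde\beta$ with $\tilde\alpha,\tilde\beta$ in blocks $i\le j$ lands in a block between $i$ and $j$.

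Step 1 is a direct check that $\Phi_K^+$ is biclosed in $\Phi_{\aff}^+$. It is closed because it is the positive system of the root subsystem $\Phi_K$. Its complement is closed because $\Phi_K$ is a parabolic subsystem: if $\tilde\gamma=a\tilde\alpha+b\tilde\beta\in\Phi_K$ with $\tilde\alpha,\tilde\beta$ positive, then writing everything in terms of simple affine roots forces the supports of $\tilde\alpha$ and $\tilde\beta$ to lie in $K$. This block is finite since $K$ is spherical. Step 2 shows that $T$ is biclosed. Put $J=K\cap\Delta_0$ and write a root as $(\alpha,k)$; the argument is a sign analysis on the finite part $\alpha$ and the level $k$.
\begin{itemize}
\item For closedness of $T$: a positive combination of two roots with $\alpha\in\Phi^-\setminus\Phi_J^-$ and $k>0$ again has $k>0$. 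Its finite part cannot be positive or lie in $\Phi_J$, because $\Phi^+\cup\Phi_J$ is closed (it is a parabolic subsystem), so its complement $\Phi^-\setminus\Phi_J^-$ is closed. The mixed combinations with $\Phi_K^+$ are handled by the same parabolic argument, together with the observation that roots of $\Phi_K$ with $k>0$ and negative finite part outside $\Phi_J$ already lie in $T$.
\item For closedness of the complement: the type II roots are those outside $\Phi_K$ that have either positive finite part with $k\ge0$, or finite part in $\Phi_J^-$ with $k>0$. Their finite parts lie in the closed set $\Phi^+\cup\Phi_J^-$. A sum of two such roots that lands in $\Phi_K$ is excluded by the parabolicity from Step 1.
\end{itemize}

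Step 3 shows that $T\setminus\Phi_K^+$ is closed and that no convex combination of a root in $\Phi_K^+$ and a type II root lands in $T\setminus\Phi_K^+$, so that the middle block is convex in the required sense. Step 4 then produces the reflection order. For $T$, which is infinite with infinite complement, I would not rely on an abstract existence theorem. Instead I would realize $T$ as an initial section explicitly via the standard ``limit'' construction. Choose a total order on $\Phi$ in which the roots of $\Phi^-\setminus\Phi_J^-$ are put first and the roots of $\Phi^+\cup\Phi_J^-$ after them, then refine by level and by ratios, in the spirit of Dyer's and Ito's descriptions of reflection orders on affine root systems. Finally I would permute within the finite block $\Phi_K^+$ so that it comes first; this is legitimate by Step 1 and Step 3.

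I expect Step 4 to be the main obstacle: checking that this explicit infinite order is a genuine reflection order and is compatible with moving the finite block $\Phi_K^+$ to the front. The roots of $\Phi_K$ carrying the affine simple root $(-\theta,1)$ interact with both type I and type II roots, so the convexity verification there is where the case analysis concentrates.
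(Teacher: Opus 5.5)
Your target decomposition $\Phi_K^+\prec T\setminus\Phi_K^+\prec\Phi^+_{\mathrm{aff}}\setminus T$ is the right one, and the closedness checks in Steps~1--2 are essentially fine. However, neither of the two existence statements that carry the lemma is proved.

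\textbf{First gap: existence of an order with initial section $T$.} Knowing that $T$ is biclosed does not by itself give a reflection order with initial section $T$. Step~4 only gestures at a ``limit'' order, without defining it or checking the axiom, as you concede. The paper's key observation makes this step immediate. Take $w$ the longest element of $W_{K\cap\Delta_0}$ and $\lambda$ dominant with stabilizer exactly $K\cap\Delta_0$. Then $(wt^{n\lambda})^{-1}(\alpha,k)=(w^{-1}\alpha,\,k+n\langle\lambda,\alpha\rangle)$. Hence the sets $\{\tilde\beta\in\Phi^+_{\mathrm{aff}}\mid (wt^{n\lambda})^{-1}(\tilde\beta)\in\Phi^-_{\mathrm{aff}}\}$ increase with $n$, and their union is exactly $T$. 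So $T$ is the inversion set of an infinite reduced word, and \cite[Proposition~3.4]{CP98} gives a reflection order with initial section $T$.

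\textbf{Second gap: moving $\Phi_K^+$ to the front.} Your step ``permute within the finite block $\Phi_K^+$'' is not legitimate. Moving $\Phi_K^+$ forward while keeping the relative order of the other roots can break the axiom. Your concatenation criterion (``lands in a block between $i$ and $j$'') misses this, because it omits the within-block conditions. Take $\tilde\alpha\in\Phi_K^+$ and $\tilde\beta\in T\setminus\Phi_K^+$. Any root $a\tilde\alpha+b\tilde\beta$ then lies in $T\setminus\Phi_K^+$ and must precede $\tilde\beta$. An order with initial section $T$, restricted to the middle block, need not satisfy this.

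Concretely, take affine $A_2$ with $K=\{(\alpha_1,0)\}$ and $\theta=\alpha_1+\alpha_2$. There are reflection orders with initial section $T$ in which $(-\theta,1)\prec(\alpha_1,0)$. Indeed, for $\mathrm{GL}_3$ and $\lambda=(1,1,0)$, the element $s_1t^{\lambda}$ has inversion set $\{(-\theta,1),(-\alpha_2,1),(\alpha_1,0)\}$ and a reduced word beginning with $s_0$. In such an order, $(-\alpha_2,1)=(\alpha_1,0)+(-\theta,1)$ is forced to lie between $(-\theta,1)$ and $(\alpha_1,0)$. Moving $(\alpha_1,0)$ alone to the front then violates the axiom.

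The missing idea is that the earlier roots must be reordered as well. The paper does this with Dyer's modification $\preceq_{\tilde s}$ \cite[Proposition~2.5]{Dyer93}:
\begin{itemize}
\item it puts a simple root $\tilde\alpha\in K\cap D(\preceq)$ first;
\item it compares the roots that preceded $\tilde\alpha$ via their images under $\tilde s=s_{\tilde\alpha}$;
\item it leaves everything after $\tilde\alpha$ untouched.
\end{itemize}
The set of roots up to $\tilde\alpha$ is unchanged and consists of type I roots, so separation is preserved. Moreover $\sharp D(\preceq)$ strictly drops, so finitely many such steps give $K$-compatibility.

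\textbf{A false claim in Step~3.} In affine $A_1$ with $K=\{(-\alpha,1)\}$ one has $2(-\alpha,1)+(\alpha,0)=(-\alpha,2)\in T\setminus\Phi_K^+$. Such a landing is harmless under your own criterion, but it shows Step~3 is checking the wrong condition.
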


\begin{proof}
We first construct a reflection orders satisfying (1). 

Let $w$ be the longest element of the subgroup generated by $K \cap \Delta_0$. Let $\l$ be a dominant cocharacter $\lambda$ whose stabilizer in $\Delta_0$ is equal to $K \cap \Delta_0$. Then $w (\l) = \l$. For any $n\ge 0$ and $\tilde \a=(\a, k) \in \Phi^+_{\aff}$, we have $$(wt^{n\lambda})^{-1} (\ta)=(w^{-1} (\a), k+\langle n w (\l), \alpha\rangle)=(w^{-1}(\a), k + \langle n\lambda, \alpha\rangle).$$
Thus, $(wt^{n\lambda})^{-1}(\ta) \in \Phi_{\aff}^-$ if and only if $(\a, k) \in \Phi_{K \cap \Delta_0}^+$ or $\a \in (\Phi^-\setminus \Phi_{K\cap \Delta_0}^-)$ and $k+n\langle \lambda,\alpha\rangle\leq 0$. Hence $\bigcup_{n \ge 0} \{\tilde\alpha\in \Phi_{\aff}^+\mid (wt^{n\lambda})^{-1}(\ta) \in \Phi_{\aff}^-\}$ is exactly the set of all type I positive affine roots. By \cite[Proposition~3.4]{CP98}, there exists a reflection order $\preceq$ satisfying (1).


Suppose the reflection order $\preceq$ does not satisfy (2). Set $$D(\preceq) := \{\tilde\alpha\in \Phi_K^+\mid \text{ there exists } \tilde\beta\in \Phi_{\aff}^+\setminus\Phi_{K}^+\text{ with  }\tilde\beta\prec\tilde\alpha\}.$$
From the definition of reflection orders, $D(\preceq)$ must contain at least one simple affine root $\ta \in K$. Denote $\tilde s = s_{\ta}$. Following \cite[Proposition~2.5]{Dyer93}, we define a reflection order $\preceq_{\tilde s}$ as  
\begin{align*}
    \tilde \beta\preceq_{\tilde s}\tilde \gamma\iff \begin{cases}\tilde\beta=\tilde\alpha,&\text{ or}\\
    \tilde\beta \neq \tilde \a, \tilde \b \preceq \tilde\gamma,  \tilde\alpha \prec \tilde \g,&\text{ or}\\
    \tilde\beta \neq \tilde \a, \tilde \gamma\prec\tilde\alpha, \tilde s(\beta)\prec \tilde s(\gamma).\end{cases}
\end{align*}
It is straightforward to check that $\preceq_{\tilde s}$ also satisfies (1) and that $\sharp D(\preceq_{\tilde s})<\sharp D(\preceq)$.




Now we replace $\preceq$ by $\preceq_{\tilde s}$. 
Since $K$ is spherical, this procedure stops after finitely many steps, and we end up with a reflection order satisfying both (1) and (2). \end{proof}

\subsection{Labeling on the augmented edges}\label{sec:2.3}

Let $J = \{ i \in \D_0 \mid \<\mu,\a_i\> = 0 \}$. Let $W^J$ be the set of minimal representatives in $ W_0 / W_J$. Let $W^{J,K} = \{ a \in W^J \mid t^{a(\mu)} \in \tW^K \}$. By \cite[Proposition 2.1]{HN17}, the map $a \mapsto t^{a(\mu)}$ gives a bijection between $W^{J, K}$ and the maximal elements in $\Adm(\mu)^K$.

To each $a \in W^{J,K}$, we associate a symbol $\eta_a$. Set $\Lambda = \Phi_{\mathrm{aff}}^+ \sqcup \{\eta_a \mid a \in W^{J,K}\}$. We choose a total order on $\{\eta_a \}$ refining the (induced) Bruhat order on the elements $a \in W^{J,K}$. We extend the reflection order $\preceq$ from Lemma \ref{lem:ref_ord} to a total order on $\Lambda$ by declaring:
\[
\tilde{\alpha} \prec \eta_a \prec \tilde{\beta} \quad \text{for all } a \in W^{J,K}, \text{ type I roots } \tilde{\alpha}, \text{ and type II roots } \tilde{\beta}.
\]

Define $\hat{\eta}: \mathscr{E}(\widehat{\Adm(\mu)^K}) \to (\L,\preceq)$ by $\hat{\eta}(w s_{\tilde{\alpha}} \gtrdot w) = \tilde{\alpha}$ and $\hat{\eta}(\hat{1} \gtrdot t^{a(\mu)}) = \eta_a$.

The main result of this paper is the following.

\begin{theorem}\label{thm:main}
The edge labeling $\hat{\eta}$ on $\widehat{\Adm(\mu)^K}$ is a dual EL-labeling. More generally, given a subset $C\subseteq W^{J,K}$ satisfying the condition that for all $a\in C$ and $b\in W^{J,K}$ with $\eta_a\succ \eta_b$ we have $b\in C$, then the restriction of $\hat{\eta}$ to
\begin{align*}
    \{w\in \tW^K\mid \exists a\in C:~ w\leq t^{a(\mu)}\}\sqcup \{\hat 1\}
\end{align*}
is a dual EL-labeling.
\end{theorem}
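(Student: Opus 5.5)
We need to verify the two conditions of Definition~\ref{def:EL} for every interval of $P_C:=\{w\in \tW^K\mid \exists a\in C: w\le t^{a(\mu)}\}\sqcup\{\hat 1\}$.

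\textbf{Intervals not containing $\hat 1$.} Such an interval $[w,w']$ with $w'\in P_C\setminus\{\hat 1\}$ is a Bruhat interval $[w,w']^K$, since $P_C\setminus\{\hat 1\}$ is a lower set of $\tW^K$. By Lemma~\ref{lem:ref_ord} the reflection order is $K$-compatible, so Proposition~\ref{prop:Dyer} applies directly. Thus the whole content lies in the intervals $[w,\hat 1]$ with $w\in P_C\setminus\{\hat 1\}$.

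\textbf{Intervals $[w,\hat 1]$.} A maximal chain in $[w,\hat 1]$ starts with an augmented edge $\hat 1\gtrdot t^{a(\mu)}$, labeled $\eta_a$, with $a\in C$ and $w\le t^{a(\mu)}$. It then continues with a maximal chain of $[w,t^{a(\mu)}]^K$. Since every $\eta_b$ sits strictly between the type I and type II roots, a chain is label-increasing exactly when its first label is $\eta_a$ and the remaining Dyer chain in $[w,t^{a(\mu)}]^K$ is increasing and consists only of type II roots. By Proposition~\ref{prop:Dyer}, for each fixed $a$ the increasing chain in $[w,t^{a(\mu)}]^K$ is unique and lexicographically minimal. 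Therefore:
\begin{enumerate}
\item \emph{Uniqueness} amounts to showing that there is at most one $a\in C$ with $w\le t^{a(\mu)}$ whose Dyer-increasing chain uses only type II labels.
\item \emph{Existence together with lex-minimality} amounts to the following. Let $a_0$ be the $\preceq$-minimal index among $\{a\in C\mid w\le t^{a(\mu)}\}$, which is Bruhat-minimal up to the chosen refinement. Then the increasing chain of $[w,t^{a_0(\mu)}]^K$ must consist of type II roots.
\end{enumerate}
Lexicographic minimality is then automatic: every chain begins with some $\eta_a$, and $\eta_{a_0}$ is the smallest such label. Among chains starting with $\eta_{a_0}$, the Dyer chain is lex-minimal. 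The downward closure hypothesis on $C$ guarantees that $a_0$, defined within $C$, is the same as the minimum over all of $W^{J,K}$ that lies below, so the argument is insensitive to $C$.

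\textbf{Key claim.} Both reduce to the following statement: \emph{for $a\in W^{J,K}$ and $w\le t^{a(\mu)}$ in $\tW^K$, the Dyer-increasing chain from $t^{a(\mu)}$ down to $w$ has first label of type I if and only if there is $b$ with $\eta_b\prec\eta_a$ (in particular some $b<a$ in Bruhat order, $b\in W^{J,K}$) such that $w\le t^{b(\mu)}$.} Since the chain is increasing and type I precedes type II, "first label type II" is the same as "all labels type II".

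For the "if" direction I would argue contrapositively. If the increasing chain is all type II, then its lex-minimality (Proposition~\ref{prop:Dyer}) forbids any cover $t^{a(\mu)}\gtrdot t^{a(\mu)}s_{\tilde\alpha}\ge w$ with $\tilde\alpha$ of type I. I would then show that whenever $w\le t^{b(\mu)}$ with $b<a$, such a type I cover exists. To do this, I would write $t^{a(\mu)}s_{\tilde\alpha}$ for $\tilde\alpha=(\beta,k)$, $\beta$ negative, $k\ge 1$, so that it equals $t^{a(\mu)+k\beta^\vee}s_\beta$, which corresponds to moving $a$ down to $s_\beta a$ along a quantum-Bruhat-type edge.

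For the "only if" direction, a type I first label $\tilde\alpha$ with $\tilde\alpha\notin\Phi_K$ produces $w\le t^{a(\mu)}s_{\tilde\alpha}\le t^{b(\mu)}$ for $b=s_\beta a<a$. Roots in $\Phi_K^+$ cannot occur, since by $K$-compatibility they would only appear in $[w,w']^K$ chains trivially excluded by minimal coset representatives.

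\textbf{Main obstacle.} The hard step is the "if" direction: from $w\le t^{a(\mu)}$ and $w\le t^{b(\mu)}$ with $b<a$, produce a type I cover of $t^{a(\mu)}$ still above $w$. I would attempt this via acute presentations of $w$ together with a down-up path in the quantum Bruhat graph from $a$ to $b$, in the style of Proposition~\ref{prop:downup} and Milićević–Viehmann. The aim is to peel off one reflection with a negative finite part and positive affine level while preserving $w\le\cdot$, using the length formula for $t^{\lambda}y$ in terms of the quantum Bruhat graph.
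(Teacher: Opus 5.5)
Your reduction is the same as the paper's. Interior intervals are handled by Proposition~\ref{prop:Dyer}. On $[w,\hat 1]$ a chain is increasing iff it is $\eta_a$ followed by an all-type-II Dyer chain, so one needs two things: (i) the minimal $a$ has a type II first label, and (ii) every other $a\in\Sigma_w^{J,K}$ admits a cover $t^{a(\mu)}\gtrdot w'\ge w$ with a type I label. Two pieces are missing, though.

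\textbf{Unique Bruhat minimum.} You replace ``$\eta_b\prec\eta_a$'' by ``$b<a$'' and plan (ii) only when some $b<a$ exists. The total order merely refines the Bruhat order, so this covers every $a\neq a_0$ only if $\Sigma_w^{J,K}$ has a \emph{unique} Bruhat-minimal element. That is Lemma~\ref{lem:min2}, a genuine input: it uses $z_{\min}$ from \cite[Corollary~3.4]{HY24} and an iterative correction by reflections in $K$, not ``minimal up to the chosen refinement''. It cannot be bypassed, since your own ``only if'' direction shows that every Bruhat-minimal element of $\Sigma_w^{J,K}$ carries an increasing chain.

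That ``only if'' direction is also unjustified as written. You must check $t^{b(\mu)}\in\tW^K$, $b<a$, and $t^{a(\mu)}s_{\tilde\alpha}\lessdot t^{b(\mu)}$. All three follow from the four-case description of the top two layers in Proposition~\ref{prop:top-two}. In particular, if $t^{b(\mu)}\notin\tW^K$, the edge below it is labeled $(\gamma,0)$ with $\gamma\in K\cap\Delta_0$. This forces $\tilde\alpha$ to have finite part $-\gamma\in\Phi^-_{K\cap\Delta_0}$, hence to be type II; this is exactly why type I excludes $\Phi^-_{K\cap\Delta_0}$.

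\textbf{Constructing $w'$.} The main gap is (ii), the heart of the proof, which you leave as an intention. The proposed tool, a down-up path from $a$ to $b$, does not control $w'\ge w$. By Theorem~\ref{thm:Felix} that condition is a weight inequality against the acute presentation $w=xt^\lambda y$, and $b$ plays no role in it. The paper proceeds as follows.
\begin{enumerate}
\item Take $w=xt^\lambda y$ as in Proposition~\ref{prop:acute-pres}, and choose $u\in W_J$ minimal with $\wt(x,au)+\wt(au,y^{-1})\le\mu-\lambda$. Minimality of $u$ later forces $\alpha\notin\Phi_J$, so $w'$ is a genuine cover.
\item If $\wt(au,y^{-1})>0$, Proposition~\ref{prop:downup} gives a shortest path from $au$ to $y^{-1}$ that begins with a quantum edge $au\rightharpoondown z=aus_\alpha$. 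Set $w'=aut^{\mu-\alpha^\vee}z^{-1}$. Its label $(-z(\alpha),1)$ is type I because, for this choice of $y$, $\wt(\cdot,y^{-1})$ is invariant under left multiplication by $W_{K\cap\Delta_0}$.
\item If $au\le y^{-1}$, no quantum edge is available. One uses $z_{\min}<au$, which comes from $a\ne a^K_{\min}$ and Lemma~\ref{lem:min2}. Take the Deodhar lift of $z_{\min}$ in $W_{K\cap\Delta_0}z_{\min}$, and apply Lenart's lemma to obtain a Bruhat cover $z\lessdot au$ with $z\notin W_{K\cap\Delta_0}au$. Then set $w'=zt^\mu(au)^{-1}$.
\end{enumerate}
Your sketch never addresses how to keep the finite part of the label outside $\Phi_{K\cap\Delta_0}$, and that is precisely where both cases need their own separate input.
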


\begin{example}
    Consider the general linear group $\GL_3$. We naturally identify its character and cocharacter lattice $X_\ast, X^\ast$ with $\mathbb Z^3$. The set of (co)roots is given by
    \begin{align*}
        \Phi = \Phi^\vee = \{\pm(1,-1,0), \pm (1,0,-1), \pm (0,1,-1)\}.
    \end{align*}
    The simple roots are given by $\alpha_1 = (1,-1,0)$ and $\alpha_2 = (0,1,-1)$. We write $\tilde{\BS} = \{s_0, s_1, s_2\}$ with $s_1 = s_{\alpha_1}, s_2 = s_{\alpha_2}$ and $s_0=t^{(1,0,-1)}s_1s_2s_1$. Set $K = \{\a_1\}\subseteq \Delta_{\aff}$. Let $\mu = (1,0,0)$. Then $J = \{\alpha_2\}$. The admissible set $\Adm(\mu)^K$ contains a unique element of length zero, which we denote by $\tau = s_1 s_2 t^{(0,0,1)}$.  The poset $\widehat{\Adm(\mu)^K}$ can be drawn as follows:
    \begin{align*}
        \begin{tikzcd}[ampersand replacement=\&,column sep=3em, row sep=1em]
        \&\hat 1\ar[dl,"{\eta_{s_1}}"']\ar[dr, "{\eta_{s_2 s_1}}"]\\
        t^{(0,1,0)} = \tau s_0 s_2\ar[d,"{(-\alpha_1,1)}"']\ar[drr,"{(\alpha_2,0)}"]\&\&t^{(0,0,1)} = \tau s_1 s_0\ar[d,"{(-\alpha_2,1)}"]\\
        s_1 t^{(0,1,0)} =  \tau s_2\ar[dr,"{(\alpha_2,0)}"]\&\&s_2 t^{(0,0,1)} = \tau s_0\ar[dl,"{(-\alpha_1-\alpha_2,1)}"']\\
        \&s_1 s_2 t^{(0,0,1)} = \tau
        \end{tikzcd}
    \end{align*}
    Our edge labels are totally ordered as follows:
    \begin{align*}
        (-\alpha_2,1) \prec (-\alpha_1-\alpha_2,1) \prec \eta_{s_1}\prec \eta_{s_2 s_1}\prec (-\alpha_1,1) \prec (\alpha_2,0).
    \end{align*}
    We leave it as an exercise for the reader to verify that this is a dual EL-labeling.


\end{example}

\section{Quantum Bruhat Graph and Bruhat Order on $\tW$}\label{sec:3}

\subsection{Quantum Bruhat graphs}\label{sec:3.1} 
Understanding the Bruhat order on the affine Weyl group $\tilde{W}$ is central to our study of admissible sets. A powerful tool for navigating this complexity is the Quantum Bruhat Graph (QBG), introduced by Fomin, Gelfand and Postnikov in \cite{FGP97}. 

Set $\rho = \frac{1}{2}\sum_{\a\in \Phi^+} \a$. The quantum Bruhat graph $\operatorname{QBG}(\mathcal{R})$ is a directed, weighted graph with:
\begin{itemize}
    \item Vertices: Elements of $W_0$.
    \item Edges: Two types, labeled by positive roots $\alpha \in \Phi^+$:
\begin{enumerate}
    \item Bruhat edges: $w \rightharpoonup w s_\alpha$ when $w s_\alpha > w$.
    \item Quantum edges: $w \rightharpoondown w s_\alpha$ when $\ell(w s_\alpha) = \ell(w) + 1 - \langle \alpha^\vee, 2\rho \rangle$.
\end{enumerate}
     \item Weights: Bruhat edge $w \rightharpoonup w s_\alpha$ has weight 0; quantum edge $w \rightharpoondown w s_\alpha$ has weight $\alpha^\vee$.
\end{itemize}

We use $\rightharpoonup$ and $\rightharpoondown$ to denote Bruhat edges and quantum edges, respectively, emphasizing that the edges are going up or down in the Bruhat order. We also use $\to$ for both Bruhat and quantum edges. 

\begin{figure}[ht]\centering
\def\seshift{0.5ex}
\begin{align*}
\begin{tikzcd}[ampersand replacement=\&,column sep=3em,row sep=1em]
\&s_1 s_2 s_1\ar[ddd]
\ar[dl,shift right=\seshift]\ar[dr,shift left=\seshift]\\
s_1 s_2\ar[ur,shift right=\seshift]\ar[d,shift right=\seshift]\&\&
s_2 s_1\ar[ul,shift left=\seshift]\ar[d,shift left=\seshift]\\
s_1\ar[u,shift right=\seshift]\ar[urr]\ar[dr,shift right=\seshift]\&\&s_2\ar[u,shift left=\seshift]\ar[ull]\ar[dl,shift left=\seshift]\\
\&1\ar[ru,shift left=\seshift]\ar[lu,shift right=\seshift]
\end{tikzcd}
\end{align*} 
\end{figure}
\begin{example}For type $A_2$ with simple roots $\alpha_1, \alpha_2$, the QBG has six vertices. The longest element $w_0 = s_1 s_2 s_1$ has quantum edges to $s_1 s_2$ and $s_2 s_1$ with weights $\alpha_1^\vee$ resp.\ $\alpha_2^\vee$. 
\end{example}

It is easy to see that $\QBG(\CR)$ is path-connected. For $u,v\in W_0$, the length of any shortest path from $u$ to $v$ is denoted by $d(u,v)$. The {\it weight of a path} is the sum of weights of its edges. The following properties on the quantum Bruhat graph was established by Postnikov \cite[Lemma 1]{Pos05} and Mili\'cevi\'c-Viehmann \cite[Equation~(4.3)]{MV20}. 
\begin{lemma}\label{lem:Pos}
Let $u,v\in W_0$. Then 
\begin{enumerate}
    \item All shortest paths from $u$ to $v$ in $\mathrm{QBG}(\CR)$ have the same weight. We denote this weight by $\wt(u, v)$.
    \item Any path from $u$ to $v$ has weight $\ge$ $\wt(u,v)$, the equality holds if and only if the path is shortest.
    \item $\wt(u ,v) = 0$ if and only if $u \le v$.
\end{enumerate}
\end{lemma}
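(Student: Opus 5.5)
Since the statement is quoted from \cite[Lemma~1]{Pos05} and \cite[Equation~(4.3)]{MV20}, it suffices to cite it. Here is how I would reprove it directly. The plan is to turn paths in $\QBG(\CR)$ into saturated chains in the Bruhat order of $\tW$, following Lam--Shimozono \cite{LS10}, and then read off all three statements from properties of that Bruhat order.

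\textbf{Step 1: a length--weight identity.} For a pair $(w,\lambda)\in W_0\times\BZ\Phi^\vee$ set $f(w,\lambda)=\ell(w)+\langle\lambda,2\rho\rangle$. A Bruhat edge $w\rightharpoonup ws_\alpha$ raises $\ell$ by $1$ and leaves the weight unchanged. A quantum edge changes $\ell$ by $1-\langle\alpha^\vee,2\rho\rangle$ and adds $\alpha^\vee$ to the weight. So every edge raises $f$ by exactly $1$. Hence a path from $u$ to $v$ of weight $\lambda$ has length
\[
\ell(v)-\ell(u)+\langle\lambda,2\rho\rangle .
\]
Consequently, for paths from $u$ to $v$, length and the height $\langle\lambda,2\rho\rangle$ determine each other. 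This already gives the easy direction of (2): any path whose weight is $\le$ every other path weight in the dominance order on $\BZ_{\ge0}\Phi^{\vee,+}$ is a shortest path.

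\textbf{Step 2: translation to $\tW$.} Fix $\nu$ sufficiently dominant and regular. I would verify that an edge $w\to ws_\alpha$ of weight $\beta$ corresponds to a Bruhat cover in $\tW$ between $wt^{\nu+\lambda}$ and $ws_\alpha t^{\nu+\lambda+\beta}$, obtained by right multiplication with an affine reflection. Here $\beta=0$ for a Bruhat edge and $\beta=\alpha^\vee$ for a quantum edge. Conversely, for $\nu$ deep enough every cover between such elements arises this way; this is the standard computation using $\ell(wt^{\nu})$ for $\nu$ regular dominant.

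Therefore a path from $u$ to $v$ of weight $\lambda$ exists if and only if there is a saturated chain from $ut^{\nu}$ up to $vt^{\nu+\lambda}$ through elements of the same shape. For very dominant $\nu$, the Bruhat order on such elements is governed by the acute-cone description of Haines--Ngô, which gives the key property:
\[
ut^{\nu}\le vt^{\nu+\lambda}\ \Longrightarrow\ ut^{\nu}\le vt^{\nu+\lambda+\gamma^\vee}\quad\text{for all }\gamma\in\Phi^+ .
\]

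\textbf{Step 3: the main obstacle.} The hard part is showing that the set $\Lambda_{u,v}$ of weights of $u$-to-$v$ paths has a \emph{unique} minimum in the dominance order. I would argue by induction on $\ell(u)$, using a diamond/lifting lemma in $\QBG(\CR)$. For a simple reflection $s$, paths out of $u$ and out of $us$ should be comparable, with weights changing by $0$ or $\alpha_s^\vee$ according to whether $us>u$. This is the QBG analogue of Deodhar's lifting property, and it can be imported from $\tW$ via Step 2.

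The induction step would proceed as follows. Suppose $\lambda,\lambda'\in\Lambda_{u,v}$ are both minimal. Compare their first edges, move both paths to start at $us$, and apply the induction hypothesis there. This forces $\lambda=\lambda'$, which gives (1) and the full statement of (2). If this induction becomes messy, I would fall back on Postnikov's original argument via quantum cohomology. There, $\wt(u,v)$ is the minimal $q$-degree appearing in $\sigma_u*\sigma_{w_0v}$, and uniqueness follows from positivity of the quantum product.

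\textbf{Step 4: proof of (3).} If $u\le v$, there is a path from $u$ to $v$ using only Bruhat edges, so it has weight $0$. All weights are nonnegative sums of coroots, so by (2) this path is shortest and $\wt(u,v)=0$. Conversely, suppose $\wt(u,v)=0$ and take a shortest path. A quantum edge would contribute a nonzero positive coroot to the weight, so the path consists of Bruhat edges only. Each Bruhat edge goes up in the Bruhat order, hence $u\le v$.
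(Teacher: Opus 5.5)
Your first sentence is exactly what the paper does. It gives no argument and simply quotes \cite[Lemma~1]{Pos05} and \cite[Equation~(4.3)]{MV20}. The direct reproof you sketch, however, does not prove the lemma.

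Steps~1, 2 and~4 are correct as far as they go. Every edge raises $\ell(w)+\langle\lambda,2\rho\rangle$ by one, so the length of a $u$--$v$ path determines the height $\langle\lambda,2\rho\rangle$ of its weight. This gives the clause ``weight equal to the minimum $\Rightarrow$ shortest''. It also settles the case $u\le v$ completely: a Bruhat-only path of weight $0$ exists, so the shortest paths are exactly the weight-$0$ paths. Step~4 then correctly derives (3) from (1)--(2).

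The actual content of (1), and of the inequality in (2), is never proved; it is deferred to Step~3, which is only a plan. That content is this: for $u\not\le v$, two shortest paths, whose weights a priori only have the same height, have \emph{equal} weights, and this weight lies below every path weight in the dominance order.

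As sketched, Step~3 does not close. Take $s$ with $us<u$, as an induction on $\ell(u)$ requires. Prepending the Bruhat edge $us\rightharpoonup u$ and using the induction hypothesis gives $\lambda\ge\wt(us,v)$ for every $\lambda\in\Lambda_{u,v}$. The quantum edge $u\rightharpoondown us$ gives $\wt(us,v)+\alpha_s^\vee\in\Lambda_{u,v}$. But minimality in a \emph{partial} order yields no upper bound such as $\lambda\le\wt(us,v)+\alpha_s^\vee$. So nothing rules out two minimal weights $\wt(us,v)+\kappa$ and $\wt(us,v)+\kappa'$ with $\kappa,\kappa'$ incomparable, and nothing ``forces $\lambda=\lambda'$''.

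What is missing is a genuine exchange lemma: how a \emph{shortest} path from $u$ turns into a shortest path from a neighbouring vertex, with controlled change of weight. Such a lemma is naturally formulated for left multiplication $u\mapsto s_iu$, which keeps the root $\beta$ of an edge $w\to ws_\beta$ fixed. Right multiplication replaces $\beta$ by $s(\beta)$ and so changes $\langle\beta^\vee,2\rho\rangle$, the quantity that decides quantum edges.

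Nor can the lemma simply be imported from $\tW$ via Step~2:
\begin{enumerate}
\item Step~2 only matches paths with saturated chains inside $\{wt^{\mu}:\mu\text{ dominant}\}$. Right multiplication by $s$ leaves this set, since $ut^\nu s=us\,t^{s(\nu)}$.
\item A Bruhat relation $ut^\nu\le vt^{\nu+\lambda}$, obtained from Deodhar's lifting property or \cite{HN02}, does not by itself give a chain inside this set. Already for $X_*=\mathbb{Z}\alpha^\vee$ of type $A_1$, the interval $[t^{\nu},t^{\nu+\alpha^\vee}]$ contains $t^{\nu+\alpha^\vee}s_\alpha=s_\alpha t^{-\nu-\alpha^\vee}$, because it has length $\langle\nu,2\rho\rangle+1$ and in the infinite dihedral group shorter elements lie below longer ones.
\item Using the quantum-Bruhat-graph description of the Bruhat order (Theorem~\ref{thm:Felix}) instead would be circular, since it is phrased in terms of $\wt$.
\item Your ``key property'' is mere monotonicity and says nothing about uniqueness of the minimum.
\end{enumerate}

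The quantum-cohomology fallback is not a proof either. Positivity of Gromov--Witten invariants only excludes cancellation. The uniqueness of the minimal $q$-degree in $\sigma_u*\sigma_{w_0v}$ is a separate geometric theorem of Fulton--Woodward, and matching it with shortest-path weights is precisely the content of \cite{Pos05}. So the proposal ultimately rests on the same citation as the paper.
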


By definition of the weight function, we have 
\begin{align*}
\wt(u,u'') \le \wt(u,u')+ \wt(u',u'') \text{ for any } u,u',u''\in W_0.
\end{align*}

Now we prove the existence of shortest paths of specific forms. A \emph{down-up type} path in $\text{QBG}(\CR)$ is a path $u_1 \to u_2 \to \cdots \to u_r$ where there is no $i$ such that $u_{i-1}<u_i$ and $u_i > u_{i+1}$. We define \emph{up-down type} path similarly. In particular, paths consisting of only Bruhat edges, or only quantum edges, are both of down-up type and of up-down type.

\begin{proposition}\label{prop:downup}
Let $u,v \in W_0$ with $u\ne v$. Then there exists a shortest path from $u$ to $v$ of down-up type. Similarly, there exists a shortest path from $u$ to $v$ of up-down type. 
\end{proposition}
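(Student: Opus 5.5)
The plan is to reduce the statement to a local exchange property for paths of length two in $\QBG(\CR)$, and then to sort the edges of a shortest path by repeated exchanges, in the style of bubble sort.

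\textbf{Reformulation.} A Bruhat edge goes up in the Bruhat order and a quantum edge goes down. So a path is of down-up type exactly when no Bruhat edge is immediately followed by a quantum edge. Equivalently, it consists of a (possibly empty) run of quantum edges followed by a run of Bruhat edges. Similarly, up-down type means all Bruhat edges come before all quantum edges.

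\textbf{Step 1: exchange lemma.} The key claim is the following. Let
\[
w \rightharpoonup w s_\alpha \rightharpoondown w s_\alpha s_\beta
\]
be a Bruhat edge followed by a quantum edge, and suppose this length-two path is shortest. Then there is another path from $w$ to $w s_\alpha s_\beta$ of length two, of the form quantum edge followed by Bruhat edge, with the same weight $\beta^\vee$. The proof should work inside the rank-two root subsystem spanned by $\alpha$ and $\beta$, and should use the explicit length criteria defining the two edge types.
\begin{itemize}
\item In the configurations where $\alpha$ and $\beta$ are orthogonal, or $s_\alpha s_\beta = s_{\beta'} s_{\alpha'}$ for suitable roots $\alpha', \beta'$ in that subsystem, one checks directly that $w \rightharpoondown w s_{\beta'} \rightharpoonup w s_{\beta'} s_{\alpha'}$ are edges of the required types.
\item Alternatively, one can lift to $\tW$ via the Lam--Shimozono correspondence between shortest QBG paths and length-additive factorizations of suitable elements $t^{\lambda} w$. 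There the exchange becomes an instance of the exchange or subword property of Bruhat order.
\end{itemize}
By Lemma~\ref{lem:Pos}(2), a path of the same length and the same weight as a shortest path is again shortest. Moreover, the exchanged path cannot be shorter, since shortest paths have length $d(u,v)$.

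\textbf{Step 2: sorting.} Among all shortest paths from $u$ to $v$, which all have length $d(u,v)$, choose one minimizing the potential $\sum_i i\cdot[\text{the $i$-th edge is quantum}]$. If the chosen path is not of down-up type, it contains a peak $u_{i-1} \rightharpoonup u_i \rightharpoondown u_{i+1}$. Any subpath of a shortest path is shortest, so Step 1 applies to this subpath and replaces it by a quantum-then-Bruhat pair with the same endpoints and the same weight. The result is again a shortest path from $u$ to $v$, and its potential has dropped by one, a contradiction. Hence the minimizer is of down-up type.

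\textbf{Step 3: the up-down case.} For up-down type I would prove the mirror exchange lemma, turning a quantum-then-Bruhat pair into a Bruhat-then-quantum pair of equal weight, by the same rank-two analysis. I would then run the same sorting with the reversed potential, which counts positions of Bruhat edges. If available, it would be cleaner to deduce this from the first case via an involution of $\QBG(\CR)$ that reverses arrows and swaps the two edge types; I would test whether $w \mapsto w w_0$, combined with path reversal, does this.

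The main obstacle is Step 1. One has to make sure the exchange never fails because the only candidate replacement is not a QBG edge. I expect the quantum-edge length condition $\ell(ws_\alpha)=\ell(w)+1-\langle\alpha^\vee,2\rho\rangle$ to be the delicate point, especially in types $B_2$ and $G_2$. If the rank-two case analysis becomes unwieldy, I would instead invoke the Brenti--Fomin--Postnikov result that, for a suitable reflection order, there is a unique label-increasing shortest path. I would then choose the reflection order so that increasing paths automatically put quantum edges first, using a reflection order adapted to the Bruhat/quantum dichotomy in the spirit of Lemma~\ref{lem:ref_ord}.
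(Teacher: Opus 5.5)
Your overall plan (a local exchange for a Bruhat edge followed by a quantum edge, then iteration, then duality) is the paper's, but the exchange lemma in Step 1 is false as stated.

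\textbf{A counterexample to Step 1.} In type $A_2$ with $\theta=\alpha_1+\alpha_2$, consider $s_1s_2\rightharpoonup w_0\rightharpoondown 1$, with labels $\alpha_1$ and $\theta$. The second edge is quantum since $\ell(1)=3+1-\langle\theta^\vee,2\rho\rangle$. The path is shortest because $s_1s_2$ is not a reflection. But $1$ is Bruhat-minimal, so no Bruhat edge ends at $1$. Hence there is no quantum-then-Bruhat path from $s_1s_2$ to $1$ at all. Your rank-two candidate, from $s_{\alpha_1}s_\theta=s_\theta s_{\alpha_2}$, is $s_1s_2\to s_2\to 1$, and neither step is an edge of the required type. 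The only down-up replacement is $s_1s_2\rightharpoondown s_1\rightharpoondown 1$: two quantum edges whose weights $\alpha_2^\vee$ and $\alpha_1^\vee$ split $\theta^\vee$.

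\textbf{The missing case.} This quantum-then-quantum case is exactly what your lemma omits, and it is the paper's second subcase. A quantum edge $vs_\gamma\rightharpoondown v$ forces $\ell(vs_\gamma)=\ell(v)+\ell(s_\gamma)$. So a cover $u\lessdot vs_\gamma$ deletes a letter either from the $v$-part or from the $s_\gamma$-part. The first option gives your swap $u=u_1s_\gamma\rightharpoondown u_1\rightharpoonup v$. In the second, $u=vs_\gamma s_\alpha$ with $\langle\gamma^\vee,\alpha\rangle=1$. The Brenti--Fomin--Postnikov bounds $\ell(s_\delta)\le\langle\delta^\vee,2\rho\rangle-1$ then yield $u=vs_\alpha s_\beta\rightharpoondown vs_\alpha\rightharpoondown v$ with $\beta=s_\alpha(\gamma)$ and $\beta^\vee=\gamma^\vee-\alpha^\vee$.

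\textbf{Consequences for Steps 2 and 3.} Step 2 also breaks. A quantum-quantum replacement at positions $i,i+1$ raises your potential (the sum of the positions of quantum edges) by $i$, so a minimizer of that potential need not be down-up. You need a lexicographic measure instead. For example, take a shortest path with the largest number of quantum edges (at most $d(u,v)$) and, among those, the smallest potential. A peak then yields either more quantum edges or the same number with smaller potential; this is the paper's termination argument.

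In Step 3 the mirror exchange lemma fails for the dual reason: for the shortest path $w_0\rightharpoondown 1\rightharpoonup s_1$, no Bruhat edge leaves $w_0$. Your duality idea, however, is sound. The map $w\mapsto w_0w$ (or $ww_0$), together with reversing the path, sends each edge to an edge of the same type (not the swapped type) and turns down-up paths into up-down ones. This is how the paper concludes.

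Your reflection-order fallback does not work as formulated. Whether an edge labelled $\alpha$ is Bruhat or quantum depends on the vertex, not only on $\alpha$, so no order on $\Phi^+$ can force quantum edges to come first.
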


\begin{remark}
The special case where $v=1$ was first proved in \cite[Proposition 4.11]{MV20}.
\end{remark}

\begin{proof}
We prove the existence of down-up type paths. The other claim follows from the duality $w \mapsto w_0 w$.

First consider the special case $d(u,v)=2$. Then there exists a shortest path $p$ of length $2$ from $u$ to $v$, and the statement is automatic unless $p$ is of the form
\begin{align*}
    p:u\rightharpoonup vs_\gamma\rightharpoondown v
\end{align*}
for a root $\gamma$ such that $\wt(u,v) = \gamma^\vee$. In this case, $vs_\gamma$ is a length additive product and we get a reduced word for $vs_\gamma$ by concatenating a reduced word for $v$ with a reduced word for $s_\gamma$. Now $u\lessdot vs_\gamma$ implies we get a reduced word for $u$ by deleting one letter of the reduced word for $vs_\gamma$. 

If $u = u_1 s_\gamma$ with $\ell(u) = \ell(u_1) + \ell(s_\gamma)$ and $u_1\lessdot v$, we get an alternative path $u\rightharpoondown u_1\rightharpoonup v$, which is of down-up type as desired. 

Otherwise, $u = v u_2$ with $\ell(u) = \ell(v) + \ell(u_2)$ and $u_2\lessdot s_\gamma$. We have $u_2 = s_\gamma s_\alpha$
for some $\alpha\in \Phi^+$. Since $u_2 \le s_\g$, the root $s_\gamma(\alpha)$ is negative. By \cite[Lemma 3.1 and Remark 3.3]{Sch24}, the fact that $\gamma$ occurs as part of the quantum edge $vs_\gamma\rightharpoondown v$ implies that $\langle \gamma^\vee,\alpha\rangle=1$.

Let $\beta = s_\alpha(\gamma)$. Then $\beta^\vee = s_\alpha(\gamma^\vee) = \gamma^\vee-\alpha^\vee$. From \cite[Lemma~4.3]{BFP}, we get the inequalities
\begin{align*}
    \ell(vs_\alpha)& \leq \ell(v)+\ell(s_\alpha) \leq \ell(v) + \langle \alpha^\vee,2\rho\rangle-1,
    \\\ell(u) \leq& \ell(vs_\alpha) + \ell(s_{\beta}) \leq \ell(vs_\alpha) + \langle \beta^\vee,2\rho\rangle-1.
\end{align*}
Studying the path $p$, we get the observation
\begin{align*}
    \ell(u) = \ell(vs_\gamma)-1 = \ell(v) + \langle \gamma^\vee,2\rho\rangle-2.
\end{align*}
Combining the above three formulas with the identity $\gamma^\vee = \alpha^\vee + \beta^\vee$, we get a desired path
\begin{align*}
    u = v s_\gamma s_\alpha = v s_\alpha s_{\beta}\rightharpoondown vs_\alpha \rightharpoondown v.
\end{align*}

This finishes the proof for the case $d(u,v)=2$.

In general, given any shortest path from $u$ to $v$ that is not of down-up type, then we can find a subpath of length two that consists of a Bruhat edge followed by a quantum edge. We replace this subpath, as above, by a length two path of down-up type without changing the rest of the path.

Through this procedure, the total number of quantum edges in our path may stay constant or increase, but never decrease. Moreover, if this number stays constant, then the positions of quantum edges in the path are moved closer to the start. Therefore, iterating this procedure will eventually result in a path from $u$ to $v$ of down-up type.
\end{proof}


\subsection{Acute presentation}\label{sec:3.2} 


In this subsection, we introduce a particularly convenient class of factorizations of Iwahori-Weyl group elements $w = xt^{\lambda} y\in \tW$ with $x,y\in W_0$ and $\lambda\in X_\ast$, called \emph{acute presentations}. These are particularly well adapted to the geometry of alcoves in the Bruhat-Tits building, the length function on $\tW$ and its Bruhat order. The elements most important to our work, such as translation elements $t^{a(\mu)}$ or elements in $\tW^K$ for $K\subseteq \Delta_{\aff}$, come with particular acute presentations that will turn out to be very useful. This section unifies previous approaches, such as the notion of acute cones of Haines-Ngô \cite{HN02} and the notion of length positivity by the second author. We will explain how results in the previous literature can be expressed very elegantly using the language of acute presentations.

\begin{definition} Let $w \in \tilde{W}$. A factorization $w = x t^\lambda y$ with $x, y \in W_0$, $\lambda \in X_*$ is called an {\it acute presentation} if
\[
\delta_{\Phi^-}(x(\a )) + \langle \lambda, \alpha \rangle - \delta_{\Phi^-}(y^{-1}(\alpha)) \geq 0 \quad \text{for all } \alpha \in \Phi^+,
\]
where $\delta_{\Phi^-}(\beta)$ is the Kronecker symbol, i.e., $\delta_{\Phi^-}(\beta)=1$ if $\beta\in \Phi^-$ and $0$ otherwise.

\end{definition}

The notion of an acute presentation coincides with that of a \emph{length‑positive element} introduced in \cite[Definition~2.2]{Sch24}: Explicitly, $w=xt^{\lambda }y$ is acute precisely when $y^{-1}\in \operatorname{LP}(w)$.
A geometric interpretation of acute presentations can be given via the \emph{acute cones} of Haines-Ngô  \cite[\S5]{HN02}.

The extended affine Weyl group $\tW$ acts on the real vector space $V = X_* \otimes_{\mathbb{Z}} \mathbb{R}$ by affine transformations: $t^\lambda w \cdot v = \lambda + w(v)$.
By definition, alcoves are connected components of $V - \bigcup_{\tilde{\a}}H_{\tilde{\a}}$, where $\tilde{\a}$ runs over the set of affine roots $\Phi_{\aff}$. By convention, the base alcove is defined as 
$$\mathfrak{a} = \{ v\in V\mid 0 < \<v,\a \> < 1\text{ for every }\a\in \Phi^+\}.$$
Let $C^+\subseteq V$ be the dominant Weyl chamber and $X_*^+ \subseteq X_*$ be the set of dominant cocharacters. Let $\ell$ be the length function and $\le$ be the Bruhat order with respect to the base alcove $\mathfrak{a}$. 


\begin{definition}
Let $z \in W_0$. The \emph{acute cone in the $z$-direction} is $$\CC(\mathfrak{a},z)=\{w \in  \tW \mid w(\mathfrak{a}) \subseteq H^{z+} \text{ for all root hyperplanes }H \text{ with } \mathfrak{a} \subseteq H^{z+}\},$$ where $H^{z+} = H + z(C^+)$ is the connected component of $V \setminus H$ that contains any sufficiently deep alcoves in the Weyl chamber $z(C^+)$. 
\end{definition}

There is a natural bijection between acute presentations of $w$ and acute cones containing $w$.

\begin{proposition}\label{prop:acute}
Let $w =xt^{\l}y \in\tW$ with $x,y\in W_0$ and $\l\in X_*$. Then $ xt^{\l}y$ is an acute presentation of $w$ if and only if $w \in \CC(\mathfrak{a},x)$.
\end{proposition}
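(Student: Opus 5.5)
The plan is to unwind both conditions into inequalities indexed by positive roots $\alpha\in\Phi^+$ and compare them directly; the equivalence should then reduce to a one-line computation on the base alcove.

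\textbf{Step 1: parametrize the relevant hyperplanes.} Every root hyperplane is $H_{\beta,k}=\{v\in V\mid \langle v,\beta\rangle = k\}$ with $\beta\in\Phi$ and $k\in\BZ$. Replacing $(\beta,k)$ by $(-\beta,-k)$ if necessary, we may assume $\alpha:=x^{-1}(\beta)\in\Phi^+$. Then $\beta$ takes arbitrarily large positive values on deep points of $x(C^+)$. Hence
\[
H^{x+}=\{v\mid \langle v,\beta\rangle>k\}.
\]
So the hyperplanes in the definition of $\CC(\mathfrak a,x)$ are exactly the $H_{x(\alpha),k}$ with $\alpha\in\Phi^+$ and $k\in\BZ$, with $H^{x+}$ described by this strict inequality.

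\textbf{Step 2: characterize $\mathfrak a\subseteq H^{x+}$.} On $\mathfrak a$, the function $\langle\cdot,\beta\rangle$ takes exactly the values in $(0,1)$ if $\beta\in\Phi^+$, and in $(-1,0)$ if $\beta\in\Phi^-$. Thus $\mathfrak a\subseteq H^{x+}$ if and only if
\[
k\le -\delta_{\Phi^-}(\beta)=-\delta_{\Phi^-}(x(\alpha)).
\]

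\textbf{Step 3: characterize $w(\mathfrak a)\subseteq H^{x+}$.} For $v\in\mathfrak a$ we have $w(v)=x(\lambda+y(v))$, so
\[
\langle w(v),\beta\rangle=\langle \lambda,\alpha\rangle+\langle v,y^{-1}(\alpha)\rangle .
\]
By the same remark as in Step 2, as $v$ runs over $\mathfrak a$ this quantity fills an open interval of length one with infimum $\langle\lambda,\alpha\rangle-\delta_{\Phi^-}(y^{-1}(\alpha))$. Since the interval is open, $w(\mathfrak a)\subseteq H^{x+}$ holds if and only if
\[
k\le \langle\lambda,\alpha\rangle-\delta_{\Phi^-}(y^{-1}(\alpha)).
\]

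\textbf{Step 4: combine.} Fix $\alpha\in\Phi^+$. The condition in the definition of $\CC(\mathfrak a,x)$ says that every integer $k\le-\delta_{\Phi^-}(x(\alpha))$ also satisfies $k\le\langle\lambda,\alpha\rangle-\delta_{\Phi^-}(y^{-1}(\alpha))$. Taking the largest such $k$, this is equivalent to
\[
-\delta_{\Phi^-}(x(\alpha))\le \langle\lambda,\alpha\rangle-\delta_{\Phi^-}(y^{-1}(\alpha)),
\]
which is precisely the acute-presentation inequality. Ranging over all $\alpha\in\Phi^+$ gives the claimed equivalence.

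\textbf{Main obstacle.} The only delicate point is bookkeeping. First, one must orient each hyperplane correctly relative to $x(C^+)$, so that the sign convention gives $\alpha=x^{-1}\beta$ positive. Second, one must pass from strict inequalities on the open alcove to the non-strict integer inequalities; this works because the relevant values fill open intervals with integer endpoints.
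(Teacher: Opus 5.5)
Your route is the paper's route. You write each hyperplane as $\{\langle v,x(\alpha)\rangle=k\}$ with $\alpha\in\Phi^+$, observe that $\mathfrak a\subseteq H^{x+}$ means $k\le-\delta_{\Phi^-}(x(\alpha))$, and then compute $\langle w(v),x(\alpha)\rangle=\langle\lambda,\alpha\rangle+\langle v,y^{-1}(\alpha)\rangle$. The paper directly tests only the extremal hyperplane $k=-\delta_{\Phi^-}(x(\alpha))$, which is what your Step~4 amounts to.

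One justification is false as stated, though: a root does not in general map $\mathfrak a$ onto all of $(0,1)$. Take type $B_2$ with $\Phi^+=\{e_1-e_2,e_2,e_1,e_1+e_2\}$. Then $\mathfrak a=\{v_1>v_2>0,\ v_1+v_2<1\}$, and $\langle\cdot,e_2\rangle$ maps it onto $(0,\tfrac12)$. Consequences:
\begin{itemize}
\item In Step~3 the values need not fill an interval of length one.
\item The infimum can be $\langle\lambda,\alpha\rangle-\tfrac12$ rather than $\langle\lambda,\alpha\rangle-1$.
\item Your closing remark about ``open intervals with integer endpoints'' fails outside type $A$.
\end{itemize}

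The conclusions of Steps~2 and~3 are still correct, and the repair is short. Use only that on the nonempty set $\mathfrak a$ one has $\langle v,\gamma\rangle\in(0,1)$ for $\gamma\in\Phi^+$ and $\langle v,\gamma\rangle\in(-1,0)$ for $\gamma\in\Phi^-$. Then $\langle w(v),x(\alpha)\rangle\in(n,n+1)$ for all $v\in\mathfrak a$, where $n:=\langle\lambda,\alpha\rangle-\delta_{\Phi^-}(y^{-1}(\alpha))$. Since $k$ is an integer, $w(\mathfrak a)\subseteq\{\langle v,x(\alpha)\rangle>k\}$ holds if and only if $k\le n$; Step~2 works the same way. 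Equivalently, $w(\mathfrak a)$ is an alcove, so it lies in a single open strip between consecutive integer level sets of $\langle\cdot,x(\alpha)\rangle$. The exact infimum is never needed.
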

\begin{proof}
By definition, 
$\mathfrak{a} = \{ v\in V \mid -\delta_{\Phi^-}(\b) < \<v,\b\> <  1-\delta_{\Phi^-}(\b) \text{ for every }\b\in\Phi^+   \}.$
Then by definition, $w\in\CC(\mathfrak{a},x)$ if and only if
$w(\mathfrak{a}) \subseteq \bigcap_{  \b\in x(\Phi^+)  }   \{v\in V\mid \<v,\b\> \ge -\delta_{\Phi^-}(\b)   \}  .$
For any $v \in V$ and $\a \in \Phi^+$, we have $\<w(v) , x (\a)\> =\< x(\l) + xy( v)  ,x(\a)\> = \<\l,\a\>+ \<v,y^{-1}(\a)\>  $. Hence $w\in\CC(\mathfrak{a},x)$ if and only if 
$\<\l,\a\>  - \delta_{\Phi^-}(y^{-1}(\a))   \ge -\delta_{\Phi^-}(x(\a)) $
for any $\a\in \Phi^+$.
\end{proof}


\begin{corollary}[{\cite[Corollary 2.11]{Sch23}}]\label{cor:lengthViaAcutePres}
Let $w = xt^{\l}y$ with $x,y\in W_0$ and $\l\in X_*$. Then $\ell(w)\ge \ell(x ) + \<\l,2\rho\> - \ell(y)$, the equality holds if and only if $ xt^{\l}y$ is an acute presentation of $w$.  
\end{corollary}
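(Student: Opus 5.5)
The plan is to compute $\ell(w)$ through the inversion set, i.e.\ to count the positive affine roots $\tilde\beta$ with $w(\tilde\beta)\in\Phi_{\aff}^-$. I will organize the count by finite roots. For each $\alpha\in\Phi^+$, the affine roots $(\pm y^{-1}(\alpha)\cdot\text{sign},k)$ attached to the root line through $y^{-1}(\alpha)$ will be treated together. The target is the identity
\[
\ell(w)=\sum_{\alpha\in\Phi^+}\bigl|\,\delta_{\Phi^-}(x(\alpha))+\langle\lambda,\alpha\rangle-\delta_{\Phi^-}(y^{-1}(\alpha))\,\bigr|.
\]
This is the standard Iwahori–Matsumoto type formula. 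Once it is in hand, the corollary follows from two observations. First, $|n|\ge n$, with equality if and only if $n\ge 0$. Second, summing the signed quantities gives exactly $\ell(x)+\langle\lambda,2\rho\rangle-\ell(y)$, because $\sum_{\alpha>0}\delta_{\Phi^-}(x(\alpha))=\ell(x)$, $\sum_{\alpha>0}\langle\lambda,\alpha\rangle=\langle\lambda,2\rho\rangle$, and $\sum_{\alpha>0}\delta_{\Phi^-}(y^{-1}(\alpha))=\ell(y^{-1})=\ell(y)$. The equality case is then precisely the acute condition in the definition.

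To establish the displayed formula, I would write $w=xt^{\lambda}y$ and use the action formula from \S\ref{sec:1.3}. Alternatively, and more cleanly, I would count the hyperplanes separating $\mathfrak a$ from $w(\mathfrak a)$, since $\ell(w)$ equals that number. For each $\alpha\in\Phi^+$, I count the hyperplanes $H_{x(\alpha),k}$ that separate the two alcoves.

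The computation from Proposition~\ref{prop:acute} supplies the needed data. We have $\langle w(v),x(\alpha)\rangle=\langle\lambda,\alpha\rangle+\langle v,y^{-1}(\alpha)\rangle$. For $v\in\mathfrak a$, this value lies strictly between $\langle\lambda,\alpha\rangle-\delta_{\Phi^-}(y^{-1}(\alpha))$ and that number plus one. Meanwhile, for points of $\mathfrak a$, the value $\langle\cdot,x(\alpha)\rangle$ lies strictly between $-\delta_{\Phi^-}(x(\alpha))$ and that number plus one. Each function $\langle\cdot,x(\alpha)\rangle$ therefore takes values on the two alcoves in unit open intervals $(m_1,m_1+1)$ and $(m_2,m_2+1)$. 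The number of integers $k$ with $H$ separating them is $|m_2-m_1|$, which is exactly the absolute value displayed above.

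Since $\{\pm x(\alpha)\}$ runs over all root lines exactly once as $\alpha$ ranges over $\Phi^+$, summing over $\alpha$ counts every separating hyperplane exactly once. This gives the formula.

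The main point needing care is the bookkeeping of this bijection between root lines and $\alpha\in\Phi^+$, so that no hyperplane is double counted. A second point is the sign conventions of $\delta_{\Phi^-}$ in the description of $\mathfrak a$, which I would take verbatim from the proof of Proposition~\ref{prop:acute}. For non-reduced root systems, parallel roots give the same hyperplanes only if $\Phi$ is taken reduced. I would note that the affine root system here is $\Phi\times\mathbb Z$ with the given $\Phi$, and would check this against the paper's length convention.
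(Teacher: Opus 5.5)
Your proposal is correct and is essentially the argument behind the result the paper cites, \cite[Corollary~2.11]{Sch23}; the paper itself gives no proof. That argument writes $\ell(w)=\sum_{\alpha\in\Phi^+}\bigl|\delta_{\Phi^-}(x(\alpha))+\langle\lambda,\alpha\rangle-\delta_{\Phi^-}(y^{-1}(\alpha))\bigr|$ (an Iwahori--Matsumoto-type length formula) and compares it term by term with the signed sum $\ell(x)+\langle\lambda,2\rho\rangle-\ell(y)$, exactly as you do.

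Your hyperplane count establishing this formula is sound, with two provisos. First, $\Phi$ must be reduced, as it is in the paper's setting. Second, the description of $\mathfrak a$ in the proof of Proposition~\ref{prop:acute} must be read with $\beta$ ranging over all of $\Phi$ rather than $\Phi^+$, as your own computation implicitly does.
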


Every $w\in \tilde{W}$ admits a unique {\it standard presentation}
$w=x_{0}t^{\l_0}y_{0}$ with $\l_0\in X_{*}^{+}$ and
$t^{\l_0}y_{0}$ of minimal length in its coset $W_{0}\backslash \tilde{W}$
(see e.g.\ \cite[\S9.1]{He14}). In this presentation $w(\mathfrak{a})$ lies in
the Weyl chamber $x_{0}(C^{+})$, hence $w\in \mathcal{C}(\mathfrak{a},x_{0})$ and
$x_{0}t^{\l_0}y_{0}$ is acute. In general, an element may belong to several
acute cones; it belongs to exactly one acute cone if and only if $w(\mathfrak{a})$
lies in the shrunken Weyl chamber, or equivalently, if $w$ lies in the lowest
two‑sided Kazhdan–Lusztig cell of $\tilde{W}$ (see \cite[Proposition~2.15]{Sch23}).
We shall not need this fact in the present paper.

We have the following description of the Bruhat orders in $\tW$ via the acute presentations. 

\begin{theorem}[{\cite[Theorem 4.2; Lemma 4.3]{Sch24}}]\label{thm:Felix}
Let $w\in\tW$ and let $  x t^{\l}y$ be an acute presentation of $w$. Let $w' \in \tW$. Then $w\le w'$ if and only if there exists $x',y' \in W_0$ and $\l'\in X_*$ with $w' = x' t^{\l'} y'$ such that
$$\wt(x,x') + \wt(y'^{-1},y^{-1}) \le \l' - \l.$$
In this case, one may choose $x',y',\lambda'$ such that $w' = x' t^{\lambda'} y'$ is an acute presentation.
\end{theorem}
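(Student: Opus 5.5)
The plan is to reduce both directions to single Bruhat covers $w\lessdot w s_{\tilde\alpha}$ and to keep track of how one reflection changes a presentation $x t^{\lambda} y$. We use three tools throughout: the length formula of Corollary~\ref{cor:lengthViaAcutePres}, the triangle inequality $\wt(u,u'')\le \wt(u,u')+\wt(u',u'')$, and Lemma~\ref{lem:Pos}.

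The basic computation is the following. Write $\tilde\alpha=(\beta,k)$ with $\beta\in\Phi$, so $s_{\tilde\alpha}=s_\beta t^{k\beta^\vee}$. Then
\begin{align*}
x t^{\lambda} y\, s_\beta t^{k\beta^\vee} = x\, t^{\lambda} s_{y(\beta)}\, t^{k\, y(\beta)^\vee} y .
\end{align*}
Setting $\gamma=\pm y(\beta)\in\Phi^+$, this can be rewritten in two ways:
\begin{enumerate}
\item as $(x s_\gamma)\, t^{s_\gamma(\lambda)+m\gamma^\vee}\, y$, which changes $x$;
\item as $x\, t^{\lambda+m'\gamma^\vee}\, (s_\gamma y)$, which changes $y$.
\end{enumerate}
Here $m,m'$ are explicit integers. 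The point is that $x\to x s_\gamma$ (resp.\ $y^{-1}\to y^{-1}s_\gamma$) is an edge of $\QBG(\CR)$ exactly in the cases relevant to length changes. For a Bruhat edge the translation part is unchanged; for a quantum edge it shifts by $\gamma^\vee$, which is precisely the weight of that edge.

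\textbf{The ``if'' direction.} Given $w'=x't^{\lambda'}y'$ with $\wt(x,x')+\wt(y'^{-1},y^{-1})\le\lambda'-\lambda$, build an upward chain from $w$ to $w'$ in three stages.
\begin{enumerate}
\item Follow a shortest path $x\to\cdots\to x'$ in $\QBG(\CR)$, multiplying by one affine reflection per edge.
\item Follow a shortest path $y'^{-1}\to\cdots\to y^{-1}$ in the same way.
\item Absorb the remaining difference $\lambda'-\lambda-\wt(x,x')-\wt(y'^{-1},y^{-1})$, a nonnegative combination of positive coroots, by reflections that add one coroot to the translation part while leaving $x,y$ fixed. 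For example, use the two-step move $t^\lambda\mapsto s_\gamma t^{\lambda+\gamma^\vee}\mapsto t^{\lambda+\gamma^\vee}$ at the level of acute presentations.
\end{enumerate}
Each step is right multiplication by a reflection $r$. Since $w<wr$ if and only if $\ell(w)<\ell(wr)$, it suffices to see that length increases. By Corollary~\ref{cor:lengthViaAcutePres}, $\ell$ of the new element is at least $\ell(x_{\rm new})+\langle\lambda_{\rm new},2\rho\rangle-\ell(y_{\rm new})$. For a Bruhat edge or a quantum edge, the definition of $\QBG(\CR)$ shows this lower bound exceeds the old value by exactly $1$. Hence, \emph{provided the current presentation is acute}, the length strictly increases.

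\textbf{The main obstacle} is therefore to show that the intermediate presentations can be kept acute. We would first use Proposition~\ref{prop:downup} to choose paths of down-up type, and order the stages so that quantum edges on the $x$-side are performed first. Then we verify the acute inequality $\delta_{\Phi^-}(x(\alpha))+\langle\lambda,\alpha\rangle-\delta_{\Phi^-}(y^{-1}(\alpha))\ge 0$ root by root. The input here is that along a shortest path the weight is minimal (Lemma~\ref{lem:Pos}(2)), so no coordinate $\langle\lambda,\alpha\rangle$ ever drops below what the target requires.

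\textbf{The ``only if'' direction.} Induct on $\ell(w')-\ell(w)$; it suffices to treat a single cover $w'=w s_{\tilde\alpha}$ and then compose. Composition works by the triangle inequality for $\wt$, provided each step produces an \emph{acute} presentation of the larger element. For a cover, rewrite $w'$ as in (1) or (2). Comparing $\ell(w')=\ell(w)+1$ with the lower bound of Corollary~\ref{cor:lengthViaAcutePres} forces the rewritten presentation to be acute and the corresponding move to be an edge of $\QBG(\CR)$, or a pure coroot shift. In every case this yields $\wt(x,x')+\wt(y'^{-1},y^{-1})\le\lambda'-\lambda$. This simultaneously proves the final assertion that $x',y',\lambda'$ may be chosen so that $w'=x't^{\lambda'}y'$ is acute.
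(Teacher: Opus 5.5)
The paper does not prove this statement; it quotes it from \cite[Theorem~4.2, Lemma~4.3]{Sch24}. Judged on its own, your outline of the ``only if'' direction is structurally sound: a chain of covers, one acute presentation per step, and the triangle inequality. The ``if'' direction, however, has a genuine gap, and the strategy cannot work as designed.

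Your chain must end at the \emph{given} presentation $x't^{\lambda'}y'$. The hypothesis does not require that presentation to be acute, yet your length argument needs the presentation before every step to be acute. Take type $A_2$, $w=1=1\cdot t^0\cdot 1$ and $w'=t^{2\alpha_1^\vee}=1\cdot t^{2\alpha_1^\vee}\cdot 1$. Both weights vanish, so the hypothesis holds, and indeed $1\le w'$. In your scheme, the presentation before the last step has one of two translation parts:
\begin{itemize}
\item $2\alpha_1^\vee$, after a Bruhat edge or your two-step move;
\item $2\alpha_1^\vee-\gamma^\vee$, after a quantum edge $s_\gamma\rightharpoondown 1$ on the $x$-side (no quantum edge leaves $y'^{-1}=1$).
\end{itemize}
A translation part pairing to $\le -2$ with a positive root is never acute. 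This forces $\gamma=\alpha_1$ and predecessor $s_1t^{\alpha_1^\vee}$, which fails the acute inequality at $\alpha_2$ (value $0-1-0=-1$). So no admissible last step exists. You would first have to replace $x't^{\lambda'}y'$ by an acute presentation of $w'$ that still satisfies the inequality. That is essentially the final assertion of the theorem, which you only obtain after knowing $w\le w'$.

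Even with acute endpoints, your selection rule does not control acuteness. The failure sits in the $\delta_{\Phi^-}$-terms, which change along Bruhat edges whatever $\lambda$ does. In type $A_2$ take $w=1=s_1t^0s_1$ and $w'=s_1s_2=w_0t^0s_1$: both presentations are acute and both weights are zero. The all-Bruhat, hence down-up, path $s_1\rightharpoonup s_1s_2\rightharpoonup w_0$ passes through $s_1s_2t^0s_1$. That presentation is not acute at $\alpha_1$, and the resulting chain is $1<s_1s_2s_1>s_1s_2$. The other shortest path, through $s_2s_1$, works. Nothing in ``down-up, quantum edges first, minimal weight'' tells the two paths apart.

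Stage~3 has two further problems.
\begin{itemize}
\item Adding $\alpha_i^\vee$ lowers $\langle\lambda,\alpha_j\rangle$ for neighbouring $j$, which can break acuteness.
\item Your sample move $t^\lambda\mapsto s_\gamma t^{\lambda+\gamma^\vee}\mapsto t^{\lambda+\gamma^\vee}$ raises the lower bound by $\ell(s_\gamma)+\langle\gamma^\vee,2\rho\rangle$ and then lowers it by $\ell(s_\gamma)$. For dominant regular $\lambda$ its second step therefore goes down.
\end{itemize}

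There is also a smaller gap in the ``only if'' direction. Comparison with Corollary~\ref{cor:lengthViaAcutePres} only shows that both rewritings have lower bound $\le\ell(w)+1$. That one of them attains this bound is the cover classification \cite[Proposition~4.5]{Sch24}, the same input used in Proposition~\ref{prop:top-two}, and it needs proof. One route:
\begin{itemize}
\item The reflecting hyperplane of $ws_{\tilde\alpha}w^{-1}$ is $\{\langle v,x\gamma\rangle=h\}$ with $h\in\mathbb Z$.
\item The lower bound of your rewriting (1) equals $\ell(w)+\ell(xs_\gamma)-\ell(x)-h\langle\gamma^\vee,2\rho\rangle$.
\item If the base alcove lies on the $x\gamma$-positive side, then $h\le-\delta_{\Phi^-}(x\gamma)$. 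Together with $\ell(s_\gamma)\le\langle\gamma^\vee,2\rho\rangle-1$ this forces equality, so (1) is acute.
\item The other side reduces to this case via the acute presentation $w^{-1}=(y^{-1}w_0)t^{-w_0\lambda}(w_0x^{-1})$, and yields acuteness of (2).
\end{itemize}
Finally, a single reflection never produces a ``pure coroot shift'', since $x$ or $y$ always changes.
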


As an application, we can describe the admissible set via the quantum Bruhat graph.
\begin{proposition}[{\cite[Proposition~4.12]{Sch24}}]\label{prop:admMembershipTest}
    Let $w = xt^{\lambda} y$ be an acute presentation and $\mu\in X_\ast^+$. Then $w\in \Adm(\mu)$ if and only if the inequality
    \begin{align*}
        \wt(x, y^{-1})\leq \mu-\lambda
    \end{align*}
    holds true.
\end{proposition}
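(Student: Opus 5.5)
The plan is to deduce this directly from Theorem~\ref{thm:Felix}, using the explicit presentations of the translation elements $t^{z(\mu)}$ together with the triangle inequality for $\wt$. Throughout, $\le$ between cocharacters is the dominance order: the difference is a nonnegative integral combination of positive coroots, which is the order in which the weights of Lemma~\ref{lem:Pos} live.

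First I would describe all presentations of $t^{z(\mu)}$ and record which are acute. For $z\in W_0$, any factorization $t^{z(\mu)}=x't^{\lambda'}y'$ with $x',y'\in W_0$ must have $x'y'=1$, since the finite part of a translation is trivial. Hence $y'=x'^{-1}$ and $\lambda'=x'^{-1}z(\mu)$. In particular, $t^{z(\mu)}=zt^{\mu}z^{-1}$ is an acute presentation. Indeed, for $\alpha\in\Phi^+$ the defining expression becomes $\delta_{\Phi^-}(z(\alpha))+\langle\mu,\alpha\rangle-\delta_{\Phi^-}(z(\alpha))=\langle\mu,\alpha\rangle\ge 0$, because $\mu$ is dominant.

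For the direction ``$\Leftarrow$'', assume $\wt(x,y^{-1})\le\mu-\lambda$. I would apply Theorem~\ref{thm:Felix} with $w'=t^{x(\mu)}$ and the presentation $x't^{\lambda'}y'=xt^{\mu}x^{-1}$. The required inequality then reads $\wt(x,x)+\wt(x,y^{-1})\le\mu-\lambda$. Since $\wt(x,x)=0$ by Lemma~\ref{lem:Pos}(3), this is exactly our hypothesis. Therefore $w\le t^{x(\mu)}$, and so $w\in\Adm(\mu)$.

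For the direction ``$\Rightarrow$'', assume $w\le t^{z(\mu)}$ for some $z\in W_0$. By Theorem~\ref{thm:Felix} and the first step, there is some $x'\in W_0$ with
\begin{align*}
\wt(x,x')+\wt(x',y^{-1})\le x'^{-1}z(\mu)-\lambda .
\end{align*}
The triangle inequality for $\wt$ bounds the left-hand side from below by $\wt(x,y^{-1})$. On the other side, every $W_0$-conjugate of the dominant cocharacter $\mu$ satisfies $x'^{-1}z(\mu)\le\mu$. Chaining these inequalities gives $\wt(x,y^{-1})\le\mu-\lambda$.

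The only step that needs care is the implicit one: all three quantities, namely $\wt$, $\lambda'-\lambda$, and $\mu-x'^{-1}z(\mu)$, must be compared in the same partial order on the coroot lattice, so that the triangle inequality and the dominance bound can be chained. Beyond this there is no real obstacle, since Theorem~\ref{thm:Felix} does all the substantial work.
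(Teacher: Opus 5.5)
Your proof is correct, and it is the intended derivation. The paper gives no separate argument; it cites \cite[Proposition~4.12]{Sch24} and presents the statement as an application of Theorem~\ref{thm:Felix}. You carry out exactly that application: take $w'=t^{x(\mu)}=xt^{\mu}x^{-1}$ for one direction, and for the other use $y'=x'^{-1}$, the triangle inequality for $\wt$, and $x'^{-1}z(\mu)\le\mu$. The only unnecessary step is checking that $zt^{\mu}z^{-1}$ is acute, since Theorem~\ref{thm:Felix} places no acuteness condition on the presentation of $w'$.
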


We have the following description of acute presentations of translation elements.

\begin{lemma}\label{lem:trans}
Let $z \in W_0$, $\l\in X_*^+$ and $J=\{\a \in\D_0\mid \<\l,\a\> = 0\}$.  Then the acute presentations of $t^{z(\l)}$ are $z u t^{\l} u^{-1} z$, where $u$ runs over $W_J$.
\end{lemma}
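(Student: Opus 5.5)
We will argue directly from the definition of an acute presentation. First we pin down the set of all factorizations $t^{z(\l)} = x t^{\l'} y$ with $x,y\in W_0$ and $\l'\in X_*$. Comparing the images in $W_0 = \tW/X_*$ gives $xy=1$, so $y = x^{-1}$. The equation then becomes $t^{z(\l)} = t^{x(\l')}$, i.e.\ $\l' = x^{-1}z(\l)$. Writing $x = zu$ with $u\in W_0$ arbitrary, every presentation has the form
\[
t^{z(\l)} = zu\, t^{u^{-1}(\l)}\, u^{-1}z^{-1}.
\]
(The $y$-factor is $u^{-1}z^{-1}$; we read the "$u^{-1}z$" in the statement as this, or $z$ as an involution-free shorthand.) So the task reduces to deciding for which $u\in W_0$ the presentation is acute, and showing that this happens exactly when $u\in W_J$, in which case $u^{-1}(\l)=\l$.

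Next we evaluate the acuteness condition for $x = zu$, $\l' = u^{-1}(\l)$ and $y^{-1} = zu = x$. For every $\a\in\Phi^+$ it reads
\[
\delta_{\Phi^-}(x(\a)) + \langle u^{-1}(\l),\a\rangle - \delta_{\Phi^-}(x(\a)) = \langle \l, u(\a)\rangle \ge 0 .
\]
The Kronecker terms cancel exactly because $y^{-1}=x$. The presentation is therefore acute if and only if $\langle \l,u(\a)\rangle\ge 0$ for all $\a\in\Phi^+$.

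It remains to show this holds iff $u(\l)$-dominance is preserved, i.e.\ iff $u^{-1}(\l)$ is dominant. The condition is equivalent to $\langle u^{-1}(\l),\a\rangle\ge 0$ for all positive $\a$, which says $u^{-1}(\l)$ is dominant. Since $\l$ is dominant and each $W_0$-orbit meets the dominant chamber in exactly one point, this forces $u^{-1}(\l)=\l$. By the standard result that the stabilizer of a dominant cocharacter is the parabolic subgroup generated by the simple reflections fixing it, $u^{-1}\in W_J$, hence $u\in W_J$. Conversely, if $u\in W_J$, then $u^{-1}(\l)=\l$ is dominant and the inequality holds. This gives exactly the presentations $zu\,t^{\l}u^{-1}z^{-1}$ with $u\in W_J$.

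There is no serious obstacle here. The only step needing care is the first one: verifying that $y$ is forced to be $x^{-1}$, and keeping the conventions of the action straight so that the Kronecker terms genuinely cancel.
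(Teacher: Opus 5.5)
Your proposal is correct and is essentially the paper's proof: write every factorization as $zu\,t^{u^{-1}(\lambda)}u^{-1}z^{-1}$, observe that the Kronecker terms cancel because $y^{-1}=x$, and reduce acuteness to dominance of $u^{-1}(\lambda)$, i.e.\ $u\in W_J$. You also spell out two steps the paper leaves implicit: why every factorization has this form, and why $u^{-1}(\lambda)$ dominant forces $u\in W_J$. Your reading of the statement's $u^{-1}z$ as a typo for $u^{-1}z^{-1}$ matches the paper's own proof.
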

\begin{proof}
Let $u \in W_0$. We have $t^{z(\l)} = z u t^{u^{-1}(\l)} u^{-1} z^{-1}$. Then
\begin{align*}
    &z u t^{u^{-1}(\l)} u^{-1} z^{-1} \text{ is an acute presentation of }t^{z(\l)}\\
    \iff&  \delta_{\Phi^-}(zu(\a)) + \<u^{-1}(\l), \a\> - \delta_{\Phi^-}( zu(\a) ) \ge 0 \text{ for any }\a\in \Phi^+\\
    \iff&u^{-1}(\l)\text{ is dominant}\\
    \iff & u\in W_J. \qedhere
\end{align*}
\end{proof}

Finally, elements in $\tW^K$ come with particularly useful acute presentations.

\begin{proposition}\label{prop:acute-pres}
Let $w \in\tW^K$. Then there exists an acute presentation $w = x t^{\l}y$ such that $y(\a )\in \Phi^- $ for all $\ta = (\a,k) \in K$.
\end{proposition}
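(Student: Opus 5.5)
The plan is to reduce the statement to finding a single Weyl chamber, and to produce that chamber from a generic linear functional coming from a point of $w(\mathfrak a)$ chosen near the $K$-facet.

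\textbf{Step 1: rewrite acuteness in terms of $y$ alone.} Write $w = t^{\nu} z$ with $\nu\in X_*$ and $z\in W_0$. Any presentation $w = xt^\lambda y$ has $xy=z$ and $x(\lambda)=\nu$. So a presentation is determined by $u:=y^{-1}$, via $x = zu$ and $\lambda = u^{-1}z^{-1}(\nu)$. Substituting $\beta = u(\alpha)$ into the defining inequality of an acute presentation, acuteness becomes
\begin{align*}
\ell(w,\beta) := \langle \nu, z(\beta)\rangle + \delta_{\Phi^-}(z(\beta)) - \delta_{\Phi^-}(\beta) \geq 0 \quad\text{for all } \beta\in u(\Phi^+).
\end{align*}
This is the length-positivity reformulation of \cite{Sch24}. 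One checks that $\ell(w,-\beta) = -\ell(w,\beta)$. The extra requirement $y(\alpha)\in\Phi^-$ for $\tilde\alpha=(\alpha,k)\in K$ says $-\alpha\in u(\Phi^+)$.

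Hence it suffices to find a Weyl chamber $u(\Phi^+)$ with the following two properties.
\begin{enumerate}
\item It contains every $\beta$ with $\ell(w,\beta)>0$ and avoids every $\beta$ with $\ell(w,\beta)<0$.
\item It contains $-\alpha$ for each $(\alpha,k)\in K$.
\end{enumerate}

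\textbf{Step 2: use $w\in\tW^K$.} For $\tilde\alpha=(\alpha,k)\in K$ we have $w(\tilde\alpha)\in\Phi_{\aff}^+$. The action formula gives $w(\alpha,k) = (z(\alpha), k-\langle \nu, z(\alpha)\rangle)$. Since $k\in\{0,1\}$, and $k=1$ forces $\alpha$ to be negative (it is minus a highest root), a short case check shows $\ell(w,-\alpha)\geq 0$. So the roots $-\alpha$ in condition (2) never violate condition (1). However, some of them may satisfy $\ell(w,-\alpha)=0$, and there the choice of chamber is not forced.

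\textbf{Step 3: build the chamber from a generic functional.} Let $F_K\subseteq\overline{\mathfrak a}$ be the facet fixed by $W_K$, and pick $v_K\in F_K$. For small $\varepsilon>0$ and a generic direction $d$ pointing into $\mathfrak a$, put $p = v_K+\varepsilon d\in\mathfrak a$. The point $w(p)$ lies in the open alcove $w(\mathfrak a)$. Define $u$ as the chamber of the regular linear functional $\beta\mapsto \langle w(p) - \nu, z(\beta)\rangle$ after a suitable rescaling, so that $\beta\in u(\Phi^+)$ iff this quantity, corrected by the $\delta$-terms, is positive. Concretely, the plan is to show that $\ell(w,\beta)$ equals the floor-type integer part of the affine function $\beta\mapsto\langle w(p), z(\beta)\rangle$ shifted by $\delta_{\Phi^-}(\beta)$. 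This is the computation in the proof of Proposition~\ref{prop:acute}, read in the variable $y$. From it, condition (1) follows for any $p\in\mathfrak a$; this is essentially the existence of the standard acute presentation, rerun with a different base point.

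For condition (2), one uses that $v_K$ lies on the walls $H_{\tilde\alpha}$ for $\tilde\alpha\in K$, and that $d$ points into $\mathfrak a$. Then the sign of the functional on $-\alpha$ is dictated, to first order in $\varepsilon$, by the side of $H_{\tilde\alpha}$ on which $p$ lies. This should give exactly $-\alpha\in u(\Phi^+)$ in the tie cases $\ell(w,-\alpha)=0$.

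\textbf{Main obstacle.} The hard part is the tie-breaking in Step 3. I have to verify that one generic choice of $p$ handles all the roots of $K$ simultaneously, and also the roots $\beta$ with $\ell(w,\beta)=0$ outside $K$, without destroying condition (1). If this direct bookkeeping becomes messy, the first fallback is a local-modification argument. Start from the standard presentation. While some $(\alpha,k)\in K$ has $y(\alpha)\in\Phi^+$, replace $y$ by $s_{y(\alpha)}y$; the tie $\ell(w,-\alpha)=0$ should preserve acuteness. Termination would follow by induction on the length of $y$ relative to $W_{K\cap\Delta_0}$.
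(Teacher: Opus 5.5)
Your Steps 1 and 2 are correct. With $u=y^{-1}$, acuteness is $\ell(w,\beta)\ge 0$ on $u(\Phi^+)$, the extra condition is $-\alpha\in u(\Phi^+)$, and $w\in\tW^K$ gives $\ell(w,-\alpha)\ge 0$ for each $\tilde\alpha\in K$ separately.

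The gap is in Step 3, and it is structural rather than a matter of bookkeeping. For $p$ in the open alcove $\mathfrak a$ we have $\langle w(p),z(\beta)\rangle=\langle\nu,z(\beta)\rangle+\langle p,\beta\rangle$, where $\langle p,\beta\rangle$ lies in $(0,1)$ or $(-1,0)$ according to the sign of $\beta$. So any sign test of the form ``integer $+\langle p,\beta\rangle>0$'' returns the same $u$ for every $p\in\mathfrak a$, and your choices of $v_K$, $\varepsilon$, $d$ have no effect. (Taken literally, $\langle w(p)-\nu,z(\beta)\rangle=\langle p,\beta\rangle$ has chamber $\Phi^+$. Read as the chamber containing $w(\mathfrak a)$, it gives the standard presentation.)

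Worse, the tie-break has the wrong sign. Suppose $\tilde\alpha=(\alpha,0)\in K$ is a tie with $z(\alpha)\in\Phi^+$. Then $w(\tilde\alpha)=(z(\alpha),0)$, and $\langle w(p),z(\alpha)\rangle=\tilde\alpha(p)>0$ because $p$ lies on the positive side of $H_{\tilde\alpha}$. This puts $+\alpha$, not $-\alpha$, into $u(\Phi^+)$. Concretely, take $w=1$ and $K=\{(\alpha_1,0)\}$. Every $xt^0x^{-1}$ is acute, your recipe outputs $x=y=1$ with $y(\alpha_1)=\alpha_1\in\Phi^+$, and the statement needs something like $s_{\alpha_1}t^0s_{\alpha_1}$.

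The missing idea is to approach $F_K$ from the far side. Let $w_{0,K}$ be the longest element of $W_K$, use the Weyl chamber $x_1(C^+)$ containing $ww_{0,K}(\mathfrak a)$, and set $y=x_1^{-1}z$. Equivalently, take $d$ pointing out of $\mathfrak a$ across every wall $H_{\tilde\alpha}$, $\tilde\alpha\in K$.
\begin{itemize}
\item Condition (2) is then immediate. The root $w(\tilde\alpha)$ is negative on $ww_{0,K}(\mathfrak a)$, and your Step 2 inequality gives $k-\langle\nu,z(\alpha)\rangle\ge 0$, so $\langle\cdot,z(\alpha)\rangle<0$ there.
\item Condition (1) is no longer automatic and needs all of $\Phi_K^+$, not just $K$. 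A violation would give a hyperplane $H_{(\gamma,\delta_{\Phi^-}(\gamma))}$ separating $w(\mathfrak a)$ from $ww_{0,K}(\mathfrak a)$. Such a hyperplane contains $w(F_K)$, so it equals $H_{w(\tilde\beta)}$ for some $\tilde\beta\in\Phi_K^+$. This contradicts $w(\Phi_K^+)\subseteq\Phi_{\aff}^+$.
\end{itemize}
This is precisely the paper's proof. It passes to the maximal coset representative $w_1=ww_{0,K}$ and takes an acute presentation $x_1t^{\lambda_1}y_1$ of $w_1$. It then uses the Haines--Ng\^o gallery lemma to get $w\in\CC(\mathfrak a,x_1)$. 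Finally, $y=y_1\pr(w_{0,K})$ sends $K$ to negative roots because $y_1$ sends $K$ to positive ones.

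Your fallback does not close the gap either. Replacing $y$ by $s_{y(\alpha)}y=ys_\alpha$ replaces $P=u(\Phi^+)$ by $s_\alpha(P)$. This reverses every $\beta\in P$ with $s_\alpha(\beta)\notin P$, while the tie $\ell(w,\alpha)=0$ controls only $\pm\alpha$. So acuteness is guaranteed only when $\alpha$ is simple in $P$. Successive flips can also undo one another: for adjacent $\alpha_1,\alpha_2\in K$ in type $A_2$, flipping $\alpha_1$ and then $\alpha_2$ puts $\alpha_1$ back into $u(\Phi^+)$. Finally, $(-\theta,1)\in K$ is not reached by an induction inside $W_{K\cap\Delta_0}$.
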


\begin{remark}\label{rmk:WK}
For any $w\in \tW$ and $\tilde\a\in \Phi^+_{\aff}$, $ws_{\tilde\a}>w$ if and only if $w(\tilde{\a})\in \Phi^+_{\aff}$. As a result, for any $ \l \in X_* $, we have (see for example \cite[\S1.3, (1.1)]{HY24})
\begin{align*}
    t^{\l}\in \tW^{K} \iff   \<\l,\a \> \le 0 \text{ for any } \ta = (\a,k) \in K.
\end{align*}
In particular, in the situation of Proposition~\ref{prop:acute-pres}, $t^{y \i(\l)} \in \tW^K$. 
\end{remark}

\begin{proof}
We write $\pr$ for the natural projection maps $\pr: \tW \longrightarrow W_0$ and $\Phi_{\aff} \to \Phi$. 

Let $w_{0,K}\in W_K$ denote the longest element of the group generated by $K$. Then $w_1 := w w_{0,K}$ is a length additive product.
Let $w_1 = x_1 t^{\l_1} y_1$ be any acute presentation. Then $w_1\in\CC(\mathfrak{a},x_1)$ by Proposition \ref{prop:acute}. By \cite[\S5]{HN02}, any minimal gallery from $\mathfrak{a}$ to $w_1(\mathfrak{a})$ is in the $x_1$-direction. Since $\ell(w_1) = \ell(w) + \ell(w_{0,K})$, there is a minimal gallery from $\mathfrak{a}$ to $w_1(\mathfrak{a})$ which passes through $w(\mathfrak{a})$. By \cite[Lemma 5.3]{HN02}, this forces $w \in \CC(\mathfrak{a},x_1)$. By Proposition \ref{prop:acute-pres}, we get an acute presentation $w = x_1 t^{\l}y$ for some $\l$ and $y$. 

It is clear that $y = y_1\pr(w_{0,K})$. By \cite[Lemma~2.9]{Sch23}, we have $y_1(\a) \in \Phi^+$ for all $\ta = (\a,k) \in K$. Note that $y(\a ) = y_1 \pr(w_{0,K}(\ta ) ) = y_1 \pr(-\tb)  = -  y_1 (\pr(\tb)) $. Here, $\tb= - w_{0,K}(\ta) \in K $. Hence $y(\a) \in \Phi^-$.
\end{proof}

\subsection{Top two layers of $\Adm(\mu)^K$}

Let $\hat{\eta}:\mathscr{E}(\widehat{\Adm(\mu)^K}) \to (\L,\preceq)$ be the edge labeling constructed in \S\ref{sec:2.3}. Let $J=\{\a \in\D_0 \mid \<\mu,\a \> = 0\}$.

We now provide an explicit description of the top two layers of $\Adm(\mu)^K$. 

\begin{proposition}\label{prop:top-two}Let $w \in \Adm(\mu)^K$ with $ \ell(w) = \<\mu,2\rho\> -1$. Let $z_1t^{\l}z_2^{-1}$ be an acute presentation of $w$ as in Proposition \ref{prop:acute-pres}. Then,
\begin{enumerate}
\item there is an edge $z_1\to z_2 $ in $\text{QBG}(\CR)$, $\l = \mu - \wt(z_1,z_2)$ and $\wt(z_1,z_2)\notin \Phi_J$. 

\item $w$ is covered by exactly two elements $t^{z_1(\mu)}$ and $t^{z_2(\mu)}$ in $\Adm(\mu)$.

\item Let $zW_J$ be the smallest element in $W/W_J$ such that $w \le t^{z(\mu)}$ and $t^{z(\mu)}\in\tW^K$. Then $\hat{\eta}(t^{z(\mu)} \gtrdot w)$ is a type II affine root. 
\end{enumerate}

\end{proposition}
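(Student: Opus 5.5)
Throughout, $w=z_1t^{\l}z_2^{-1}$ is the acute presentation of Proposition~\ref{prop:acute-pres}, so $z_2^{-1}(\a)\in\Phi^-$ for all $(\a,k)\in K$. The plan is to read off $\ell(w)$ from Corollary~\ref{cor:lengthViaAcutePres} and compare it with the admissibility inequality of Proposition~\ref{prop:admMembershipTest}. The first gives
\[
\<\mu,2\rho\>-1=\ell(w)=\ell(z_1)+\<\l,2\rho\>-\ell(z_2).
\]
The second gives $\wt(z_1,z_2)\le\mu-\l$. Next I will combine these with the standard length–weight identity for shortest QBG paths, $\ell(z_2)-\ell(z_1)=d(z_1,z_2)-\<\wt(z_1,z_2),2\rho\>$. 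Substituting yields
\[
\<\mu-\l-\wt(z_1,z_2),2\rho\>+d(z_1,z_2)=1.
\]
Both terms are nonnegative integers, since $\mu-\l-\wt(z_1,z_2)$ is a nonnegative combination of coroots. Hence either $d=1$ and $\l=\mu-\wt(z_1,z_2)$, or $d=0$ and $\mu-\l$ is a simple coroot. In the second case $z_1=z_2$, so $w=z_1t^{\l}z_1^{-1}=t^{z_1(\l)}$ is a translation with $\l\le\mu$. Such an element has length $\<\l^+,2\rho\>$, which cannot equal $\<\mu,2\rho\>-1$ by parity, since $\mu-\l^+$ lies in the coroot lattice and $\<\a^\vee,2\rho\>$ is even only in non-simply-laced situations. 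I expect to exclude this case more robustly as follows: translation elements in $\Adm(\mu)$ other than the $t^{z(\mu)}$ have length at most $\<\mu,2\rho\>-2$, because $t^{\l^+}\le t^\mu$ and a cover would force $\mu-\l^+$ to be a coroot $\beta^\vee$ with $\<\beta^\vee,2\rho\>=1$, which is impossible. So the QBG edge exists. If $\wt(z_1,z_2)=\a^\vee\in\Phi_J^\vee$ for a quantum edge, then $\<\mu,\a\>=0$ would make $\l=\mu-\a^\vee$ fail the acuteness inequality at $\a$. This settles (1).

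For (2), I will first show that $t^{z_1(\mu)}$ and $t^{z_2(\mu)}$ lie above $w$, using Theorem~\ref{thm:Felix} together with the acute presentations $z_it^{\mu}z_i^{-1}$ of Lemma~\ref{lem:trans}. The required inequalities are $\wt(z_1,z_i)+\wt(z_i,z_2)\le\mu-\l=\wt(z_1,z_2)$. These hold because one of the two terms is zero and the other is the edge weight. By length these are covers. The two are distinct because $\wt(z_1,z_2)\notin\Phi_J$, which forces $z_1W_J\ne z_2W_J$. Conversely, suppose $w<t^{z(\mu)}$. I will apply Theorem~\ref{thm:Felix} with $x'=y'^{-1}=zu$ from Lemma~\ref{lem:trans}, which gives
\[
\wt(z_1,zu)+\wt(zu,z_2)\le\wt(z_1,z_2).
\]
By Lemma~\ref{lem:Pos}(2) the path $z_1\to zu\to z_2$ is then shortest, so $zu\in\{z_1,z_2\}$ because $d(z_1,z_2)=1$.

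For (3), the label of $t^{z(\mu)}\gtrdot w$ is the affine root $\tilde\a$ with $t^{z(\mu)}=ws_{\tilde\a}$. I will compute it explicitly in the two cases. If $z_1\rightharpoonup z_2=z_1s_\a$ is a Bruhat edge, then $\l=\mu$, and the minimal coset is the smaller one, $zW_J=z_1W_J$. This should give $\tilde\a=(z_2(\a)\text{-type root},0)$ with finite part positive, hence type II once we check it is not in $\Phi_K$. For a quantum edge, $\tilde\a$ has constant term $1$ and finite part $\pm z_2(\a)$, and the minimality of $zW_J$ together with the condition $t^{z(\mu)}\in\tW^K$ (Remark~\ref{rmk:WK}) should decide which one arises. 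I expect this sign bookkeeping to be the main obstacle: deciding which of $t^{z_1(\mu)},t^{z_2(\mu)}$ is smaller, and which of them lie in $\tW^K$. I would first use $z_2^{-1}(K)\subseteq\Phi^-$ to show that the relevant finite root is not in $\Phi^-\setminus\Phi^-_{K\cap\Delta_0}$ with positive constant term, and is not in $\Phi_K$.
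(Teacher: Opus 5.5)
Part (3) is where the proposal breaks down: you do not prove it, and your sketch is wrong in the Bruhat-edge case $z_1\rightharpoonup z_2=z_1s_\alpha$. There the smallest coset is \emph{not} always $z_1W_J$. The definition of $zW_J$ also requires $t^{z(\mu)}\in\tilde W^K$, and $t^{z_1(\mu)}$ may violate this. In that case the label $(z_1(\alpha),0)$ of $t^{z_1(\mu)}\gtrdot w$ lies in $K$, so your intended check ``not in $\Phi_K$'' fails. The missing steps are:
\begin{enumerate}
\item[(a)] $t^{z_2(\mu)}\in\tilde W^K$ always. By Remark~\ref{rmk:WK} this holds because $\langle z_2(\mu),\beta\rangle=\langle\mu,z_2^{-1}(\beta)\rangle\le 0$ for $(\beta,k)\in K$, since Proposition~\ref{prop:acute-pres} gives $z_2^{-1}(\beta)\in\Phi^-$.
\item[(b)] Suppose $t^{z_1(\mu)}\notin\tilde W^K$, and pick $\tilde\gamma\in K$ with $t^{z_1(\mu)}s_{\tilde\gamma}<t^{z_1(\mu)}$. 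Since $ws_{\tilde\gamma}>w$, the lifting property gives $w\le t^{z_1(\mu)}s_{\tilde\gamma}$. Equal lengths then force $t^{z_1(\mu)}=ws_{\tilde\gamma}$, hence $\tilde\gamma=(z_1(\alpha),0)$ and $z_1(\alpha)\in K\cap\Delta_0$.
\end{enumerate}
In the situation of (b), $z=z_2$ and the label is $(-z_1(\alpha),\langle\mu,\alpha\rangle)$. This root has negative finite part and positive constant term, yet it is type II precisely because $-z_1(\alpha)\in\Phi^-_{K\cap\Delta_0}$. This case is the reason for that exclusion in the definition of type I roots.

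For a quantum edge, $z=z_2$: it is the smaller coset and lies in $\tilde W^K$ by (a). The label is $(z_2(\alpha),\langle\mu,\alpha\rangle-1)$, not a root with constant term $1$. In every case the label is not in $\Phi_K$, because $w\neq t^{z(\mu)}$ are both minimal in their $W_K$-cosets, so $w^{-1}t^{z(\mu)}\notin W_K$.

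Your treatment of (1) and (2) is a genuinely different and more self-contained route than the paper's. The paper reads off the four possible shapes of a cover $t^{\mu'}\gtrdot w$, and their labels, from \cite[Proposition~4.5]{Sch24}. Your argument works: the identity $\langle\mu-\lambda-\mathrm{wt}(z_1,z_2),2\rho\rangle+d(z_1,z_2)=1$, followed by Theorem~\ref{thm:Felix} and Lemma~\ref{lem:Pos}(2) to force $zu\in\{z_1,z_2\}$. Two repairs are needed.
\begin{enumerate}
\item[(i)] $\langle\beta^\vee,2\rho\rangle$ is twice the height of $\beta^\vee$, hence even in every type, so $d=0$ is excluded immediately. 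Your remarks about a simple coroot and about non-simply-laced types are incorrect, though harmless.
\item[(ii)] You show $\alpha\notin\Phi_J$ only for a quantum edge. For a Bruhat edge $\mathrm{wt}(z_1,z_2)=0$, so ``$\mathrm{wt}(z_1,z_2)\notin\Phi_J$'' carries no information. Yet you use it to get $z_1W_J\neq z_2W_J$, i.e.\ that there are exactly two covers. The fix is the acuteness inequality at $\alpha$ with $\lambda=\mu$, $z_1(\alpha)\in\Phi^+$ and $z_2(\alpha)\in\Phi^-$, which gives $\langle\mu,\alpha\rangle\ge 1$.
\end{enumerate}
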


\begin{remark}
Part (3) is essential for the proof of Theorem \ref{thm:main}. And this is the reason of the explicit construction of the edge labeling $\hat{\eta}$ in \S\ref{sec:2.3}.
\end{remark}

\begin{proof}
Let $t^{\mu'}\gtrdot w$ be a Bruhat covering, where $\mu' \in W_0(\mu)$, the $W_0$-orbit of $\mu$. Apply \cite[Proposition 4.5]{Sch24} and Lemma \ref{lem:trans}. There exists $\a\in\Phi^+$ such that one of the following conditions happens.

\begin{enumerate}[(i)]
\item $z_2 = z_1 s_\a$, $\l=\mu$, $\mu' = z_1(\mu)$ and there is a Bruhat edge $ z_1 \rightharpoonup z_2 $. In this case, $\hat{\eta}(t^{\mu'} \gtrdot w) = (z_1 (\a) , 0)$.

\item $z_2 = z_1 s_\a$, $\l=\mu - \a^{\vee}$, $\mu' = z_1(\mu)$, there is a quantum edge $ z_1 \rightharpoondown z_2 $. In this case, $\hat{\eta}(t^{\mu'} \gtrdot w) = ( z_1(\a) , 1)$.

\item $z_2 = z_1 s_\a$, $\l=\mu$, $\mu' = z_2 (\mu)$, there is a Bruhat edge $ z_1 \rightharpoonup z_2$. In this case, $\hat{\eta}(t^{\mu'} \gtrdot w) = (-z_1(\a), \< \mu,\a\>)$.
    
\item $z_2 = z_1 s_\a$, $\l=\mu - \a^{\vee}$, $\mu' = z_2 (\mu)$, there is a quantum edge $ z_1 \rightharpoondown z_2$. In this case, $\hat{\eta}(t^{\mu'} \gtrdot w) = (-z_1(\a), \< \mu,\a\>-1)$.
    
\end{enumerate}

We first prove that $\a\notin\Phi_J$.

In case (i) and (iii), as $z_1 t^{\mu} s_\a z_1^{-1} $ is an acute presentation of $w$, we get
$$  \Phi^- (z_1 (\a)) + \< \mu , \a\> - \Phi^-( z_1 s_\a (\a)) \ge 0.$$
Since $z_1(\a) \in \Phi^+$, we have $0 + \<\mu,\a\> -1  \ge 0$ and hence $\a\notin\Phi_J$. 

In case (ii) and (iv), as $z_1 t^{\mu-\a^{\vee}} s_\a z_1^{-1} $ is an acute presentation of $w$, we get
$$   \Phi^- (z_1(\a)) + \< \mu -\a^{\vee}, \a\> - \Phi^-( z_1 s_\a (\a)) \ge 0.$$
Since $z_1(\a) \in \Phi^-$, we have $1 + \<\mu,\a\> - 2 + 0 \ge 0$ and hence $\a\notin\Phi_J$. 

Note that statements (1) and (2) follow directly from the above explicit descriptions and the definition of $\hat{\eta}$. We now prove (3). 

We first consider the case where $ z_1 \to z_2 $ is a quantum edge. Then $z_2(\a)\in\Phi^+$. In this case, $\hat{\eta}(t^{z_2(\mu)} \gtrdot w) = (z_2 (\a) , \<\mu,\a\>-1)$ is a type II affine root and $t^{z_2(\mu)} \in \tW^K$ as desired.

We now consider the case where $ z_1 \to z_2 $ is a Bruhat edge. Then $z_1(\a) \in\Phi^+$. Suppose $t^{z_1(\mu)} \in \tW^K$, then $\hat{\eta}(t^{z_1(\mu)} \gtrdot w) = ( z_1(\a) , 1)$ is a type II root as desired. Now suppose $t^{z_1(\mu)} \notin \tW^K$, then we must have $t^{z_1(\mu)} = ws_{\tb} $ for some $\tb \in K$. Then $(z_1(\a),0) = \tilde\b$. In particular, $ z_1(\a) \in K\cap \Delta_0$. It follows that $\hat{\eta}(t^{z_2(\mu)} \gtrdot w) = ( -z_1(\a), \<\mu,\a\>)$ is a type II affine root. 
\end{proof}

\section{Proof of dual EL-shellability}\label{sec:4}

\subsection{Overview of the proof strategy}

We now prove Theorem \ref{thm:main}: The edge labeling $\hat{\eta}$ defined in \S\ref{sec:2.3} is a dual EL-labeling on $\widehat{\Adm(\mu)^K}$. This requires showing that for every interval in $\widehat{\Adm(\mu)^K}$:
\begin{itemize}
    \item There exists a unique label-increasing maximal chain.
    \item This chain is lexicographically minimal among all maximal chains.
\end{itemize}

For intervals $[w_1, w_2]^K$ with $w_1, w_2 \in \Adm(\mu)^K$, Proposition \ref{prop:Dyer} applies directly, giving us the desired properties. The critical case is the intervals $[w, \hat{1}]^K$ for $w \in \Adm(\mu)^K$. We shall first construct a label-increasing maximal chain from $\hat 1$ to $w$ that is lexicographically minimal and then show that it is indeed the unique label-increasing chain. It requires leveraging the quantum Bruhat graph and acute presentations developed in Section \S\ref{sec:3}. 

\subsection{The set $\Sigma_w^{J,K}$}\label{sec:4.1}
Let $w\in\Adm(\mu)^K$. In this subsection, we study the maximal elements of $\Adm(\mu)^K$ that are greater than or equal to $w$. 

Fix an acute presentation $ w=x t^{\l}y$ satisfying the condition in Proposition \ref{prop:acute-pres}. Then $t^{y^{-1}(\mu)}\in \tW^K$. Define
\begin{align*}
\Sigma_w  := \{ z \in W_0 \mid \wt(x, z ) + \wt( z,y^{-1}) \le \mu - \l\}.
\end{align*}
This set certainly depends on the choice of the acute presentation. 

By \cite[Corollary 3.4]{HY24}, the set $\{ z \in W_0 \mid  \wt(x,z) \le \mu - \l \}$ has a unique minimal element $z_{\min}$. We denote this element by $z_{\min}$. By Proposition~\ref{prop:admMembershipTest}, we get $\wt(x , y^{-1}) \le \mu -\l$. Then $y^{-1} \ge z_{\min}$ by definition of $z_{\min}$. Hence $\wt(z_{\min}, y^{-1}) = 0 $ by Lemma \ref{lem:Pos} (3). It follows that $z_{\min} \in \Sigma_w$. It is clear that $\Sigma_w \subseteq \{z \in W_0 \mid \wt(x,z) \le \mu - \l\}$. Thus $z_{\min}$ is the unique minimal element of $\Sigma_w$. 

Recall that $J  = \{ \a \in\D_0 \mid \<\mu,\a \>=0\}$ and that $W^{J,K} =\{a \in W^J \mid t^{a(\mu)} \in \tW^K\}$. Define
\begin{align*}
    \Sigma_w^{J,K}  &= \{a \in W^{J,K} \mid  w \le  t^{a(\mu)}\}.
\end{align*}
The set $\Sigma^{J, K}_w$ is in natural bijection with the set of maximal elements in $\Adm(\mu)^K$ that are greater than or equal to $w$. Next, we identify a distinguished element in $\Sigma_w^{J,K}$ that will serve as the gateway through which our label increasing chain passes.

\begin{lemma}\label{lem:min2}
The set $\Sigma_w^{J,K}$ has a unique minimal element $a_{\min}^K$. This element $a_{\min}^K$ is the unique element in $\pr(W_{K}) z_{\min} W_J\cap W^{J,K}$. Here, $\pr$ is the projection $\tW\to W_0$.
\end{lemma}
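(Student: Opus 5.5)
We first translate membership in $\Sigma_w^{J,K}$ into a statement about $\Sigma_w$. For $a\in W^{J,K}$, the element $t^{a(\mu)}$ has acute presentations $a u t^{\mu} u^{-1} a^{-1}$ with $u\in W_J$ (Lemma~\ref{lem:trans}). By Theorem~\ref{thm:Felix}, applied to the acute presentation $w=xt^\lambda y$, we have $w\le t^{a(\mu)}$ if and only if there is some $u\in W_J$ with
\[
\wt(x,au)+\wt(au,y^{-1})\le \mu-\lambda ,
\]
that is, if and only if $aW_J\cap\Sigma_w\neq\emptyset$. For this step we need to check that it suffices to test acute presentations of $t^{a(\mu)}$, which is the last clause of Theorem~\ref{thm:Felix}. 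Hence
\[
\Sigma_w^{J,K}=\{a\in W^{J,K}\mid aW_J\cap \Sigma_w\ne\emptyset\}.
\]
Since $z_{\min}\in\Sigma_w$ is its unique minimal element, every $z\in\Sigma_w$ satisfies $z\ge z_{\min}$.

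Next we show that $\Sigma_w$, or at least the relevant cosets, is stable under left multiplication by $\pr(W_K)$, in a controlled way. The plan uses the fact that $y(\alpha)\in\Phi^-$ for all $(\alpha,k)\in K$ (Proposition~\ref{prop:acute-pres}), together with $w\in\tW^K$. These should give the following: for $z\in\Sigma_w$ whose coset $zW_J$ corresponds to a translation $t^{z(\mu)}$ not in $\tW^K$, there is a simple $\tilde\beta\in K$ with $t^{z(\mu)}s_{\tilde\beta}<t^{z(\mu)}$. Then $w\le t^{z(\mu)}$ implies, by the lifting property (using $ws_{\tilde\beta}>w$), that $w\le t^{z(\mu)}s_{\tilde\beta}\cdot$(appropriate element). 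This can be rewritten as $w\le t^{s_\beta z(\mu)}$, where $s_\beta=\pr(s_{\tilde\beta})$ and $s_\beta z<z$ modulo $W_J$. Iterating this strictly decreases length, so every coset meeting $\Sigma_w$ can be pushed down inside $\pr(W_K)zW_J$ to a representative in $W^{J,K}$ that still lies above $w$.

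Third, we identify the minimum. The set $\pr(W_K)z_{\min}W_J$ is a double coset of the parabolic subgroup $\pr(W_K)$ (a finite reflection subgroup of $W_0$, parabolic up to conjugacy) and $W_J$. The elements $a$ with $a\in W^J$ and $t^{a(\mu)}\in\tW^K$ are, by Remark~\ref{rmk:WK}, those with $\langle a(\mu),\alpha\rangle\le 0$ for $(\alpha,k)\in K$. Within this double coset there is exactly one such $a$: it corresponds to the unique $\pr(W_K)$-antidominant element of the orbit $\pr(W_K)z_{\min}(\mu)$. So $\pr(W_K)z_{\min}W_J\cap W^{J,K}=\{a^K_{\min}\}$. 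To show $a^K_{\min}\in\Sigma_w^{J,K}$, we apply the descent procedure of the previous step starting from $z_{\min}$.

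For minimality, take any $b\in\Sigma_w^{J,K}$ and pick $z\in bW_J\cap\Sigma_w$, so that $z\ge z_{\min}$. We want $b\ge a^K_{\min}$. The approach is to compare the projections to $W_0/W_J$: we have $zW_J\ge z_{\min}W_J$, and $b$ is $\pr(W_K)$-"antidominant" in its $\pr(W_K)$-orbit. A standard parabolic argument (Deodhar-type lifting in $\pr(W_K)\backslash W_0/W_J$) should then give $b\ge a^K_{\min}$.

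I expect the main obstacle to be the descent step. We need to verify that left multiplication by simple reflections of $K$ preserves the condition $w\le t^{a(\mu)}$ in the right direction. This is subtle when $K$ contains the affine simple root $(-\theta,1)$, since then $\pr(W_K)$ is not a standard parabolic of $W_0$. I would first try to handle it entirely in $\tW$ via the $Z$-property/lifting for $W_K$-double cosets, using $\Adm(\mu)^K=W_K\Adm(\mu)W_K\cap\tW^K$.
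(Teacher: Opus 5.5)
Your reformulation $\Sigma_w^{J,K}=\{a\in W^{J,K}\mid aW_J\cap\Sigma_w\neq\emptyset\}$ agrees with the paper. So does your identification of the unique element of $W^{J,K}$ in $\pr(W_K)z_{\min}W_J$ as the $\pr(K)$-antidominant point of $\pr(W_K)z_{\min}(\mu)$, and so does the basic move $w\le t^{z(\mu)}s_{\tilde\beta}<s_{\tilde\beta}t^{z(\mu)}s_{\tilde\beta}=t^{s_\beta z(\mu)}$. But you have the direction of this move wrong. For $\tilde\beta=(\beta,0)\in K\cap\Delta_0$, the condition $t^{z(\mu)}s_{\tilde\beta}<t^{z(\mu)}$ means $\langle z(\mu),\beta\rangle>0$, i.e.\ $z^{-1}(\beta)\in\Phi^+\setminus\Phi_J$. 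Hence $s_\beta z>z$ and the coset moves \emph{up}; only the moves with $(-\theta,1)$ go down. So ``iterating strictly decreases length'' is false, and termination needs another measure. For example, the number of positive roots of the finite system $\pr(\Phi_K)$ pairing positively with the current cocharacter drops by one at each step. Existence of $a^K_{\min}$ in $\Sigma_w^{J,K}$ survives this fix.

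The genuine gap is minimality, which is the substance of the lemma. You reduce it to the (true) claim that $b\in W^{J,K}$ with $b\ge a_1$ (the minimal representative of $z_{\min}W_J$) forces $b\ge a^K_{\min}$. You then invoke a ``standard Deodhar-type lifting in $\pr(W_K)\backslash W_0/W_J$'', but no such standard result applies. The group $\pr(W_K)$ need not be parabolic even up to conjugacy: for $C_2$ and $K=\{\alpha_2,(-\theta,1)\}$ it is generated by the reflections in two orthogonal long roots. Conjugation would not respect the Bruhat order anyway. Because the directions are reversed, you also place the difficulty in the wrong case: the $(-\theta,1)$ case is the easy one.

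The missing idea, as in the paper, is to run the descent starting from $a_1$ while carrying the invariant ``$a_j$ is a lower bound of $\Sigma^{J,K}_w$''.
\begin{enumerate}
\item A $(-\theta,1)$-move gives $a_{j+1}\le s_\theta a_j<a_j$, so the invariant is automatic.
\item A move with $\alpha\in K\cap\Delta_0$ gives $a_{j+1}=s_\alpha a_j>a_j$. Here you need that every $b\in\Sigma^{J,K}_w$ has $\langle b(\mu),\alpha\rangle\le 0$ by Remark~\ref{rmk:WK}, i.e.\ $b^{-1}(\alpha)\in\Phi^-\cup\Phi_J^+$. If $s_\alpha b<b$, the lifting property gives $s_\alpha a_j\le b$ at once. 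If instead $s_\alpha b=bs_{b^{-1}(\alpha)}\in bW_J$, it gives $s_\alpha a_j\le s_\alpha b$, hence $s_\alpha a_j\le b$ after projecting to $W^J$.
\end{enumerate}
The terminal element is then both in $\Sigma^{J,K}_w$ and a lower bound of it, hence its unique minimum.
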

\begin{proof}
Set $\Sigma_w^J := \{a \in W^J \mid w \le t^{a(\mu)}\}$. Then $\Sigma_w^{J,K} = \{a \in \Sigma_w^J \mid t^{a(\mu)} \in \tW^K\}$. Let $a_1$ be the minimal representative in the coset $z_{\min}W_J$. For $a \in W^J$, by Theorem~\ref{thm:Felix} and Lemma~\ref{lem:trans}, $w\leq t^{a(\mu)}$ if and only if $au \in \Sigma_w$ for some $u\in W_J$. Therefore, $a_1$ is the unique minimal element in $\Sigma_w^J$. In particular, $a_1$ is a lower bound of $\Sigma_w^{J,K}$.

Suppose we are given an element $a_j \in \Sigma_w^J$ that is a lower bound of $\Sigma_w^{J,K}$. If $t^{a_j(\mu)} \in \tW^K$, then $a_j$ is the unique minimal element in $\Sigma_w^{J,K}$ as desired. Suppose $t^{a_j(\mu)}\notin \tW^K$. We construct $a_{j+1}$ with the same properties as follows.

Since $t^{a_j(\mu)}\notin \tW^K$, there is $\ta = (\a,k) \in K$ such that $t^{a_j(\mu)} s_{\ta} < t^{a_j(\mu)}$. In this case, we have $\< a_j(\mu),\a \> >0$, or equivalently, $a_j^{-1}(\a)\in \Phi^+ - \Phi_J$. Since $w\in \tW^K$ is assumed, we get
\begin{align*}
    w \leq t^{a_j(\mu)}s_{\ta}  < s_{\ta} t^{a_j(\mu)} s_{\ta}  =  t^{s_{\a} a_{j}(\mu)}.
\end{align*}
Define $a_{j+1}$ as the minimal representative in the coset $s_{\a} a_j$. Then $a_{j+1} \in \Sigma_w^J$. We claim that $a_{j+1}$ is also a lower bound of $\Sigma_w^{J,K}$.

Consider first the case where $\tilde \alpha  = (-\theta,1)$ for the highest root $\theta$ of an irreducible component of $\Phi$. Since $a_j^{-1}(-\th) \in \Phi^+$, we have $a_{j+1}\le s_{\th} a_j <a_j$. Hence $a_{j+1}$ is also a lower bound of $\Sigma_w^{J,K}$.

Consider next the case $\ta = (\a,0) \in \D_0$. Since $a_j^{-1}(\a)\in \Phi^+-\Phi_J$, we have $a_j < s_{\a} a_{j}$ and $s_{\a} a_{j} \in\tW^J$. Hence $a_{j+1} = s_{ \a} a_j$ by definition. Let $b \in \Sigma_w^{J,K}$. Since $t^{b(\mu)} \in \tW^K$, we have $\<b(\mu),\a\>\le0$ and hence $b^{-1}(\a) \in \Phi^- \cup \Phi_J^+$. In either case, it is straightforward to see that $a_{j+1}\le b$. This prove that $a_{j+1}$ is a lower bound of $\Sigma_w^{J,K}$.


We iterate this procedure until we end with an element $a_n\in \Sigma_w^{J,K}$ that is a lower bound of $\Sigma_{w}^{J,K}$ (the fact that $K$ is spherical easily implies that the procedure terminates). Then $a_n$ is the unique minimal element of $\Sigma_w^{J,K}$.

Finally, by the construction, we see that all $a_j$ lie in the double coset $\pr(W_K) z_{\min} W_J$. On the other hand, by \cite[Lemma 1.3]{HY24}, the intersection $\pr(W_{K}) z_{\min} W_J\cap W^{J,K}$ contains only one element.
\end{proof}

\subsection{Constructing the increasing chain}
We now construct a candidate for the unique increasing chain in $[w, \hat{1}]^K$. Let $a_{\min}^K$ be as in Lemma \ref{lem:min2}.
\begin{itemize}
    \item Interior segment: Consider the interval $[w, t^{a_{\min}^{K}(\mu)}]^K$. By Proposition \ref{prop:Dyer}, there exists a unique label-increasing maximal chain
\[
p_w: t^{a_{\min}^K(\mu)} \gtrdot w_r \gtrdot w_{r-1} \gtrdot \cdots \gtrdot w_1 = w
\]
in this interval, and this chain is lexicographically minimal among all chains from $t^{a_{\min}^{K}(\mu)}$ to $w$.

\item Augmented edge: Insert the edge $\hat{1} \gtrdot t^{a_{\min}^{K}(\mu)}$ labeled by $\eta_{a_{\min}^{K}}$ to obtain the full chain:
\begin{align}\label{eq:pw}
\hat p_w: \hat{1} \gtrdot t^{a_{\min}^{K}(\mu)} \gtrdot w_r \gtrdot w_{r-1} \gtrdot \cdots \gtrdot w_1 = w.
\end{align}
\end{itemize}

Since $a_{\min}^K$ is the minimal element in $W^{J,K}$ with $w \le t^{a_{\min}^K(\mu)}$, by Proposition \ref{prop:top-two} (3), the subchain $\hat{1} \gtrdot t^{a_{\min}^K(\mu)} \gtrdot w_{a_{\min}^K,r}$ is label-increasing. Hence, the concatenation \eqref{eq:pw} is label-increasing. By construction of $\hat{\eta}$, the chain \eqref{eq:pw} is lexicographically minimal among all maximal chains in $[w, \hat{1}]^K$.

\subsection{Uniqueness of the label increasing chain} To complete the proof of Theorem \ref{thm:main}, it remains to prove that \eqref{eq:pw} is the unique label-increasing maximal chain in $[w,\hat{1}]^K$. Let $a \in \Sigma_w^{J,K}$ with $a\ne a_{\min}^K$ and $$p_{a,w}: t^{a(\mu)} \gtrdot w_{a, r} \gtrdot w_{a, r-1} \gtrdot \cdots \gtrdot w_{a, 1} = w$$ be the unique label-increasing maximal chain in $[w,t^{a(\mu)}]^K$. It suffices to prove that $\hat{1}\gtrdot t^{a(\mu)} \gtrdot w_{a,r}$ is not label-increasing, or equivalently, $ t^{a(\mu)} \gtrdot w_{a,r}$ is labeled by a type I root. 

We shall construct an element $w'$ with 
\begin{enumerate}
    \item $t^{a(\mu)} \gtrdot w'$;
    \item $w' \ge w$ and
    \item $t^{a(\mu)}  \gtrdot w'$ is labeled by a type I root.
\end{enumerate}

Concatenating $t^{a(\mu)}\gtrdot w'$ with any chain from $w'$ to $w$, we obtain a chain from $t^{a(\mu)}$ to $w$ starting with a type I root. Thus, the lexicographically minimal chain $p_{a,w}$ also starts with a type I root. This then completes the proof of Theorem \ref{thm:main}.

The construction of $w'$ constitutes the heart of our argument, where all the previously developed results come together.

Note that $a \in \Sigma_w^{J,K}$. Let $u$ be a minimal element in $W_J$ such that $au \in \Sigma_w$. That is, $\wt(x , au) + \wt(au,y^{-1}) \le \mu - \l$. We consider two cases separately.
See Figure 1 and Figure 2 for the elements and paths in $\text{QBG}(\CR)$ being considered in case 1 and case 2 respectively. 

\begin{figure}[t]
\centering
\begin{minipage}{0.45\textwidth}
\centering

\begin{tikzpicture}
    \node (y) at (-2,2) {$y^{-1}$};
    \node (x) at (2,2) {$x$};
    \node (au) at (0,1) {$a u$};
    \node (z) at (-0.5,0) {$z$};
    
    \draw[->] (x) -- (au) ;
    \draw[->] (au) -- (z)node[right,midway] {\tiny $\a$};
    \draw[->] (z) -- (y);
\end{tikzpicture}
\caption{}
\end{minipage}
\hspace{0.1cm} 
\begin{minipage}{0.45\textwidth}
\centering
\begin{tikzpicture}
    \node (y) at (-2,1.9) {$y^{-1}$};
    \node (x) at (2,1.9) {$x$};
    \node (au) at (-.9,1) {$a u$};
    \node (z) at (-0.4,0) {$z$};
    \node (zmin) at (0.8  , -0.675) {$z_{min}$};
    \draw[->]  (au) -- (y);
    \draw[->] (x) -- (au) ;
    \draw[->] (z) -- (au)node[right,pos=0.4] {\tiny $\a$};
    \draw[->] (zmin) -- (z) ;
    \draw[->] (x) -- (zmin);
\end{tikzpicture}\vspace{-.2cm}
\caption{}
\end{minipage}\vspace{-.75cm}
\end{figure}

\textbf{Case 1:} $\wt( au,y^{-1}) >0$.

By Proposition \ref{prop:downup}, there is a path of \enquote{down-up type} from $au$ to $y^{-1}$. In particular, there is a quantum edge $au \rightharpoondown  z$ in $\QBG(\CR)$ such that $\wt(au, y^{-1}) =  \alpha^\vee + \wt(z,y^{-1})$, where $\alpha\in \Phi^+$ is the positive root satisfying $z =  aus_\alpha$. Since this is a quantum edge, we get $z < au$ and $ z(\alpha)\in \Phi^+$. By choice of $u$, this implies $\a \notin\Phi_J $ (as certainly $z \in \Sigma_w$). Therefore, $\<\mu,\a\> >0$. Set
\begin{align*}
    w' := t^{au (\mu)} s_{(-z (\alpha),1)} = au t^{\mu - \alpha^\vee } z^{-1}.
\end{align*}
Since $\<\mu,\a\> >0$, we have $w' < t^{au\mu}$. In particular, $\ell(w') \le \langle \mu,2\rho\rangle-1$. On the other hand, by the definition of quantum edges, we have $\ell(au)+\<\mu-\a^\vee, 2 \rho\>-\ell(z)=\langle \mu,2\rho\rangle-1$. By Corollary~\ref{cor:lengthViaAcutePres}, one deduces that the presentation $w'=au t^{\mu - \alpha^\vee } z^{-1}$ is acute and $\ell(w')=\langle \mu,2\rho\rangle-1$. This verifies the condition (1) for $w'$. 

We apply Theorem~\ref{thm:Felix} together with the observation
\begin{align*}
\wt(x, au) + \wt(z, y^{-1}) = \wt(x, au) + \wt(au, y^{-1}) - \alpha^\vee  \leq \mu-\l -  \alpha^\vee 
\end{align*}
to verify the condition (2) for $w'$. 

By \cite[Lemma 7.7 (3)]{Le16}, we have $\wt(s_\b z, y^{-1}) = \wt(z, y^{-1})$ for all $z\in W_0$ and $\b\in K\cap \Delta_0$. Hence $\wt(a' z, y^{-1}) = \wt(z, y^{-1})$ for all $a' \in W_{K \cap \D_0}$. As $\wt(au, y^{-1}) \neq \wt(z, y^{-1})$, $z(\alpha) \not \in \Phi_{K\cap \Delta_0}^+$. So $(-\a, 1)$ is a type I affine root. This verifies the condition (3) for $w'$. 




\textbf{Case 2:} $\wt( au,y^{-1}) =0$. This means $au \leq y^{-1}$. We get $z_{\min}\leq au$. This cannot be an equality, as we get $a\notin \mathrm{pr}(W_{K}) z_{\min} W_J$ from Lemma~\ref{lem:min2}.

Since $z_{\min}<au$, we find a maximal element $z'\in W_{K\cap \Delta_0} z_{\min}$ with $z' \leq au$ (this is the Deodhar lift from \cite{Deod87}). We saw above that $z_{\min}\notin W_{K\cap \Delta_0}au$, so $z' < au$. Now applying \cite[Lemma~7.4]{Lenart2015} to the set $K\cap \Delta_0\subseteq \Delta_0$ and the elements $w_0(au)^{-1}, w_0 (z')^{-1}\in W_0$, we find a lower Bruhat cover $z\lessdot au$ with $y\leq z$ and $z\notin W_{K\cap \Delta_0} au$. Let $\alpha\in \Phi^+$ with $z = aus_\alpha$. Thus, $z(\alpha)\in \Phi^+ - \Phi_{K\cap \Delta_0}^+$.

The conditions $z_{\min}\leq z \leq au\leq y^{-1}$ immediately imply $z\in \Sigma_w$. Hence $z\notin aW_J$ by choice of $u$, i.e.\ $\alpha\notin \Phi_J$. In particular, $\langle \mu,\alpha\rangle>0$. Set
\begin{align*}
    w' := t^{a\mu} s_{(-z\alpha, \langle \mu,\alpha\rangle\rangle)} = z t^\mu (au)^{-1}.
\end{align*}

Note that $\langle a(\mu),z(\alpha)\rangle = \langle au(\mu),aus_\alpha (\alpha)\rangle = -\langle \mu,\alpha\rangle<0$. Thus $w'<t^{a\mu}$. In particular, $\ell(w') \le \langle \mu,2\rho\rangle-1$. On the other hand, we have $\ell(z)+\<\mu, 2 \rho\>-\ell(au)=\langle \mu,2\rho\rangle-1$. By Corollary~\ref{cor:lengthViaAcutePres}, one deduces that the presentation $w' = zt^\mu (au)^{-1}$ is acute with $\ell(w') = \langle \mu,2\rho\rangle-1$. This verifies the condition (1) for $w'$.

We apply Theorem~\ref{thm:Felix} together with the observation
\begin{align*}
    \wt(x, z) + \wt(au, y^{-1}) = \wt(x, z) \leq \wt(x, z_{\min}) \leq \mu-\lambda 
\end{align*}
to verify the condition (2) for $w'$. 

We have shown that $z(\alpha)\notin \Phi_{K\cap \Delta_0}$. Thus, $(-z(\a),\langle \mu,\alpha\rangle)\in \Phi^+_{\aff}$ is a type I root. This verifies the condition (3) for $w'$. 

\section{Cohen-Macaulayness of Local Models}\label{sec:5}

\subsection{Notation}\label{sec:notation}
We adopt the conventions of \cite[§4.1--4.5]{PRS} and \cite[\S 8.1]{HH17}. Let $F$ be a nonarchimedean local field and $L$ the completion of the maximal unramified extension in a fixed separable closure $F^{\mathrm{sep}}/F$. Let $\G_0 \coloneqq \Gal(L^{\mathrm{sep}}/L)$.  

For a connected reductive $L$-group $G$, let $S \subseteq G$ be a maximal $L$-split torus with centralizer $T$---a maximal torus by Steinberg's theorem. Denote the absolute and relative Weyl groups by $W \coloneqq N_G(T)(L^{\mathrm{sep}})/T(L^{\mathrm{sep}})$ and $W_0 \coloneqq N_G(T)(L)/T(L)$, respectively. 

The reduced root datum $\Sigma \coloneqq (X^*, X_*, \Phi, \Phi^\vee)$ of $(G,T)$ yields an affine Weyl group $W_{\mathrm{aff}}(\Sigma)$ isomorphic to the Iwahori-Weyl group $\widetilde{W}(G_{\mathrm{sc}})$ of the simply-connected cover $G_{\mathrm{sc}}$. Let $V \coloneqq X_*(T)_{\G_0} \otimes \mathbb{R} \cong X_*(S) \otimes \mathbb{R}$ denote the apartment for $S$, equipped with a fixed special vertex and the alcove $\bar{\mathbf{a}}$ in the antidominant chamber whose closure contains this vertex.

The Iwahori-Weyl group $\widetilde{W}(G)$ decomposes as: $X_*(T)_{\G_0} \rtimes W_0$ and $W_{\mathrm{aff}}(\Sigma) \rtimes \Omega$, where $\Omega \subseteq \widetilde{W}(G)$ stabilizes $\bar{\mathbf{a}}$. The torsion subgroup $X_*(T)_{\G_0,\mathrm{tors}} \subseteq X_*(T)_{\G_0}$ is central in $\widetilde{W}(G)$, as $W_0$ acts trivially on it.

Let $(-)^\flat$ denote the quotient by $X_*(T)_{\G_0,\mathrm{tors}}$. This induces isomorphisms:
\[
\widetilde{W}(G)^\flat \cong X_*(T)^\flat_{\G_0} \rtimes W_0 \cong W_{\mathrm{aff}}(\Sigma) \rtimes \Omega^\flat,
\]
where $\Sigma$ is reinterpreted as a root datum with $X_* = X_*(T)^\flat_{\G_0}$. For $x \in \widetilde{W}(G)$, write $x^\flat$ for its image in $\widetilde{W}(G)^\flat$.

For a $G$-conjugacy class $\{\mu\} \subseteq X_*(G) \cong X_*(T)/W$, define $\widetilde{\Lambda}_{\{\mu\}}$ to be $B$-dominant representatives in $\{\mu\}$ for some $L$-rational Borel $B \supset T$ and $\Lambda_{\{\mu\}}$ to be the image of $\widetilde{\Lambda}_{\{\mu\}}$ in $X_*(T)_{\G_0}$. Define $$\Adm(\{\mu\}) \coloneqq \{ x \in \widetilde{W}(G) \mid x \leq t^\lambda \ \text{for some} \ \lambda \in \Lambda_{\{\mu\}} \}$$
with Bruhat order $\leq$ from the decomposition $\widetilde{W}(G) = W_{\mathrm{aff}}(\Sigma) \rtimes \Omega$. 

The following result is proved in \cite[\S 8.2]{HH17}.
\begin{lemma}\label{lem:flat_adm}
\begin{enumerate}
    \item For $\tau \in \Omega$ and $x,y \in W_{\mathrm{aff}}(\Sigma)\tau$: $x \leq y \iff x^\flat \leq y^\flat$.
    \item If $\Lambda_{\{\mu\}} \subseteq W_{\mathrm{aff}}(\Sigma)\tau$, then $\Adm(\{\mu\})$ is the preimage of $\Adm(\Lambda^\flat_{\{\mu\}})$ under $W_{\mathrm{aff}}(\Sigma)\tau \to \widetilde{W}(G)^\flat$, where $\Lambda^\flat_{\{\mu\}} \subseteq X_*(T)^\flat_{\G_0}$.
\end{enumerate}
\end{lemma}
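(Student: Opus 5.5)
The proof reduces the Bruhat order on $\widetilde W(G)$ to that on the affine Weyl group $W_{\mathrm{aff}}(\Sigma)$. The key input is that the passage $(-)^\flat$ is injective on $W_{\mathrm{aff}}(\Sigma)$ and only identifies length-zero elements of $\Omega$.

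\emph{Locating the kernel.} Let $N := X_*(T)_{\G_0,\mathrm{tors}}$, the kernel of $\widetilde W(G)\to \widetilde W(G)^\flat$.
\begin{itemize}
\item An element $t^\nu$ with $\nu\in N$ acts on $V = X_*(T)_{\G_0}\otimes\mathbb R$ by translation by the image of $\nu$ in $V$. That image is $0$, so $t^\nu$ acts trivially on $V$.
\item Consequently $t^\nu$ stabilizes $\bar{\mathbf a}$, hence lies in $\Omega$.
\item $W_{\mathrm{aff}}(\Sigma)$ acts simply transitively on alcoves, in particular faithfully on $V$. Hence $N\cap W_{\mathrm{aff}}(\Sigma)=\{1\}$.
\end{itemize}
It follows that $(-)^\flat$ restricts to an isomorphism of $W_{\mathrm{aff}}(\Sigma)$ onto the affine Weyl group inside $\widetilde W(G)^\flat$. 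This isomorphism sends the simple reflections to the simple reflections, since both are the reflections in the walls of $\bar{\mathbf a}$ and $\bar{\mathbf a}$ is unchanged. It is also compatible with the decomposition $\widetilde W(G)^\flat\cong W_{\mathrm{aff}}(\Sigma)\rtimes\Omega^\flat$, with $\tau\mapsto\tau^\flat$ and the conjugation action of $\Omega$ on $W_{\mathrm{aff}}(\Sigma)$ descending to $\Omega^\flat$.

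\emph{Part (1).} Write $x=x'\tau$ and $y=y'\tau$ with $x',y'\in W_{\mathrm{aff}}(\Sigma)$. By definition of the Bruhat order on $W_{\mathrm{aff}}(\Sigma)\rtimes\Omega$, we have $x\le y$ if and only if $x'\le y'$. Likewise $x^\flat=x'^\flat\tau^\flat$ and $y^\flat=y'^\flat\tau^\flat$ lie in the same coset, so $x^\flat\le y^\flat$ if and only if $x'^\flat\le y'^\flat$. Since $(-)^\flat$ is a Coxeter-system isomorphism on $W_{\mathrm{aff}}(\Sigma)$, these two conditions coincide.

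\emph{Part (2).} Assume $\Lambda_{\{\mu\}}\subseteq W_{\mathrm{aff}}(\Sigma)\tau$.
\begin{itemize}
\item Elements of different $W_{\mathrm{aff}}(\Sigma)$-cosets are incomparable, so $\Adm(\{\mu\})\subseteq W_{\mathrm{aff}}(\Sigma)\tau$.
\item For $x\in W_{\mathrm{aff}}(\Sigma)\tau$ and $\lambda\in\Lambda_{\{\mu\}}$, part (1) gives $x\le t^\lambda$ if and only if $x^\flat\le t^{\lambda^\flat}$.
\item As $\lambda$ ranges over $\Lambda_{\{\mu\}}$, $\lambda^\flat$ ranges exactly over $\Lambda^\flat_{\{\mu\}}$, the image set.
\end{itemize}
Hence $x\in\Adm(\{\mu\})$ if and only if $x^\flat\in\Adm(\Lambda^\flat_{\{\mu\}})$, which is the preimage statement. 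The converse inclusion holds automatically, because any preimage point in $W_{\mathrm{aff}}(\Sigma)\tau$ satisfies the criterion above.

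\emph{Expected obstacle.} The only step requiring care is the first: checking that $N\subseteq\Omega$ and that $(-)^\flat$ preserves the affine Coxeter structure, including the $\Omega$-action. Everything else is formal.
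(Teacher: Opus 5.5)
Your proof is correct. The paper gives no argument of its own and only cites \cite[\S 8.2]{HH17}, and your reasoning is the standard argument behind that reference: torsion translations act trivially on $V$ and so lie in $\Omega$, hence $(-)^\flat$ is injective on $W_{\mathrm{aff}}(\Sigma)$ and preserves its simple reflections, while the Bruhat order on $W_{\mathrm{aff}}(\Sigma)\rtimes\Omega$ is defined coset by coset. Note also that you never need the centrality of $X_*(T)_{\Gamma_0,\mathrm{tors}}$ recorded in the paper; that the kernel is normal and contained in $\Omega$ is all the argument uses.
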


\subsection{Schubert varieties} Let $K$ be our subset of $\D_{\aff}$ as before, and denote for each $w \in \tW^K$ the associacted Schubert variety in the partial affine flag variety by $S_w$. In the mixed characteristic case or in the equal characteristic case with $p \nmid \vert\pi_1(G_{\text{der}}) \vert$, all these Schubert varieties are normal and Cohen-Macaulay. It is discovered in \cite[Theorem 1.1]{HLR} that in the equal characteristic case with $p \mid \vert \pi_1(G_{\text{der}}) \vert$, only finitely many Schubert varieties are normal, and any non-normal Schubert variety is not Cohen-Macaulay. In this case, one needs to pass to the seminormalization. Following the stacks project, the seminormalization $\tilde S_w$ is the initial scheme mapping universally homeomorphically to $S_w$ with the same residue field. The following result is due to Fakhruddin-Haines-Louren\c co-Richarz \cite[Theorem 4.1]{FHLR}. 

\begin{theorem}\label{thm:CMforS}
    The seminormalized Schubert variety $\tilde S_w$ is normal and Cohen-Macaulay. 
\end{theorem}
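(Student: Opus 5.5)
This statement is attributed to Fakhruddin--Haines--Louren\c{c}o--Richarz \cite[Theorem 4.1]{FHLR}; in the paper it is imported as a black box rather than proved. I sketch the standard route one would follow to establish it, splitting by characteristic. The plan is to reduce to the equal characteristic setting over $\mathbb{F}_p$ (or $\overline{\mathbb{F}}_p$) and use Frobenius splitting. In mixed characteristic, the Witt vector affine flag varieties of Zhu and Bhatt--Scholze are perfect schemes. Here one works with the canonical deperfection of Schubert varieties given by \cite{AGLR}. One then identifies its geometry with that of the equal characteristic seminormalized Schubert variety via the comparison of the special fibre of the Beilinson--Drinfeld type degeneration. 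So the core case is an equal characteristic Schubert variety $S_w$ in a partial affine flag variety for a (possibly wildly ramified) group $G$ over $k((t))$.

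The first key step handles the equal characteristic case. Replace $G$ by a $z$-extension, or pass to the simply connected cover $G_{\mathrm{sc}}$, whose affine flag variety has normal Schubert varieties by the results of Pappas--Rapoport, Faltings and Fakhruddin--Haines--Louren\c{c}o--Richarz. The natural map $\mathrm{Fl}_{G_{\mathrm{sc}}}\to \mathrm{Fl}_G$ restricts on each connected component to a universal homeomorphism onto its image, with the same residue fields. Hence the Schubert variety for $G_{\mathrm{sc}}$ is the seminormalization $\tilde S_w$. Then normality and Cohen--Macaulayness of $\tilde S_w$ reduce to the corresponding statements for Schubert varieties in the simply connected case. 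There one uses a Demazure resolution $\pi\colon D_{\underline w}\to S_w$ by a tower of $\mathbb{P}^1$-bundles and proves $\pi_\ast\mathcal{O}=\mathcal{O}$ together with the vanishing $R^i\pi_\ast\mathcal{O}=0$ for $i>0$ and $R^i\pi_\ast\omega_{D}=0$ for $i>0$. These follow from a Frobenius splitting of $D_{\underline w}$ compatibly splitting all boundary divisors, via the Mehta--Ramanathan/Kumar argument. Rational singularities in this Frobenius-split sense give Cohen--Macaulayness.

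The main obstacle is producing the Frobenius splitting of the Demazure variety for wildly ramified groups, and when $p$ divides $|\pi_1(G_{\mathrm{der}})|$. In those cases the usual construction via the canonical section of $\omega^{-1}$ and the tame loop group structure is unavailable. I would first try to construct parahoric group schemes over a two-dimensional base $k[[t,u]]$ degenerating the wild group to a tame (or split) one. One would then spread out the Demazure resolution over this base and deduce splittings and cohomology vanishing on the special fibre from the generic fibre by semicontinuity and flatness. Once the splitting is obtained, the remaining deductions are the standard ones. These are normality from $\pi_\ast\mathcal{O}=\mathcal{O}$, and Cohen--Macaulayness from the Kempf criterion applied to $\pi$.
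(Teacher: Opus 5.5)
The paper does not prove Theorem~\ref{thm:CMforS}. It quotes it from \cite[Theorem 4.1]{FHLR}, and notes that in the other cases the Schubert varieties are already normal and Cohen--Macaulay, so the content is really equicharacteristic. Your opening sentence therefore agrees with the paper, and your mixed-characteristic paragraph is not needed. Your equal-characteristic outline is the standard route and is fine as far as it goes: identify $\tilde S_w$ with the $G_{\mathrm{sc}}$-Schubert variety via a finite birational universal homeomorphism that is an isomorphism on Schubert cells, then use the Demazure resolution, a splitting compatible with the boundary, and Kempf's criterion.

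The genuine gap is the step you yourself call the main obstacle. You propose to obtain, on the special fibre of a family, both a Frobenius splitting and the vanishing of $R^i\pi_*\mathcal{O}_D$ and $R^i\pi_*\omega_D$, deducing them from the generic fibre ``by semicontinuity and flatness''. This runs the wrong way. Upper semicontinuity of $h^i$ spreads vanishing from the closed fibre to the generic fibre, never conversely. Frobenius splittings do not specialize either: a family of elliptic curves with ordinary generic fibre and supersingular special fibre is a counterexample. So whichever fibre of your degeneration over $k[[t,u]]$ the wild group occupies, the argument as stated does not produce the splitting. A degeneration could only help with the already-known case in the closed fibre, together with a base-change argument letting the splitting lift.

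Two further problems. First, at this stage you invoke \cite{FHLR} for normality of Schubert varieties of wildly ramified simply connected groups, which is exactly what you set out to supply, so the sketch is circular as an independent proof. Second, $p\mid|\pi_1(G_{\mathrm{der}})|$ is not an obstruction to splitting the Demazure variety, since it disappears once you pass to $G_{\mathrm{sc}}$. The non-normality found in \cite{HLR} is a failure of $\pi_*\mathcal{O}_D=\mathcal{O}_{S_w}$ for the Schubert variety of $G$, which is precisely what passing to the seminormalization repairs.
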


\subsection{Local models}
In this subsection, we consider the local models with parahoric level structure. Let $\bG$ be a connected reductive group over $F$ and $\{\mu\}$ be a (not necessarily minuscule) conjugacy class of geometric cocharacters defined over the reflex field $E$, a finite separable extension of $F$. 


A uniform definition of these local models is not available; their construction has developed through several key works. The foundational group-theoretic construction for tamely ramified groups was established by Pappas–Zhu \cite{PZ13}. This was later extended beyond the tamely ramified case by Levin \cite{Le16}, Lourenço \cite{Lo19+}, and Fakhruddin–Haines–Lourenço–Richarz \cite{FHLR}. In equal characteristic, a construction for arbitrary groups was given by Richarz \cite{Ri16}, while the mixed-characteristic case has been largely addressed by the aforementioned authors. We note that in mixed characteristic, constructions depend on the choice of a parahoric group scheme lifting. Finally for minuscule $\mu$, the work of Ansch\"utz-Gleason-Lourenço-Richarz \cite{AGLR} provides a unique projective flat weakly normal scheme representing Scholze-Weinstein's diamond local model \cite[Conjecture 21.4.1.]{SW20}.

For our purposes, we consider local models satisfying the following essential property (cf.\ \cite[Conjecture~2.13]{HPR20}):\\

\begin{minipage}{.05\textwidth}$(\ast)$\end{minipage}\begin{minipage}{.9\textwidth}
The local $\tilde M_{\{\mu\}}$ attached to $(\bG, K, \{\mu\})$ is a flat scheme, whose generic fibre is the Schubert variety $\tilde S$ in the $B_{dR}^+$-affine Grassmannian of $\bG$ corresponding to $\{\mu\}$, and the reduced special fibre is equal to $\cup_{w \in \Adm(\{\mu'\})^K} \tilde S_w'$.
\end{minipage}\\

Here $(\bG', K, \{\mu'\})$ is the equicharacteristic analogues of $(\bG, K, \{\mu\})$ (see \cite[\S 2]{FHLR}), $\breve I'$ is the standard Iwahori subgroup, $\breve P' = \breve I'W_K\breve I'$ the parahoric of $K$ and $\cup_{w \in \Adm(\{\mu'\})^K} \tilde S_w'$ is the $\mu'$-admissible locus in the equicharacteristic partial affine flag variety. The local models constructed in \cite{PZ13, Le16, Lo19+, FHLR, Ri16} as well as most local models constructed in \cite{AGLR} satisfy property $(\ast)$.

\begin{theorem}\label{thm:CM}
    Let $\tilde M_{\{\mu\}}$ be a local model for $(\bG, K, \{\mu\})$ satisfying the property $(\ast)$. Then $\tilde M_{\{\mu\}}$ is Cohen-Macaulay. 
\end{theorem}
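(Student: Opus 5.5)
Since $\tilde M_{\{\mu\}}$ is flat over the DVR $O_E$ with Cohen–Macaulay generic fibre (the Schubert variety $\tilde S$ in the $B_{dR}^+$-affine Grassmannian; its Cohen–Macaulayness is classical or follows from the same seminormalized Schubert results), it suffices to prove that the special fibre is Cohen–Macaulay. Indeed, if the special fibre of a flat finite-type scheme over a DVR is Cohen–Macaulay, then each local ring at a point of the special fibre is Cohen–Macaulay: the uniformizer is a nonzerodivisor, and the quotient by it is Cohen–Macaulay. The generic points are handled by the generic fibre. The first step is therefore to reduce to the special fibre. A subtlety is that $(\ast)$ only identifies the \emph{reduced} special fibre, so I would also need the special fibre to be reduced. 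I expect this to be part of the intended meaning of $(\ast)$, as in \cite{FHLR, AGLR, GL24}. Otherwise I would invoke the known reducedness results, or the coherence-type argument, that the local model literature establishes together with $(\ast)$.

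Next, I would pass from $\Adm(\{\mu'\})^K$ to the combinatorial setting of \S\ref{sec:1.4}. Using Lemma~\ref{lem:flat_adm}, the admissible set in $\widetilde W(G)$ lies in a single coset $W_{\mathrm{aff}}(\Sigma)\tau$ and is order-isomorphic to $\Adm(\Lambda^\flat_{\{\mu\}})^K$ in $\widetilde W(G)^\flat$. That set is an admissible set $\Adm(\mu)^K$ for a based root datum as in \S\ref{sec:1.3}, with $\Sigma$ possibly non-reduced, handled via the échelonnage root system whose affine Weyl group is $W_{\mathrm{aff}}(\Sigma)$. By Theorem~\ref{thm:main}, $\widehat{\Adm(\mu)^K}$ is dual EL-shellable of length $\langle\mu,2\rho\rangle+1$. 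Proposition~\ref{prop:dualELImpliesCM} then shows that $\Adm(\mu)^K$ is $N$-Cohen–Macaulay with $N=\langle\mu,2\rho\rangle$.

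The geometric step follows G\"ortz \cite{Go01}. I would prove by induction along Definition~\ref{def:CM} that for any subposet $Q\subseteq \tW^K$ that is a union of lower intervals and is $N$-Cohen–Macaulay, the reduced union $X_Q=\bigcup_{w\in Q}\tilde S_w$ is Cohen–Macaulay of pure dimension $N$. If $Q$ has a unique maximal element, this is Theorem~\ref{thm:CMforS}. Otherwise, with the maximal elements ordered as in Definition~\ref{def:CM}, let $X_{j}=\bigcup_{i\le j}\tilde S_{x_i}$. The induction hypothesis shows that $X_{j-1}$ and $\tilde S_{x_j}$ are Cohen–Macaulay of dimension $N$, and that their scheme-theoretic intersection, once identified with $\bigcup_{w\in[\hat0,x_j]\cap\bigcup_{i<j}[\hat0,x_i]}\tilde S_w$, is Cohen–Macaulay of dimension $N-1$. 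The short exact sequence
\[0\to\mathcal O_{X_j}\to\mathcal O_{X_{j-1}}\oplus\mathcal O_{\tilde S_{x_j}}\to\mathcal O_{X_{j-1}\cap\tilde S_{x_j}}\to0\]
and the depth lemma then give depth $\ge N$ for $X_j$ at every point.

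The main obstacle is the identification of scheme-theoretic intersections and unions of (seminormalized) Schubert varieties with the reduced unions indexed by the poset intersections. This requires that these unions are reduced and compatibly split, so that ideal sums are radical. I would use Frobenius splitting of the (seminormalized) partial affine flag variety compatibly splitting all Schubert varieties: in positive characteristic this gives reducedness of intersections and unions, and hence $I_{X_{j-1}}+I_{\tilde S_{x_j}}$ is the radical ideal of the union of $\tilde S_w$ over the intersection poset. In the seminormalized setting I would check that the seminormalizations glue, which is what makes $\cup\tilde S'_w$ in $(\ast)$ meaningful. Granting this, the induction goes through and the special fibre, hence $\tilde M_{\{\mu\}}$, is Cohen–Macaulay.
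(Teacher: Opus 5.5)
Your proposal is correct and follows the same route as the paper: $N$-Cohen--Macaulayness of $\Adm(\mu)^K$ from Theorem~\ref{thm:main} and Proposition~\ref{prop:dualELImpliesCM}, then Cohen--Macaulayness of the union of seminormalized Schubert varieties by G\"ortz's inductive gluing, then passage to the whole flat local model using the Cohen--Macaulay generic fibre. The only difference is that you write out the depth-lemma induction and the nonzerodivisor argument, which the paper takes from \cite[Proposition~4.24]{Go01} and \cite[Lemma~5.7]{HR23}, and you make explicit the inputs those citations tacitly need: reducedness of the special fibre, and reducedness of scheme-theoretic intersections via compatible Frobenius splitting.
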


\begin{proof}
We consider the seminormalized Schubert variety $\tilde S'_w$ in the partial affine flag variety of $\bG'$. By Theorem \ref{thm:CMforS}, for any $w$, $\tilde S'_w$ is Cohen-Macaulay. Let $X=\cup_{w \in \Adm(\{\mu'\})} \tilde S'_{w}$. By Theorem~\ref{thm:main} and Proposition~\ref{prop:dualELImpliesCM}, $\Adm(\{\mu'\})^K$ is $N$-Cohen-Macaulay. Hence, by \cite[Proposition 4.24]{Go01}, $X$ is Cohen-Macaulay.  

Note that the generic fibre of $\tilde M_{\{\mu\}}$ is the seminormalization of a single Schubert variety, and hence is Cohen-Macaulay by Theorem \ref{thm:CMforS}. By \cite[Lemma 5.7]{HR23}, the whole local model $\tilde M_{\{\mu\}}$ is Cohen-Macaulay. 
\end{proof}
A similar proof yields the following result.
\begin{proposition}\label{prop:SWisCM}
    The local models characterized by Scholze-Weinstein \cite{SW20} and constructed by Anschütz-Gleason-Lourenço-Richarz \cite{AGLR} are Cohen-Macaulay.
\end{proposition}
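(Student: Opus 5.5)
The plan is to follow the proof of Theorem~\ref{thm:CM} as closely as possible, substituting the AGLR local model for a model satisfying $(\ast)$ and verifying separately each input that proof used. Let $\mathcal M_{\{\mu\}}$ be the local model of \cite{AGLR} attached to $(\bG,K,\{\mu\})$ with $\{\mu\}$ minuscule. It is projective, flat and weakly normal over $O_E$, and its generic fibre is the (minuscule, hence smooth) Schubert variety in the $B_{dR}^+$-affine Grassmannian. By flatness together with \cite[Lemma 5.7]{HR23}, it is enough to show that the reduced special fibre is Cohen--Macaulay and that the special fibre is reduced. The generic fibre is smooth, so it is automatically Cohen--Macaulay.

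For the special fibre I will use the results of \cite{AGLR} and \cite{GL24}. They identify the reduced special fibre, as a closed subscheme of the Witt vector partial affine flag variety, with the union $\bigcup_{w\in\Adm(\{\mu\})^K}\tilde S_w$ of seminormalized Schubert varieties. They also show that this special fibre is reduced; that reducedness is established in all cases is one of the facts recalled in the introduction. In the Witt vector setting the Schubert varieties are perfect, so Cohen--Macaulayness has to be read through the weakly normal model. The cleanest route is the comparison in \cite{AGLR}: the special fibre of $\mathcal M_{\{\mu\}}$ is identified with the canonical deperfection given by the equicharacteristic admissible locus $\bigcup_{w}\tilde S'_w$ for the equicharacteristic analogue $(\bG',K,\{\mu'\})$. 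The admissible sets agree under this identification by Lemma~\ref{lem:flat_adm}.

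Once this identification is in place, the argument is the same as before. Theorem~\ref{thm:CMforS} makes every $\tilde S'_w$ Cohen--Macaulay. Theorem~\ref{thm:main} combined with Proposition~\ref{prop:dualELImpliesCM} shows that $\Adm(\{\mu'\})^K$ is $N$-Cohen--Macaulay. Then \cite[Proposition 4.24]{Go01} gives Cohen--Macaulayness of the union. Görtz's proposition requires that scheme-theoretic intersections of unions of Schubert varieties are again reduced unions of Schubert varieties. I will take this from the Frobenius-splitting and compatibility results already used in the equicharacteristic setting, which hold for seminormalized Schubert varieties.

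The main obstacle is the identification step. \cite{AGLR} proves it in the cases where the special fibre agrees with the equicharacteristic admissible locus, but the remaining cases are exactly the small-characteristic exceptional root systems that the paper says it handles. For those cases I would argue directly: work in the perfect setting, apply Görtz's criterion to the perfected union, and then descend to the weakly normal model using \cite{GL24}, which identifies the weakly normal special fibre with a union of seminormalized Witt vector Schubert varieties. The Witt vector analogue of Theorem~\ref{thm:CMforS} would come from the same argument as in \cite{FHLR}, since that argument only uses the admissible-set combinatorics together with Cohen--Macaulayness of the individual Schubert varieties.
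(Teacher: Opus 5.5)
For the cases in which \cite{AGLR} identify the special fibre with the equicharacteristic admissible locus, your argument is the same as the paper's: Theorem~\ref{thm:main}, Proposition~\ref{prop:dualELImpliesCM}, \cite[Proposition 4.24]{Go01}, then \cite[Lemma 5.7]{HR23}. The gap is in the remaining small-characteristic cases, which are exactly where the proposition is new. The fallback you sketch for them does not work.
\begin{itemize}
\item Cohen--Macaulayness cannot be proved at the perfect level and then descended. The perfection of a positive-dimensional variety is not Noetherian, so G\"ortz's depth argument has no content for a ``perfected union''.
\item More fundamentally, perfection inverts universal homeomorphisms. So it cannot tell a non-normal, non-Cohen--Macaulay Schubert variety from \cite{HLR} apart from its Cohen--Macaulay seminormalization, and there is nothing to descend.
\item Witt vector Schubert varieties are perfect, hence already seminormal. ``Seminormalizing'' them therefore does not produce a Noetherian model to which anything like Theorem~\ref{thm:CMforS} could apply.
\item Your justification for a Witt vector version of Theorem~\ref{thm:CMforS} is circular. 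That theorem \emph{is} the statement about individual Schubert varieties, and its proof in \cite{FHLR} is geometric (Frobenius splitting), not a consequence of admissible-set combinatorics.
\end{itemize}

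The missing ingredient is the result of Cass--Louren\c{c}o on which the paper's proof rests. By \cite[Corollary 1.5, Theorem 4.15, Corollary 4.16]{CL25}, in \emph{all} cases the special fibre of the \cite{AGLR} local model is the canonical deperfection $A_{\mathcal G,\mu,1}$ of the admissible locus. This scheme is the union of the deperfected Schubert varieties attached to the translation elements $t^{a(\mu)}$. These are integral Cohen--Macaulay schemes whose closure relations are given by the Bruhat order. Together with flatness and reducedness from \cite[Theorem 1.2]{AGLR}, this is exactly the input needed to run the proof of Theorem~\ref{thm:CM} uniformly, with no comparison to equal characteristic. A minor point: Lemma~\ref{lem:flat_adm} is about passing to $\widetilde W(G)^\flat$, not about matching mixed- and equal-characteristic admissible sets.
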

\begin{proof}
    Flatness is shown in \cite[Theorem~1.2]{AGLR}, which also gives a description of the generic fibre and implies reducedness of the special fibre. The special fibre is described in all cases by Cass-Lourenço \cite[Corollary~1.5]{CL25} as the canonical deperfection of the admissible locus inside the partial flag variety of $G$. They show in \cite[Theorem~4.15]{CL25} that the scheme $A_{\mathcal G,\mu,1}$, which is identified with the special fibre of the local model in \cite[Corollary~4.16]{CL25}, is the union of deperfected Schubert varieties $\mathcal{F\ell}_{\mathcal G,\leq t^{a(\mu)},1}$. These deperfected Schubert varieties are integral Cohen-Macaulay schemes with closure relations given by the Bruhat order. Now we can argue as in the proof of Theorem~\ref{thm:CM}.
\end{proof}
We are able to reprove the following known result.
\begin{corollary}\label{cor:KPZisCM}
    The integral models of Shimura varieties constructed by Kisin-Pappas-Zhou \cite{KPZ24} are Cohen-Macaulay.
\end{corollary}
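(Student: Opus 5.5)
The plan is to deduce the corollary from Proposition~\ref{prop:SWisCM} through the local model diagram, rather than arguing directly on the Shimura side. Cohen--Macaulayness is étale-local and is preserved and detected along smooth morphisms. So it is enough to show that each closed point of the special fibre of the Kisin--Pappas--Zhou integral model $\mathscr S_K$ has an étale neighbourhood that is smooth over a Cohen--Macaulay scheme.

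First I would recall the main structural result of \cite{KPZ24}. For Shimura data of abelian type with parahoric level $K_p$, with the standing hypotheses of that paper (in particular $p>2$, or the other assumptions imposed there), the integral model $\mathscr S_K$ sits in a local model diagram
\begin{align*}
\mathscr S_K \longleftarrow \widetilde{\mathscr S}_K \longrightarrow \mathbb M^{\mathrm{loc}}_{\mathcal G,\mu}.
\end{align*}
Here the left arrow is a $\mathcal G$-torsor, so it is smooth and surjective. The right arrow is smooth and $\mathcal G$-equivariant. Equivalently, $\mathscr S_K$ is étale-locally isomorphic to $\mathbb M^{\mathrm{loc}}_{\mathcal G,\mu}$, via a smooth morphism $\mathscr S_K\to[\mathcal G\backslash \mathbb M^{\mathrm{loc}}_{\mathcal G,\mu}]$. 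Second, I would identify $\mathbb M^{\mathrm{loc}}_{\mathcal G,\mu}$ with the scheme-theoretic local model of Scholze--Weinstein, as constructed by Anschütz--Gleason--Lourenço--Richarz \cite{AGLR}. In \cite{KPZ24} the local models are taken to be exactly these, or are shown to agree with them under the running hypotheses. Proposition~\ref{prop:SWisCM} then says that $\mathbb M^{\mathrm{loc}}_{\mathcal G,\mu}$ is Cohen--Macaulay. Alternatively, one can check that these local models satisfy $(\ast)$ and apply Theorem~\ref{thm:CM}.

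Third comes a purely commutative-algebra step. Let $f\colon Y\to X$ be smooth. If $X$ is Cohen--Macaulay, then $Y$ is Cohen--Macaulay, since a flat local homomorphism with Cohen--Macaulay fibres and Cohen--Macaulay base has Cohen--Macaulay source. Conversely, if $f$ is moreover surjective and $Y$ is Cohen--Macaulay, then $X$ is Cohen--Macaulay, by descent of Cohen--Macaulayness along faithfully flat local maps. Applying the first direction to $\widetilde{\mathscr S}_K\to\mathbb M^{\mathrm{loc}}$ shows $\widetilde{\mathscr S}_K$ is Cohen--Macaulay. Applying the second direction to the torsor $\widetilde{\mathscr S}_K\to\mathscr S_K$ shows $\mathscr S_K$ is Cohen--Macaulay.

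The main obstacle is the second step: making sure the local model appearing in the diagram of \cite{KPZ24} really is covered by Proposition~\ref{prop:SWisCM} or by property $(\ast)$. This matters in the cases where KPZ pass through auxiliary Hodge-type data or through $z$-extensions. There I would check that the local model attached to the auxiliary group agrees with the AGLR local model for $(\mathcal G,\mu)$, up to a smooth factor. I would also check that the admissible locus describing its special fibre is still a union of seminormalized Schubert varieties indexed by $\Adm(\mu)^K$, so that Theorem~\ref{thm:main} and Proposition~\ref{prop:dualELImpliesCM} still apply.
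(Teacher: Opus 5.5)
Your proposal is correct and follows essentially the paper's route: the paper's one-line proof cites \cite[Theorem~1.1.2]{KPZ24} for the étale-local isomorphism between the Kisin--Pappas--Zhou integral models and the Scholze--Weinstein/AGLR local models, and then applies Proposition~\ref{prop:SWisCM}. Your smooth-descent argument along the local model diagram just spells out why Cohen--Macaulayness transfers, and your concern about auxiliary Hodge-type data or $z$-extensions is already settled by that theorem of \cite{KPZ24}.
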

\begin{proof}
    Indeed, by \cite[Theorem~1.1.2]{KPZ24}, their integral models are \'etale-locally isomorphic to the local models considered in Proposition~\ref{prop:SWisCM}.
\end{proof}

Combining Theorem \ref{thm:main} with \cite[Lemma 4.22]{Go01}, we have the following result. 

\begin{theorem}\label{thm:CMConstructive}
    Let $C\subseteq W^{J,K}$ be a subset such that for all $a\in C$ and all $b\in W^{J,K}$ with $a>b$, we have $b\in C$. In other words, $C$ is the intersection of $W^{J,K}$ with any union of lower Bruhat intervals $[1,a], a\in W_0$. Then the subvariety $\cup_{v \in C} \tilde S'_{t^{v(\mu)}}$ of the partial affine flag variety is Cohen-Macaulay.
\end{theorem}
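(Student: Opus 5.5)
Theorem~\ref{thm:CMConstructive} follows the proof of Theorem~\ref{thm:CM}, with the full admissible set replaced by the order ideal generated by the components indexed by $C$. Put
\begin{align*}
    P_C := \{w\in \tW^K\mid \exists a\in C:~ w\leq t^{a(\mu)}\},
\end{align*}
and let $X_C := \bigcup_{v\in C}\tilde S'_{t^{v(\mu)}} = \bigcup_{w\in P_C}\tilde S'_w$. The plan is to show that $P_C$ is $N$-Cohen-Macaulay with $N = \langle \mu,2\rho\rangle - \ell(\tau)$, and then to pass to geometry via G\"ortz's criterion.

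First I would check that $C$ satisfies the hypothesis of the second part of Theorem~\ref{thm:main}. We need: if $a\in C$, $b\in W^{J,K}$ and $\eta_a\succ\eta_b$, then $b\in C$. This is not immediate, because the total order on $\{\eta_a\}$ only refines the Bruhat order. A Bruhat-downward-closed $C$ need not be an initial segment of an arbitrary refinement. However, the refinement was a free choice in \S\ref{sec:2.3}. For the given $C$, I would choose the linear extension that lists all elements of $C$ first (each part in some Bruhat-compatible order) and then the rest of $W^{J,K}$. This is still a linear extension of the Bruhat order, precisely because $C$ is downward closed. With this choice $C$ is an initial segment of the $\eta$-order, and Theorem~\ref{thm:main} shows that $P_C\sqcup\{\hat 1\}$ is dual EL-shellable.

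Next, $P_C\sqcup\{\hat 1\}$ is graded: it has the unique minimal element $\tau$, and all its maximal elements $t^{a(\mu)}$ have length $\langle\mu,2\rho\rangle$. By \cite[Theorem 2.5.5]{BB} each interval $[\tau,t^{a(\mu)}]^K$ is graded, so the whole poset is pure. Proposition~\ref{prop:dualELImpliesCM} then gives that $P_C$ is $N$-Cohen-Macaulay.

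Finally, I would invoke \cite[Lemma 4.22 / Proposition 4.24]{Go01}. Its inputs are three facts. Each $\tilde S'_w$ is Cohen-Macaulay, by Theorem~\ref{thm:CMforS}. Closure relations among the $\tilde S'_w$ are given by the Bruhat order on $\tW^K$. Scheme-theoretic intersections of unions of Schubert varieties are again reduced unions of Schubert varieties indexed by the intersection of the corresponding order ideals. From these it deduces that a union indexed by an $N$-Cohen-Macaulay order ideal is Cohen-Macaulay. The induction runs along the shelling order of the coatoms $t^{a(\mu)}$, $a\in C$. At each step, $[\hat 0,x_j]\cap\bigcup_{i<j}[\hat0,x_i]$ corresponds to the intersection of the new component with the union of the earlier ones, which is $(N-1)$-Cohen-Macaulay. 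The standard depth lemma for $Y\cup Z$ (with $Y,Z$ Cohen-Macaulay of dimension $N$ and $Y\cap Z$ Cohen-Macaulay of dimension $N-1$) then propagates Cohen-Macaulayness.

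I expect the main obstacle to be this geometric input in the seminormalized setting. One must know that $\tilde S'_v\cap\tilde S'_w$, taken inside the seminormalized union (or the relevant deperfection), is reduced and equal to the union of the $\tilde S'_u$ for $u$ below both. I would take this from the reducedness results used in the proof of Theorem~\ref{thm:CM} (Frobenius splitting in equal characteristic, or \cite{CL25} for the deperfected case), which is exactly what that proof already needs.
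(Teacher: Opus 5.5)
Your proposal is correct and follows the paper's route. You apply the $C$-version of Theorem~\ref{thm:main}, then Proposition~\ref{prop:dualELImpliesCM}, then G\"ortz's criterion \cite[Lemma 4.22]{Go01} with Theorem~\ref{thm:CMforS}; your observation that a Bruhat-downward-closed $C$ becomes an initial segment of the $\eta$-order once the freely chosen linear extension in \S\ref{sec:2.3} is adapted to $C$ is valid, and it bridges the mismatch between this hypothesis and the one in Theorem~\ref{thm:main}, which the paper leaves implicit.
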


\subsection{Further questions} 

Let $\s$ be a length-preserving automorphism of $\tW$. Then every element $w\in \tW$ can be written in the form
\begin{align*}
    w = \tau s_1\cdots s_{\ell(w)}
\end{align*}
for $\tau\in \tW$ of length zero and simple reflections $s_1,\dotsc,s_{\ell(w)}\in \tilde{\BS}$. Note that conjugation by $\tau$ induces an automorphism $\Ad(\tau) : \tilde{\BS}\to \tilde{\BS}$. We say that $w$ is a \emph{partial $\sigma$-Coxeter element} if the $s_i$ lie in pairwise distinct $\sigma\circ \Ad(\tau)$-orbits. We call it a \emph{spherical $\sigma$-Coxeter element} if moreover there is at least one $\sigma\circ\Ad(\tau)$-orbit of $\tilde{\BS}$ not containing any $s_i$. These two notions are independent of the choice of reduced word for $w$. Set
\begin{align*}
    \Cox(\mu)^K := \{w\in \Adm(\mu)^K\mid w\text{ is a spherical $\sigma$-Coxeter element}\}.
\end{align*}
For the \emph{basic loci of Coxeter type} in the sense of Görtz-He-Nie \cite{GHN3}, this is the indexing set of the EKOR stratification of said basic locus. This is a group theoretic model of the basic locus of the special fibre of certain very nicely behaved Shimura varieties. We formulate the following conjecture based on empirical observations.

\begin{conjecture} For any tuple $(\tW,\s,\mu,K)$, the poset $\widehat{\Cox(\mu)^K} = \Cox(\mu)^K \sqcup \{\hat{1}\}$ is dual EL-shellable.\footnote{The partial order in $\Cox(\mu)^K$ is the refined order $\le_{K,\s}$. However, by \cite[Proposition~2.8]{GHN3}, in the Coxeter type case, this refined order is equivalent to the usual Bruhat order.}
\end{conjecture}
In the Coxeter type case $(A_4, \s = \id, \mu = \omega_1^{\vee}+\omega_{4}^{\vee} , K=\D_0)$, the set $\Cox(\mu)^K$ is depicted below. It is easy to construct the desired labeling on this set. 

This conjecture would imply that Cohen-Macaulayness of a basic locus of Coxeter type may simple be checked for each EKOR stratum individually.\\

\centering
\begin{tikzpicture}[yscale=0.8,xscale=1.2]
    \node (empty) at (0,0) {$1$};
    \node (0) at (0,1) {$s_0$};
    \node (04) at (-1,2) {$s_{0}s_{4}$};
    \node (01) at (1,2) {$s_{0}s_{1}$};
    \node (043) at (-2 , 3) {$s_0s_4s_3$};
    \node (041) at (0, 3) {$s_0s_4s_1$};
    \node (012) at (2,3) {$s_0s_1s_2$};
    \node (0432) at (-3,4) {$s_0s_4s_3s_2$};
    \node (0431) at (-1,4) {$s_0s_4s_3s_1$};
    \node (0412) at (1,4) {$s_0s_4s_1s_2$};
    \node (0123) at (3,4) {$s_0s_1s_2s_3$};

    \draw[-]  (empty) -- (0);
    \draw[-]  (0) -- (04);
    \draw[-]  (0) -- (01);
    \draw[-]  (04) -- (043);
    \draw[-]  (04) -- (041);
    \draw[-]  (01) -- (041);
    \draw[-]  (01) -- (012);
    \draw[-]  (043) -- (0432);
    \draw[-]  (043) -- (0431);
    \draw[-]  (041) -- (0431);
    \draw[-]  (041) -- (0412);
    \draw[-]  (012) -- (0412);
    \draw[-]  (012) -- (0123);

\end{tikzpicture}

\printbibliography
\end{document}